\numberwithin{equation}{section}
\newtheorem{theorem}{Theorem}[section]
\newtheorem{corollary}[theorem]{Corollary}
\newtheorem{lemma}[theorem]{Lemma}
\newtheorem{proposition}[theorem]{Proposition}
\theoremstyle{definition}
\newtheorem{definition}[theorem]{Definition}
\newtheorem{example}[theorem]{Example}
\newcommand*\bigcdot{\mathpalette\bigcdot@{.5}}
\newcommand*\bigcdot@[2]{\mathbin{\vcenter{\hbox{\scalebox{#2}{$\m@th#1\bullet$}}}}}
\newlist{steps}{enumerate}{1}
\setlist[steps, 1]{label = Step \arabic*.}
\title{\vspace{-6cm}The Holomorphic Extension Property for Higher Du Bois Singularities}
\author[B.Tighe]{Benjamin Tighe}
\address{Department of Mathematics, University of Oregon}
\email{bentighe@uoregon.edu}
\begin{document}

\maketitle 

\begin{abstract}
Let $X$ be a normal complex algebraic variety.  We show that the holomorphic extension property holds in degree $p < \mathrm{codim}_X(X_{\mathrm{sing}})$ when $X$ has Du Bois singularities, giving an improvement on Flenner's criterion for arbitrary singularities.  As an application, we study the $m$-Du Bois definition from the perspective of holomorphic extension and compare how different restrictions on $\mathscr H^0(\underline \Omega_X^p)$ affect the singularities of $X$, where $\underline\Omega_X^p$ is the $p^{th}$-graded piece of the Du Bois complex.
\end{abstract}

\section{Introduction}

\subsection{Holomorphic and Logarithmic Extension} Let $X$ be a normal complex variety with regular locus $U$ with inclusion morphism $j:U \hookrightarrow X$.  We say that the \textit{holomorphic extension property} holds in degree $p$ if the natural inclusion \begin{equation} \label{equation holomorphic extension}
    \pi_*\Omega_{\widetilde X}^p \hookrightarrow \Omega_X^{[p]} : = j_*\Omega_U^p
\end{equation} is an isomorphism for some --- and therefore any --- resolution of singularities $\pi: \widetilde X \to X$.

The holomorphic extension property has been extensively studied for the classes of singularities arising in the minimal model program.  For klt and rational singularities, the holomorphic extension property holds for every $p$, see \cite[Theorem 1.4]{greb2011differential} and \cite[Corollary 1.8]{kebekus2021extending}.  Rational singularities consequently satisfy many important properties:

\begin{itemize}
    \item Functorial pullback for the sheaves of reflexive differentials $\Omega_X^{[p]}$ \cite[Theorem 1.3]{kebekuspullback}, \cite[Theorem 1.11]{kebekus2021extending}.

    \item The sheaf $\Omega_{X,h}^p$ of $h$-differential $p$-forms agrees with $\Omega_X^{[p]}$ \cite[Thm. 1]{huber14}, \cite[Corollary 1.12]{kebekus2021extending}.

    \item The Zariski-Lipman conjecture holds for rational singularities: given a normal complex analytic variety with rational singularities and locally free tangent sheaf, then $X$ is smooth \cite[\S 6]{greb2011differential}, \cite[Theorem 1.14]{kebekus2021extending}.
\end{itemize}

Holomorphic extension is a weak condition for small $p$: if $\Sigma$ is the singular locus of $X$, then (\ref{equation holomorphic extension}) is an isomorphism for every $0 \le p < \mathrm{codim}_X(\Sigma) - 1$ by Flenner's criterion \cite[Theorem]{flenner88extendability}.  Moreover, if holomorphic extension holds in degree $p$, then holomorphic extension holds in degree $k$ for every $0 \le k \le p$ \cite[Theorem 1.4]{kebekus2021extending}.  For $p\ge \mathrm{codim}_X(\Sigma) - 1$, extension is less topological and (\ref{equation holomorphic extension}) can be strict: by definition, Gorenstein log-canonical singularities fail holomorphic extension in degree $\dim X$.  Instead, it is better to consider a variant of the inclusion (\ref{equation holomorphic extension}) which allows for logarithmic poles.  We say that a normal variety $X$ satisfies the \textit{logarithmic extension property} in degree $p$ if the inclusion \begin{equation}\label{equation logarithmic extension}
    \pi_*\Omega_{\widetilde X}^p(\log E) \hookrightarrow \Omega_X^{[p]}
\end{equation} is an isomorphism for some --- and therefore any --- log-resolution of singularities $\pi:\widetilde X \to X$ with log-exceptional divisor $E = \pi^{-1}(\Sigma)$.  Log-canonical singularities satisfy the logarithmic extension property for all $0 \le p \le \dim X$ \cite[Theorem 1.5]{greb2011differential}.  In general, logarithmic extension in degree $p$ implies logarithmic extension in degree $k$ for every $0 \le k \le p$ \cite[Theorem 1.5]{kebekus2021extending}.

\subsection{Extension for Du Bois Singularities} Let $X$ be a complex algebraic variety.  We say that $X$ has \textit{Du Bois singularities} if the natural morphism \begin{equation}\label{equation Du Bois definition intro}
    \mathscr O_X \to \underline \Omega_X^0
\end{equation} is a quasi-isomorphism, where $\underline \Omega_X^0 : = \mathrm{gr}^F_0\underline \Omega_X^\bullet$ is the $0^{th}$-graded piece of the \textit{Du Bois complex} $(\underline \Omega_X^\bullet, F)$, an object in the category of filtered complexes of $\mathscr O_X$-modules generalizing the holomorphic de Rham complex of smooth algebraic varieties.  

Du Bois singularities are an important class of singularities in the Hodge theory of singularities, as they arise in the construction of Deligne's mixed Hodge structure \cite{deligne1974theorie} on the cohomology of algebraic varieties.  They also play a fundamental role in the minimal model program, as log-canonical and rational singularities are Du Bois, see \cite[Theorem 1.4]{kollar2010log} \cite[Corollary 2.6]{kovacs1999rational}, \cite[5.4. Theorem]{saito2000mixed}. We consider then the following questions:

\begin{enumerate}[label=\normalfont(\roman*)]
    \item \label{question Du Bois holomorphic} If $X$ has Du Bois singularities, for which $p$ is the inclusion morphism (\ref{equation holomorphic extension}) an isomorphism?

    \item \label{question Du Bois logarithmic} If $X$ has Du Bois singularities, is the inclusion morphism (\ref{equation logarithmic extension}) an isomorphism for all $p$?
\end{enumerate}

If $X$ is Du Bois and Cohen-Macaulay, the answer to Question \ref{question Du Bois logarithmic} is already known: if $\pi:\widetilde X \to X$ is a log-resolution of singularities with exceptional divisor $E$, then the sheaf $\pi_*\omega_{\widetilde X}(E)$ is reflexive by \cite[Theorem 1.1]{kovacs2010canonical}\footnote{In fact, a Cohen-Macaulay variety is Du Bois if and only if $\pi_*\omega_{\widetilde X}(E)$ is reflexive, giving a natural generalization of Kempf's criterion for rational singularities.}, and so $\pi_*\Omega_{\widetilde X}^p(\log E)$ is reflexive for all $0 \le p \le \dim X$ \cite[Theorem 1.5]{kebekus2021extending}.  This gives a short proof of a result of Kov\'acs-Graf.

\begin{theorem}{\cite[Theorem 4.1]{graf2014potentially}} \label{theorem logarithmic extension intro}
If $X$ is a normal complex algebraic variety with at worst Du Bois singularities, then logarithmic extension holds in all degrees $0 \le p \le \dim X$: for any logarithmic resolution of singularities $\pi:\widetilde X \to X$, the inclusion $\pi_*\Omega_{\widetilde X}^p(\log E) \hookrightarrow \Omega_X^{[p]}$ is an isomorphism.  
\end{theorem}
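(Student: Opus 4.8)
The plan is to reduce the assertion to the single top-degree case $p = n := \dim X$ and then import the Kov\'acs--Schwede--Smith theorem on the canonical sheaf of a Du Bois variety, afterwards propagating the conclusion down to all $p$ via \cite[Theorem 1.5]{kebekus2021extending}. Since logarithmic extension in a fixed degree is independent of the chosen log-resolution, it suffices to verify the property for one conveniently chosen $\pi$; so I would take a log-resolution that is an isomorphism over the smooth locus $U = X \setminus \Sigma$, whence $E = \pi^{-1}(\Sigma)_{\mathrm{red}} = \mathrm{Exc}(\pi)$ is an SNC divisor disjoint from $\pi^{-1}(U)$. For such a $\pi$ and each $p$, $\pi_*\Omega_{\widetilde X}^p(\log E)$ is a torsion-free coherent sheaf whose restriction to $U$ is $\Omega_U^p$, so it embeds naturally into $\Omega_X^{[p]} = j_*\Omega_U^p$; the theorem asserts that this embedding is an isomorphism.

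First I would treat $p = n$. Here $\Omega_{\widetilde X}^n(\log E) = \omega_{\widetilde X}(E)$ because $\widetilde X$ is smooth, and on the other side normality of $X$ gives $\Omega_X^{[n]} = j_*\omega_U = \mathscr H^{-n}(\omega_X^\bullet) =: \omega_X$, the canonical sheaf cut out by the Grothendieck dualizing complex, which is reflexive since it is $S_2$ and $X$ is normal. The key non-formal input is now \cite[Theorem 1.1]{kovacs2010canonical}: since $X$ has Du Bois singularities, the natural trace morphism $\pi_*\omega_{\widetilde X}(E) \to \omega_X$ is an isomorphism. This is precisely the Cohen--Macaulay-free form of the statement recorded in the footnote above; note that without a Cohen--Macaulay hypothesis $\omega_X$ need not be the dualizing sheaf, but it is still $S_2$, and that is all the argument uses. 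Hence $\pi_*\Omega_{\widetilde X}^n(\log E) = \Omega_X^{[n]}$: logarithmic extension holds in degree $n$, equivalently $\pi_*\omega_{\widetilde X}(E)$ is reflexive.

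Finally I would descend to lower degrees. By \cite[Theorem 1.5]{kebekus2021extending}, logarithmic extension in a given degree implies logarithmic extension in every smaller degree, so the degree-$n$ case just established yields the property for all $0 \le p \le n$ (and, by the same reference, for every log-resolution, which is the form in which the theorem is stated). I expect essentially all the difficulty to sit in the one cited ingredient, \cite[Theorem 1.1]{kovacs2010canonical}, and in its correct non-Cohen--Macaulay formulation; the surrounding steps — the identification $\Omega_X^{[n]} \cong \omega_X$ for normal $X$, and the exterior-power/reflexivity descent of \cite{kebekus2021extending} — are formal. A more self-contained route would be to unwind the defining exact triangle of the Du Bois complex $\underline\Omega_X^\bullet$ and dualize it against $\omega_X^\bullet$ directly, but this effectively reproves \cite[Theorem 1.1]{kovacs2010canonical}, so I would not take it here.
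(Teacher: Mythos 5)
Your architecture is exactly the paper's: reduce to the single case $p = n$, show that $\pi_*\omega_{\widetilde X}(E) \hookrightarrow j_*\omega_U$ is an isomorphism, and descend to all lower degrees via \cite[Theorem 1.5]{kebekus2021extending}. The gap sits precisely in the one step you outsource. \cite[Theorem 1.1]{kovacs2010canonical} is stated and proved for \emph{Cohen--Macaulay} varieties, and the paper is explicit that removing that hypothesis is the entire content of this section; your parenthetical --- that $\omega_X$ is still $S_2$ without Cohen--Macaulayness ``and that is all the argument uses'' --- is an assertion, not an argument. The place where the hypothesis actually matters in the Kov\'acs--Schwede--Smith proof is not the reflexivity of $\omega_X$ but the control of the Grothendieck dual of the triangle
$$\mathbf R\pi_*\mathscr O_{\widetilde X}(-E) \to \underline\Omega_X^0 \to \underline\Omega_\Sigma^0 \xrightarrow{+1}.$$
Dualizing and applying Grothendieck duality identifies $\mathbf R\pi_*\omega_{\widetilde X}(E)[n]$ as the third vertex, and to extract the isomorphism $\pi_*\omega_{\widetilde X}(E) \cong \mathscr H^{-n}\mathbb D_X(\underline\Omega_X^0)$ --- which for Du Bois $X$ equals $\mathscr H^{-n}\omega_X^\bullet = \omega_X = j_*\omega_U$ --- one needs the vanishing $\mathscr H^{j}\mathbb D_X(\underline\Omega_\Sigma^0) = 0$ for $j < -\dim\Sigma$. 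Here $\Sigma$ is the singular locus, an arbitrary reduced scheme of dimension $\le n-2$ about which no Cohen--Macaulay or normality assumption is available, so this vanishing is not formal. The paper's Lemma 4.1 supplies exactly this, via the spectral sequence $E_2^{p,q} = R^p\mathscr Hom_{\mathscr O_Y}(\mathscr H^{-q}\underline\Omega_\Sigma^0, \omega_Y^\bullet)$ combined with the support bound $\dim\mathrm{Supp}\,\mathscr H^{j}\underline\Omega_\Sigma^0 \le \dim\Sigma - j$ of Theorem \ref{theorem Du Bois properties}\ref{theorem property dim support Du Bois}.

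Your closing remark --- that a self-contained route would dualize the defining triangle of $\underline\Omega_X^0$ against $\omega_X^\bullet$ but would ``effectively reprove'' the cited theorem --- describes precisely the argument the paper gives, and it is not a reproof of the Cohen--Macaulay statement but the genuinely required extension of it. The proof is therefore repairable by carrying out the step you declined to take; as written, it rests on a citation whose hypotheses your $X$ need not satisfy.
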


Theorem \ref{theorem logarithmic extension intro} is key to understanding for which $p$ holomorphic extension holds.  Our main result is an extension of Flenner's criterion to Du Bois singularities and an optimal answer to Question \ref{question Du Bois holomorphic}.

\begin{theorem} \label{theorem holomorphic extension intro}
If $X$ is a normal complex algebraic variety with at worst Du Bois singularities and singular locus $\Sigma$, then holomorphic extension holds in degree $0 \le p < \mathrm{codim}_X(\Sigma)$: if $\pi:\widetilde X \to X$ is any resolution of singularities, the inclusion morphism $\pi_*\Omega_{\widetilde X}^p \hookrightarrow \Omega_X^{[p]}$ is an isomorphism.  
\end{theorem}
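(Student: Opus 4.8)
The plan is to reduce the statement to the single ``new'' form degree $p = c-1$, where $c := \mathrm{codim}_X(\Sigma)$; there the inclusion (\ref{equation holomorphic extension}) is not covered by Flenner's criterion, and exactly one extra degree of extension must be extracted from the Du Bois hypothesis. First I would fix a log-resolution $\pi:\widetilde X \to X$ with simple normal crossings exceptional divisor $E = \pi^{-1}(\Sigma)$, which is harmless since $\pi_*\Omega_{\widetilde X}^p$ is independent of the chosen resolution. By Flenner's criterion \cite[Theorem]{flenner88extendability} the map (\ref{equation holomorphic extension}) is already an isomorphism for $0 \le p \le c-2$, and by \cite[Theorem 1.4]{kebekus2021extending} holomorphic extension in a given degree forces it in all lower degrees; hence it suffices to treat $p = c-1$. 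For this value $\dim \Sigma = \dim X - p - 1$, so the relevant degree satisfies $p \le \dim X - 1$ — this is precisely why the Gorenstein log-canonical failures, which occur in degree $\dim X$, are not an obstruction.

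Next I would use Theorem \ref{theorem logarithmic extension intro} to turn the problem into a vanishing statement. Factor the natural maps as $\pi_*\Omega_{\widetilde X}^p \hookrightarrow \pi_*\Omega_{\widetilde X}^p(\log E) \hookrightarrow \Omega_X^{[p]}$. By Theorem \ref{theorem logarithmic extension intro} the second inclusion is an isomorphism, so $\pi_*\Omega_{\widetilde X}^p(\log E) = \Omega_X^{[p]}$ is reflexive, hence satisfies Serre's condition $S_2$; as $\mathrm{codim}_X(\Sigma) \ge 2$ this forces $\mathscr H^0_\Sigma(X,\Omega_X^{[p]}) = \mathscr H^1_\Sigma(X,\Omega_X^{[p]}) = 0$. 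Let $\mathcal Q$ be the cokernel of $\pi_*\Omega_{\widetilde X}^p \hookrightarrow \Omega_X^{[p]}$, a sheaf supported on $\Sigma$; the local cohomology sequence of $0 \to \pi_*\Omega_{\widetilde X}^p \to \Omega_X^{[p]} \to \mathcal Q \to 0$ along $\Sigma$ then gives a natural isomorphism $\mathcal Q \cong \mathscr H^1_\Sigma(X, \pi_*\Omega_{\widetilde X}^p)$, so the theorem is equivalent to the vanishing $\mathscr H^1_\Sigma(X, \pi_*\Omega_{\widetilde X}^p) = 0$. Equivalently, writing $0 \to \Omega_{\widetilde X}^p \to \Omega_{\widetilde X}^p(\log E) \to \mathcal T \to 0$ with $\mathcal T$ the residue quotient — supported on $E$, with weight-graded pieces the pushforwards of $\Omega_{E_I}^{p-k}$ from the codimension-$k$ strata $E_I \subseteq E$ — one must show that the connecting morphism $\pi_*\mathcal T \to R^1\pi_*\Omega_{\widetilde X}^p$ is injective: no reflexive $p$-form, viewed on the resolution, may acquire a genuine logarithmic pole along a component of $E$.

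The heart of the proof is this injectivity. Suppose it fails: localizing at the generic point of a top-dimensional component of the support of $\mathcal Q$ inside $\Sigma$, one obtains a local section $\sigma$ of $\Omega_X^{[p]} = \pi_*\Omega_{\widetilde X}^p(\log E)$ whose Poincar\'e residue along some component $E_1$ of $E$ is a non-zero logarithmic $(p-1)$-form on the smooth variety $E_1$. Now $E_1$ is contracted onto $\pi(E_1) \subseteq \Sigma$ with general fibre of dimension $\ge \dim E_1 - \dim \Sigma = p$, and more generally the iterated residue along a codimension-$k$ stratum $E_I$ is a log $(p-k)$-form on a smooth variety all of whose fibres over $\Sigma$ have dimension $\ge p - k + 1$; in particular, since $p < c$, every residue degree $p-k$ that appears lies strictly below $\mathrm{codim}_X(\Sigma)$. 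The Du Bois hypothesis enters precisely here: the quasi-isomorphism $\mathscr O_X \to \underline{\Omega}_X^0$, together with the analogous statements for the graded pieces $\underline{\Omega}_X^q$ that underlie Theorem \ref{theorem logarithmic extension intro}, should force the class produced by such a residue to lie in a piece of the (mixed) Hodge structure carried by $E$ which Du Bois singularities eliminate, so that the residue vanishes; descending the weight filtration then shows $\sigma$ is in fact holomorphic, a contradiction. Hence $\mathcal Q = 0$ and (\ref{equation holomorphic extension}) is an isomorphism.

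The main obstacle is exactly turning ``the exceptional strata have fibres over $\Sigma$ too large to carry a residue coming from $\Omega_X^{[p]}$'' into an honest vanishing, since a bare dimension count does not kill a form whose degree is merely below the dimension of its support. The robust route — following the strategy used for rational singularities in \cite{kebekus2021extending} — is to first reduce to the case of isolated Du Bois singularities by cutting with general hyperplane sections: these preserve both the Du Bois property and $\mathrm{codim}_X(\Sigma)$, and the sheaves $\Omega_X^{[p]}$ restrict compatibly through the standard conormal sequences, so after $\dim \Sigma$ cuts one has $\dim X = c$, $\Sigma$ a point, and $E$ a projective variety of dimension $\dim X - 1$. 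For isolated Du Bois singularities the obstruction becomes an explicit graded piece of the mixed Hodge structure on the cohomology of $E$, and the equality $\mathscr O_X \cong \underline{\Omega}_X^0$ is precisely what makes that piece vanish. Carrying out this Lefschetz-type reduction so that $\pi$, $\Sigma$ and $\Omega_X^{[p]}$ all restrict compatibly, and then pinning down the relevant Hodge-theoretic quantity and its link to the Du Bois condition, is where the real work lies.
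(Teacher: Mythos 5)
Your reduction is sound as far as it goes: Flenner plus \cite[Theorem 1.4]{kebekus2021extending} reduces everything to $p = \mathrm{codim}_X(\Sigma)-1$, and factoring $\pi_*\Omega_{\widetilde X}^p \hookrightarrow \pi_*\Omega_{\widetilde X}^p(\log E) \hookrightarrow \Omega_X^{[p]}$ and invoking Theorem \ref{theorem logarithmic extension intro} for the second arrow correctly isolates the problem as the surjectivity of (\ref{equation inclusion holomorphic log}) in that one degree. This is exactly the ``alternative proof'' the paper records at the end of \S\ref{subsection proof of 1.2}, via Theorem \ref{theorem log forms to holomorphic forms}. But the remaining step --- which is the entire content of the theorem beyond Flenner and logarithmic extension --- is not proved in your proposal; you say yourself that ``pinning down the relevant Hodge-theoretic quantity and its link to the Du Bois condition is where the real work lies.'' That is the gap. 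Worse, your proposed mechanism is misdirected on two counts. First, the surjectivity of (\ref{equation inclusion holomorphic log}) in degree $\mathrm{codim}_X(\Sigma)-1$ has nothing to do with the Du Bois hypothesis: Theorem \ref{theorem log forms to holomorphic forms} holds for \emph{arbitrary} normal varieties, and its proof kills the putative residue not by a Hodge-theoretic vanishing extracted from $\mathscr O_X \cong \underline\Omega_X^0$ but by iterating residues down to a section of $\mathscr O(\textstyle\sum m_iE_i)|_{E_j}$ and contradicting the negativity lemma \cite[Proposition 7.5]{greb2010extension}, after a Graf--Kov\'acs-style hyperplane induction. Second, the naive Lefschetz reduction you propose at the level of the sheaves $\pi_*\Omega_{\widetilde X}^p$ is precisely what \S\ref{subsection intro proof 1.2} explains does not close up: in the diagram (\ref{equation conormal form diagram}) the inductive hypothesis controls the outer vertical arrows but not the middle one, and reflexivity of $(\pi|_H)_*\Omega_{\widetilde H}^p(\log E|_H)$ does not descend to reflexivity of $\pi_*\Omega_{\widetilde X}^p(\log E)$; a five-lemma argument begs the question.

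The paper's primary proof supplies the missing input differently: it replaces the sheaf statement by the stronger support condition $\dim\mathrm{Supp}\,\mathscr H^{j+n-p}\mathrm{gr}^F_{-(n-p)}\mathrm{DR}(\mathcal{IC}_X) \le -(j+2)$, which \emph{is} stable under generic hyperplane cuts via the non-characteristic restriction sequence (\ref{equation conormal cutdown}) for the intersection Hodge module. The induction then leaves exactly one case uncovered, and that base case is Proposition \ref{proposition vanishing intersection hodge module Du Bois}: $\mathscr H^0\mathrm{gr}^F_{-1}\mathrm{DR}(\mathcal{IC}_X)=0$, deduced from reflexivity of $\pi_*\omega_{\widetilde X}(E)$ (Theorem \ref{theorem logarithmic extension intro} in its strong, derived-dual form, Corollary \ref{corollary vanishing Du Bois singularities log hodge module}) together with the weight-filtration identification $\mathrm{gr}_n^W\mathcal N_0 \cong \mathcal{IC}_X$. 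This is the concrete ``piece of the Hodge structure that Du Bois singularities eliminate'' which your proposal gestures at but never identifies, and without it (or the negativity-lemma argument of Theorem \ref{theorem log forms to holomorphic forms}) the proof is incomplete.
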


\subsection{Proof of Theorem \ref{theorem holomorphic extension intro} for Isolated Singularities}\label{subsection intro proof 1.2} We note that Theorem \ref{theorem holomorphic extension intro} (and its dependence on Theorem \ref{theorem logarithmic extension intro}) has been observed in the literature in special cases. For one, Graf-Kov\'acs observe the Zariski-Lipman conjecture holds for Du Bois singularities by demonstrating that the natural inclusion \begin{equation}\label{equation inclusion holomorphic log} \pi_*\Omega_{\widetilde X}^p \hookrightarrow \pi_*\Omega_{\widetilde X}^p(\log E)  
\end{equation} is an isomorphism for $p = 1$, where $\pi:\widetilde X \to X$ is a log-resolution of singularities, and showing $\pi_*\Omega_{\widetilde X}^1(\log E)$ is reflexive \cite{graf2013optimal}.  By Flenner's criterion, this is novel exactly when $\mathrm{codim}_X(\Sigma) = 2$. Theorem \ref{theorem holomorphic extension intro} can also be seen to hold as a corollary of Theorem \ref{theorem logarithmic extension intro} for \textit{isolated singularities} by an old result of van Straten-Steenbrink: if $j:U \hookrightarrow X$ is the inclusion of the regular locus $U$, there is a differentiation morphism $$d: j_*\Omega_U^{\dim X-1}/\pi_*\Omega_{\widetilde X}^{\dim X - 1} \to j_*\omega_U/\pi_*\omega_{\widetilde X}(E)$$ for any log-resolution of singularities $\pi:\widetilde X \to X$, which is injective if $X$ has isolated singularities \cite[Cor. 1.4]{van1985extendability}. 

An obvious idea is to reduce Theorem \ref{theorem holomorphic extension intro} to the case of isolated singularities by cutting $X$ down by successive hyperplane sections.  More specifically, let $\pi:\widetilde X \to X$ be a log-resolution of singularities with exceptional divisor $E$.  If $H$ is a very general hyperplane section of $X$, then there is an induced log-resolution of singularities $\pi|_H:\widetilde H \to H$, and we let $E|_H$ be the induced exceptional divisor.  For $p \ge 1$, consider the commutative diagram 
\begin{equation}\label{equation conormal form diagram}
\begin{tikzcd} 
   N_{H|X}^*\otimes (\pi|_H)_*\Omega_{\widetilde H}^{p-1} \arrow{r} \arrow{d} & \mathscr O_H \otimes \pi_*\Omega_{\widetilde X}^p \arrow{r} \arrow{d} & (\pi|_H)_*\Omega_{\widetilde H}^p \arrow{d} \\
   N_{H|X}^*\otimes (\pi|_H)_*\Omega_{\widetilde H}^{p-1}(\log E|_H) \arrow{r} & \mathscr O_H \otimes \pi_*\Omega_{\widetilde X}^p(\log E) \arrow{r} & (\pi|_H)_*\Omega_{\widetilde H}^p(\log E|_H)
\end{tikzcd}
\end{equation} Recalling that general hyperplane sections preserve the Du Bois property, an inductive hypothesis would imply that the left and right vertical morphisms are isomorphisms --- this is exactly the approach used in \cite{graf2013optimal} for $p = 1$, but they need to use the negativity lemma \cite[Proposition 7.5]{greb2010extension}, which seems particularly special for $p = 1$.  Induction seems insufficient in proving $\mathscr O_H\otimes \pi_*\Omega_{\widetilde X}^p \to \mathscr O_H\otimes \pi_*\Omega_{\widetilde X}^p(\log E)$ is an isomorphism, and the reflexivity of the sheaf $(\pi|_H)_*\Omega_{\widetilde H}^p(\log E|_H)$ is not sufficient to prove the reflexivity of $\pi_*\Omega_{\widetilde X}^p(\log E)$.  We require additional input.

\subsection{Extension Criterion and Hodge Modules}

To prove Theorem \ref{theorem holomorphic extension intro}, we consider the following well-known interpretation of extension for coherent sheaves:

\begin{proposition} \label{proposition kebekus sheaf criterion}\cite[Corollary 6.2]{kebekus2021extending}
Let $Y$ be a complex manifold and $\mathscr F$ a coherent sheaf of $\mathscr O_Y$-modules.  If $\mathrm{Supp}~\mathscr F$ has pure dimension $n$, the following are equivalent:

\begin{enumerate}[label=\normalfont(\roman*)]
    \item Section of $\mathscr F$ extend uniquely across any subset $A \subset Y$ with $\dim A \le n-2$.

    \item For every $k \ge -n + 1$ $\dim \mathrm{Supp}~R^k\mathscr Hom_{\mathscr O_Y}(\mathscr F, \omega_Y^\bullet) \le -(k + 2$), where $\omega_Y^\bullet$ is the dualizing complex of $Y$.
\end{enumerate}
\end{proposition}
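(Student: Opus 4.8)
The plan is to convert both statements into information about the stratification of $\mathrm{Supp}\,\mathscr F$ by the depth of $\mathscr F$, using local cohomology for (i) and Grothendieck--Serre duality for (ii). I would first reformulate (i). For a closed analytic subset $A$ with open complement $j_A\colon Y\setminus A\hookrightarrow Y$, the assertion that sections of $\mathscr F$ extend uniquely across $A$ is exactly that $\mathscr F\to j_{A*}j_A^*\mathscr F$ is an isomorphism, which by the local-cohomology triangle $R\underline\Gamma_A\mathscr F\to\mathscr F\to Rj_{A*}j_A^*\mathscr F\xrightarrow{+1}$ amounts to $\mathscr H^0_A(\mathscr F)=\mathscr H^1_A(\mathscr F)=0$. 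Since $\mathscr F$ is supported on $\mathrm{Supp}\,\mathscr F$ one may replace $A$ by $A\cap\mathrm{Supp}\,\mathscr F$ and assume $A$ is a closed analytic subset of the pure $n$-dimensional space $\mathrm{Supp}\,\mathscr F$ with $\dim A\le n-2$. Grothendieck's vanishing criterion says $\mathscr H^0_A(\mathscr F)=\mathscr H^1_A(\mathscr F)=0$ if and only if $\mathrm{depth}_{\mathscr I_A}(\mathscr F_x)\ge 2$ for all $x$; combining this with the identity $\mathrm{depth}_I(M)=\inf\{\mathrm{depth}(M_{\mathfrak p}):\mathfrak p\in V(I)\cap\mathrm{Supp}\,M\}$ and letting $A$ range over all such subsets, condition (i) becomes: $\mathrm{depth}(\mathscr F_{\eta_Z})\ge 2$ at the generic point $\eta_Z$ of every irreducible closed analytic $Z\subseteq\mathrm{Supp}\,\mathscr F$ with $\dim Z\le n-2$, i.e. $\mathscr F$ satisfies Serre's condition $S_2$ at every codimension-$\ge 2$ point of its support.

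Next I would set up the duality dictionary. Write $\mathscr G^k:=R^k\mathscr Hom_{\mathscr O_Y}(\mathscr F,\omega_Y^\bullet)$; since $Y$ is a complex manifold, $\omega_Y^\bullet\cong\Omega_Y^N[N]$ with $\Omega_Y^N$ invertible ($N=\dim Y$), so $\mathscr G^k\cong\mathscr Ext^{N+k}_{\mathscr O_Y}(\mathscr F,\Omega_Y^N)$, and the Auslander--Buchsbaum formula gives, at any point $x$, that $\mathscr G^k_x\ne 0$ if and only if $\mathrm{depth}(\mathscr F_x)\le -k\le\dim(\mathscr F_x)$. Pure-dimensionality of $\mathrm{Supp}\,\mathscr F$ forces $\mathscr G^k=0$ for $k\ge 0$, makes $\mathscr G^{-n}$ (the dualizing sheaf of $\mathscr F$) supported on all of $\mathrm{Supp}\,\mathscr F$, and for $1\le j\le n$ identifies
\[
\mathrm{Supp}\,\mathscr G^{-j}\;=\;S_j\;:=\;\{\,x\in\mathrm{Supp}\,\mathscr F:\mathrm{depth}(\mathscr F_x)\le j\,\},
\]
a closed analytic subset that is proper whenever $j\le n-1$, since $\mathscr F$ is locally free, hence of depth $n$, on a dense open subset of $\mathrm{Supp}\,\mathscr F$. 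So condition (ii) is exactly the requirement $\dim S_j\le j-2$ for every $1\le j\le n-1$.

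The remaining step, which I expect to be the main obstacle, is to match the two descriptions, for which the key input is that along an irreducible $Z\subseteq\mathrm{Supp}\,\mathscr F$ one has $\mathrm{depth}(\mathscr F_x)=\mathrm{depth}(\mathscr F_{\eta_Z})+\dim Z$ for $x$ in a dense open subset of $Z$ --- this follows by spreading out a minimal free resolution of $\mathscr F_{\eta_Z}$ over $\mathscr O_{Y,\eta_Z}$ to a neighbourhood (exactness being open) together with $\mathrm{pd}_{\mathscr O_{Y,x}}(\mathscr F_x)\ge\mathrm{pd}_{\mathscr O_{Y,\eta_Z}}(\mathscr F_{\eta_Z})$. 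Granting this: if all $\dim S_j\le j-2$ for $j\le n-1$ but some irreducible $Z$ with $\dim Z\le n-2$ had $d:=\mathrm{depth}(\mathscr F_{\eta_Z})\le 1$, then $Z\subseteq S_{d+\dim Z}$ with $d+\dim Z\le n-1$, whence $\dim Z\le (d+\dim Z)-2$, absurd; conversely, if $\dim S_j\ge j-1$ for some $j\le n-1$, a maximal-dimensional component $Z$ of $S_j$ has $\mathrm{depth}(\mathscr F_{\eta_Z})\le j-\dim Z\le 1$, which violates the condition of the first step when $\dim Z\le n-2$, and when $\dim Z=n-1$ would force $\mathrm{depth}(\mathscr F_{\eta_Z})=0$, i.e. an embedded component of $\mathscr F$ of dimension $n-1$ --- excluded by the pure-dimensionality hypothesis. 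The real content, and the point that requires genuine care in the analytic category, is exactly this semicontinuity-of-depth bookkeeping together with the correct use of pure-dimensionality; once it is in hand the three steps combine to give the equivalence.
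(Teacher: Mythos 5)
The paper does not actually prove this proposition; it is quoted verbatim from Kebekus--Schnell, so there is no in-paper argument to measure you against and I can only assess your proof on its own terms. Your overall route --- turn (i) into a depth condition at generic points of low-dimensional subvarieties of $\mathrm{Supp}\,\mathscr F$ via local cohomology, turn (ii) into a bound on the depth stratification via Auslander--Buchsbaum over the regular ambient ring, and match the two using semicontinuity of projective dimension along an irreducible $Z$ --- is the standard one and is essentially sound; the reformulation of (i) and the final bookkeeping in your last paragraph are correct.

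Two steps need repair, the second of which is a genuine error. First, the asserted equivalence ``$\mathscr G^k_x\neq 0$ iff $\mathrm{depth}(\mathscr F_x)\le -k\le\dim(\mathscr F_x)$,'' and hence the equality $\mathrm{Supp}\,\mathscr G^{-j}=S_j$, is false: over a regular local ring $\mathrm{Ext}^i(M,R)$ is guaranteed nonzero only at the extremes $i=\mathrm{codim}\,M$ and $i=\mathrm{pd}\,M$ and may vanish in between, so only the inclusion $\mathrm{Supp}\,\mathscr G^{-j}\subseteq S_j$ holds. This is harmless --- one has $S_j=\bigcup_{i\le j}\mathrm{Supp}\,\mathscr G^{-i}$, so the \emph{family} of bounds $\dim\mathrm{Supp}\,\mathscr G^{-j}\le j-2$ for all $j$ is still equivalent to $\dim S_j\le j-2$ for all $j$ --- but the argument must run through the union, not through a set equality. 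Second, and more seriously: pure dimensionality of $\mathrm{Supp}\,\mathscr F$ does \emph{not} force $\mathscr G^k=0$ for $k\ge 0$, nor does it exclude embedded associated subvarieties of dimension $n-1$. The sheaf $\mathscr F=\mathscr O_Y\oplus\mathscr O_D$ on a surface $Y$ with $D$ a curve has support of pure dimension $2$, yet $\mathscr G^{-1}$ is nonzero along $D$ precisely because $\mathrm{depth}\,\mathscr F_{\eta_D}=0$. So your boundary case $\dim Z=n-1$, $\mathrm{depth}\,\mathscr F_{\eta_Z}=0$ cannot be dismissed by purity; it must be ruled out by condition (i) itself. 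This can be done: if $\eta_Z$ is an associated point with $\dim Z=n-1\ge 1$, then for any irreducible $W\subset Z$ of dimension $n-2$ the Ischebeck-type inequality $\mathrm{depth}\,\mathscr F_{\eta_W}\le\dim\bigl(\mathscr O_{Y,\eta_W}/\mathfrak q\,\mathscr O_{Y,\eta_W}\bigr)=\dim Z-\dim W=1$ (valid whenever $\mathfrak q=\eta_Z$ is associated to $\mathscr F$) contradicts the depth-$\ge 2$ reformulation of (i); likewise $\mathscr G^0\neq 0$ means $\mathscr F$ has a $0$-dimensional associated point, which for $n\ge 2$ already destroys injectivity in (i). With these two patches --- and the care you already flag about interpreting stalks at generic points in the analytic category --- the argument closes.
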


We will often apply this to a singular variety $X$ of dimension $n$ by considering a \textit{local} embedding $X \subset Y$ into a smooth complex manifold $Y$. There is also a derived version of this criterion discussed in \cite{kebekus2021extending}.  We remark that this statement is stronger than holomorphic extension:

\begin{proposition}\label{proposition kebekus criterion complexes}
\cite[Proposition 6.4]{kebekus2021extending} Let $Y$ be a complex manifold, let $A \subset Y$ be a complex subspace, and let $K \in \mathrm{D}_{\mathrm{coh}}^b(\mathscr O_Y)$ be a complex with $\mathscr H^jK = 0$ for $j < 0$.  If $$\dim (A \cap \mathrm{Supp}~R^k\mathscr Hom_{\mathscr O_Y}(K, \omega_Y^\bullet)) \le -(k + 2)$$ for every $k \in \mathbb Z$, then the sections of $\mathscr H^0K$ extend uniquely across $A$.
\end{proposition}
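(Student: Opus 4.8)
The plan is to reduce the statement to the vanishing of the two lowest local cohomology sheaves of $K$ along $A$, and then to obtain that vanishing from Grothendieck duality on the smooth manifold $Y$. Since local cohomology depends only on the underlying closed set, we may assume $A$ is reduced; write $\mathscr I\subset\mathscr O_Y$ for its ideal sheaf, $j\colon Y\setminus A\hookrightarrow Y$ for the open inclusion, and $R\underline\Gamma_A$ for the derived functor of sections supported on $A$, with $\underline H^i_A(-)=\mathscr H^iR\underline\Gamma_A(-)$. The excision triangle $R\underline\Gamma_A(K)\to K\to Rj_*j^*K\xrightarrow{+1}$ gives a long exact sequence of cohomology sheaves whose relevant segment is
\[
\underline H^0_A(K)\longrightarrow\mathscr H^0K\longrightarrow\mathscr H^0(Rj_*j^*K)\longrightarrow\underline H^1_A(K).
\]
As $K\in\mathrm D^b_{\mathrm{coh}}(\mathscr O_Y)$ has no cohomology in negative degrees, neither does $j^*K$, so the left-exactness of $Rj_*$ identifies $\mathscr H^0(Rj_*j^*K)$ with $j_*\big(\mathscr H^0K|_{Y\setminus A}\big)$. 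Hence, once $\underline H^0_A(K)=\underline H^1_A(K)=0$, the restriction map $\mathscr H^0K\to j_*\big(\mathscr H^0K|_{Y\setminus A}\big)$ is an isomorphism, which is exactly the statement that sections of $\mathscr H^0K$ extend uniquely across $A$. Everything is therefore reduced to showing $R\underline\Gamma_A(K)\in\mathrm D^{\ge2}(\mathscr O_Y)$.

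For this I would use that $R\underline\Gamma_A(K)\simeq\varinjlim_n R\mathscr Hom_{\mathscr O_Y}(\mathscr O_Y/\mathscr I^n,K)$, so it suffices to prove $R\mathscr Hom_{\mathscr O_Y}(\mathscr O_Y/\mathscr I^n,K)\in\mathrm D^{\ge2}$ for every $n$. Since $Y$ is smooth, biduality gives $K\simeq\mathbb D_Y\mathbb D_YK$ for $\mathbb D_Y(-)=R\mathscr Hom_{\mathscr O_Y}(-,\omega_Y^\bullet)$, and the tensor--hom adjunction rewrites $R\mathscr Hom_{\mathscr O_Y}(\mathscr O_Y/\mathscr I^n,K)$ as $\mathbb D_YG_n$, where $G_n:=\mathscr O_Y/\mathscr I^n\otimes^{\mathbf L}_{\mathscr O_Y}\mathbb D_YK$. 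The complex $G_n$ is bounded and supported on $A$, and the Tor spectral sequence shows that $\mathscr H^m(G_n)$ is a subquotient of $\bigoplus_{k\ge m}\mathrm{Tor}^{\mathscr O_Y}_{k-m}\!\big(\mathscr O_Y/\mathscr I^n,\,R^k\mathscr Hom_{\mathscr O_Y}(K,\omega_Y^\bullet)\big)$; hence it is supported on $\bigcup_{k\ge m}\big(A\cap\mathrm{Supp}\,R^k\mathscr Hom_{\mathscr O_Y}(K,\omega_Y^\bullet)\big)$, a set of dimension at most $\max_{k\ge m}\big(-(k+2)\big)=-(m+2)$ by hypothesis --- in particular $\mathscr H^m(G_n)=0$ for $m>-2$. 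Now $\mathbb D_Y$ sends a coherent sheaf with $d$-dimensional support into $\mathrm D^{\ge-d}$, because $\mathscr Ext^{\ell}_{\mathscr O_Y}(\mathscr F,\mathscr O_Y)=0$ for $\ell<\mathrm{codim}\,\mathrm{Supp}\,\mathscr F$; therefore dualizing the piece $\mathscr H^m(G_n)[-m]$ yields a complex in $\mathrm D^{\ge-\dim\mathrm{Supp}\,\mathscr H^m(G_n)-m}\subseteq\mathrm D^{\ge2}$. Running this through the finite filtration of $G_n$ by its cohomology sheaves gives $\mathbb D_YG_n\in\mathrm D^{\ge2}$, and passing to the colimit over $n$ yields $R\underline\Gamma_A(K)\in\mathrm D^{\ge2}$.

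I expect the main obstacle to be the interface between local cohomology and duality in the analytic category: the identification $R\underline\Gamma_A(K)\simeq\varinjlim_n R\mathscr Hom(\mathscr O_Y/\mathscr I^n,K)$ and the biduality isomorphism $K\simeq\mathbb D_Y\mathbb D_YK$ are routine for algebraic schemes but need the usual care for complex manifolds --- one works with the appropriate version of local cohomology, or reduces to an algebraic model of a local embedding as in Proposition~\ref{proposition kebekus sheaf criterion}. The remaining ingredients, namely the Tor and hyper-$\mathscr Ext$ spectral sequences and the bookkeeping $\max_{k\ge m}(-(k+2))=-(m+2)$, are mechanical. It is worth emphasizing that the argument applies verbatim to a genuine complex $K$, with no dévissage on the sheaves $\mathscr H^qK$; this matters, since the hypothesis bounding $\mathrm{Supp}\,R^k\mathscr Hom_{\mathscr O_Y}(K,\omega_Y^\bullet)$ need not descend to the cohomology sheaves of $K$ (cancellation can occur in the spectral sequence computing $\mathbb D_YK$).
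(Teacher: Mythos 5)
This proposition is quoted verbatim from \cite[Proposition 6.4]{kebekus2021extending}; the paper offers no proof of its own, so the only meaningful comparison is with the cited source. Your argument is correct and is essentially the standard one: reduce unique extension of sections of $\mathscr H^0K$ to the vanishing of $\underline H^0_A(K)$ and $\underline H^1_A(K)$ via the excision triangle (your identification $\mathscr H^0(\mathbf Rj_*j^*K)\cong j_*(\mathscr H^0K|_{Y\setminus A})$, which uses $K\in \mathrm D^{\ge 0}$, is the right bridge to the statement as phrased), and then obtain $\mathbf R\underline\Gamma_A(K)\in \mathrm D^{\ge 2}$ from biduality, tensor--hom adjunction, the support bound on $\mathscr O_Y/\mathscr I^n\otimes^{\mathbf L}\mathbb D_YK$, and the codimension bound $\mathscr Ext^{\ell}(\mathscr F,\mathscr O_Y)=0$ for $\ell<\mathrm{codim}\,\mathrm{Supp}\,\mathscr F$. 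The bookkeeping ($\max_{k\ge m}(-(k+2))=-(m+2)$, the shift conventions in $\mathbb D_Y(\mathscr F[-m])$) checks out. The two places you flag are indeed the only ones requiring citations rather than proof: the comparison $\underline H^i_A(-)\cong\varinjlim_n\mathscr Ext^i(\mathscr O_Y/\mathscr I^n,-)$ for closed analytic subsets of a complex manifold (Siu--Trautmann, or Bănică--Stănăşilă II.\S4; powers of the reduced ideal are cofinal by the analytic Nullstellensatz), and biduality for $\mathrm D^b_{\mathrm{coh}}$ on a manifold, which is local and reduces to the regular local ring. Your closing remark is also a genuine point, not a throwaway: the hypothesis for $k\le -2$ does not descend to $\mathscr H^0K$ through the truncation triangle (only the quotient maps $\mathscr H^k\mathbb D_Y(K)\twoheadrightarrow \mathscr H^k\mathbb D_Y(\mathscr H^0K)$ for $k\ge -1$ come for free), so a naive dévissage to Proposition \ref{proposition kebekus sheaf criterion} would not work, and the direct local-cohomology computation you give is the correct way around it.
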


To highlight the idea, let $X$ be a normal variety of dimension $n$ with at worst \textit{isolated} Du Bois singularities, which implies that the cohomology sheaves $R^j\pi_*\mathscr O_{\widetilde X}(-E) = 0$ for $i > 0$ for any log-resolution of singularities $\pi:\widetilde X \to X$.  We use Proposition \ref{proposition kebekus criterion complexes} to test holomorphic extension on \begin{equation}\label{equation log Grauert-Riemenschneider}
    \pi_*\omega_{\widetilde X}(E) = R^0\pi_*\omega_{\widetilde X}(E).
\end{equation} By Grothendieck duality, $$R^j\mathscr Hom_{\mathscr O_X}(\mathbf R\pi_*\omega_{\widetilde X}(E), \omega_X^\bullet)[-n] \cong R^j\pi_*\mathscr O_{\widetilde X}(-E) = 0$$ for $j > 0$.  Thus, $\pi_*\omega_{\widetilde X}(E)$ is reflexive.  We emphasize that this vanishing is much stronger than logarithmic extension and is only equivalent if $X$ is Cohen-Macaulay.  Theorem \ref{theorem logarithmic extension intro} follows more generally from the following lemma:

\begin{lemma}
If $X$ is a normal complex algebraic variety with a log-resolution of singularities $\pi:\widetilde X \to X$, then $\mathscr H^0\mathbb D_X(\underline \Omega_X^0) \cong \pi_*\omega_{\widetilde X}(E)$, where $\mathbb D_X$ is the Grothendieck duality functor.
\end{lemma}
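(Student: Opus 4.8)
The plan is to compare $\underline\Omega_X^0$ with the ideal-sheaf Du Bois complex of the pair $(X,\Sigma)$, where $\Sigma = X_{\mathrm{sing}}$ carries its reduced structure: this latter complex is computed by $\pi$, and Grothendieck duality for $\pi$ then produces $\pi_*\omega_{\widetilde X}(E)$. Set $n = \dim X$; we may assume $\pi$ is an isomorphism over $X \setminus \Sigma$, so that $E = \pi^{-1}(\Sigma)$ is the reduced exceptional divisor and is simple normal crossing, and note $\dim\Sigma \le n - 2$ since $X$ is normal. I would begin from the exact triangle
$$\underline\Omega_{X,\Sigma}^0 \to \underline\Omega_X^0 \to \underline\Omega_\Sigma^0 \xrightarrow{+1}$$
defining the ideal-sheaf Du Bois complex $\underline\Omega_{X,\Sigma}^0$ of the pair (with $\underline\Omega_\Sigma^0$ pushed forward to $X$), together with the standard identification $\underline\Omega_{X,\Sigma}^0 \cong \mathbf R\pi_*\mathscr O_{\widetilde X}(-E)$, which holds because $\pi$ is a log-resolution of $(X,\Sigma)$ that is an isomorphism over $X \setminus \Sigma$, and which is the pair analogue of $\underline\Omega_X^0 \cong \mathbf R\pi_*\mathscr O_{\widetilde X}$ for Du Bois $X$.

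Applying $\mathbb D_X$ to the triangle gives $\mathbb D_X(\underline\Omega_\Sigma^0) \to \mathbb D_X(\underline\Omega_X^0) \to \mathbb D_X(\underline\Omega_{X,\Sigma}^0) \xrightarrow{+1}$. By Grothendieck--Verdier duality for the proper morphism $\pi$, together with $\mathbb D_{\widetilde X}(\mathscr O_{\widetilde X}(-E)) \cong \omega_{\widetilde X}(E)$ on the smooth variety $\widetilde X$, one obtains $\mathbb D_X(\underline\Omega_{X,\Sigma}^0) \cong \mathbb D_X(\mathbf R\pi_*\mathscr O_{\widetilde X}(-E)) \cong \mathbf R\pi_*\omega_{\widetilde X}(E)$ --- this is the duality computation used in the discussion preceding the lemma --- so that $\mathscr H^0\mathbb D_X(\underline\Omega_{X,\Sigma}^0) \cong \pi_*\omega_{\widetilde X}(E)$. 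It therefore suffices to prove that $\mathscr H^0\mathbb D_X(\underline\Omega_\Sigma^0) = \mathscr H^1\mathbb D_X(\underline\Omega_\Sigma^0) = 0$; the long exact cohomology sequence of the dualized triangle then yields $\mathscr H^0\mathbb D_X(\underline\Omega_X^0) \cong \mathscr H^0\mathbb D_X(\underline\Omega_{X,\Sigma}^0) \cong \pi_*\omega_{\widetilde X}(E)$. For the inclusion $\iota : \Sigma \hookrightarrow X$ one has $\mathbb D_X(\underline\Omega_\Sigma^0) \cong \iota_*\mathbb D_\Sigma(\underline\Omega_\Sigma^0)[\dim\Sigma - n]$, a downward shift by $n - \dim\Sigma \ge 2$; hence it is enough that $\mathbb D_\Sigma(\underline\Omega_\Sigma^0)$ have no cohomology in negative degrees, equivalently that $\mathscr H^j(\underline\Omega_\Sigma^0)$ be supported in dimension $\le \dim\Sigma - j$ for every $j \ge 0$.

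I expect this support estimate to be the real content of the lemma: the bounds available just from $\underline\Omega_\Sigma^0 \in \mathrm D^{[0,\dim\Sigma]}_{\mathrm{coh}}$ and $\dim\Sigma \le n-2$ are too weak, and one genuinely needs the finer statement. It can be proved by induction on $\dim\Sigma$: choosing a log-resolution $\rho : \widetilde\Sigma \to \Sigma$ which is an isomorphism over $\Sigma \setminus \Sigma_{\mathrm{sing}}$, with reduced exceptional divisor $E_\Sigma$, the Du Bois triangle $\underline\Omega_\Sigma^0 \to \underline\Omega_{\Sigma_{\mathrm{sing}}}^0 \oplus \mathbf R\rho_*\mathscr O_{\widetilde\Sigma} \to \mathbf R\rho_*\mathscr O_{E_\Sigma} \xrightarrow{+1}$ reduces the estimate, for $j \ge 1$, to the facts that $R^j\rho_*\mathscr O_{\widetilde\Sigma}$ and $R^{j-1}\rho_*\mathscr O_{E_\Sigma}$ are supported on the loci where the fibers of $\rho$, respectively of $\rho|_{E_\Sigma}$, have dimension $\ge j$, respectively $\ge j-1$ --- loci of dimension $\le \dim\Sigma - j$ by the theorem on fiber dimensions --- together with the inductive bound applied to $\underline\Omega_{\Sigma_{\mathrm{sing}}}^0$ (using $\dim\Sigma_{\mathrm{sing}} \le \dim\Sigma - 1$); for $j = 0$ one notes only that $\mathscr H^0(\underline\Omega_\Sigma^0)$ is a coherent sheaf supported on all of $\Sigma$. (Alternatively, this amplitude statement is a consequence of Saito's theory of mixed Hodge modules, $\underline\Omega_\Sigma^0$ being $\mathrm{gr}^F_0$ of the Hodge-filtered de Rham complex of the constant mixed Hodge module on $\Sigma$.) Granting the support estimate, the remaining ingredients --- the triangle of the pair, the identification $\underline\Omega_{X,\Sigma}^0 \cong \mathbf R\pi_*\mathscr O_{\widetilde X}(-E)$, and the Grothendieck duality computation --- are formal.
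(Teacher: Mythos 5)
Your proposal is correct and is essentially the paper's own argument: both dualize the triangle $\mathbf R\pi_*\mathscr O_{\widetilde X}(-E)\to\underline\Omega_X^0\to\underline\Omega_\Sigma^0\xrightarrow{+1}$, identify $\mathbb D_X(\mathbf R\pi_*\mathscr O_{\widetilde X}(-E))$ with $\mathbf R\pi_*\omega_{\widetilde X}(E)$ (up to shift) by Grothendieck duality, and kill the two lowest cohomologies of $\mathbb D_X(\underline\Omega_\Sigma^0)$ using the amplitude bound $\dim \mathrm{Supp}~\mathscr H^j\underline\Omega_\Sigma^0\le\dim\Sigma-j$ together with $\mathrm{codim}_X(\Sigma)\ge 2$. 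The only difference is that you re-prove that amplitude bound by induction, whereas the paper simply cites it (Theorem \ref{theorem Du Bois properties}\ref{theorem property dim support Du Bois}, via \cite[Corollary 3.7]{kovacs2010canonical}).
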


For holomorphic extension, the problem is more subtle.  The naive approach is to consider the support of the sheaves $R^j\pi_*\Omega_{\widetilde X}^{\dim X - p}$ for $p = \mathrm{codim}_X(\Sigma)$, but this seems hopeless: for instance, $R^{1}\pi_*\Omega_{\widetilde X}^1$ is non-zero for ADE surface singularities --- these singularities satisfy holomorphic extension in all degrees, either classically or by \cite[Corollary 1.8]{kebekus2021extending}.  Instead, we use the brilliant approach of Kebekus-Schnell and the theory of Hodge modules.  

Let $X$ be a normal complex algebraic variety with Du Bois singularities.  There are objects $K_p, K_p' \in \mathrm{D}_{\mathrm{coh}}^b(\mathscr O_X)$ such that $K_p$ defines a sub-object of $\mathbf R\pi_*\Omega_{\widetilde X}^p$ for every $p$, and $$\pi_*\Omega_{\widetilde X}^p \cong \mathscr H^0K_p, \quad \pi_*\Omega_{\widetilde X}^p(\log E) \cong \mathscr H^0K_p'.$$  The objects $K_p$ are defined by the intersection cohomology complex and its data as a pure Hodge module; the objects $K_p'$ are determined by the data of a mixed Hodge module, which we refer to as the logarithmic mixed Hodge module.  Both are uniquely determined as extensions of the trivial Hodge module $\mathbb Q_{\mathrm{reg}}$. 

Assuming that $X$ has Du Bois singularities, Theorem \ref{theorem logarithmic extension intro} and \cite[\S 9]{kebekus2021extending} imply a family of support conditions $$\dim \mathrm{Supp}~ R^j\mathscr Hom_{\mathscr O_X}(K_p', \omega_X^\bullet) \le -(k + 2).$$ The key is to relate this support condition to $K_p$.  In particular, an inductive argument on $\dim \Sigma$ allows us to prove the following, which gives a stronger result than Theorem \ref{theorem holomorphic extension intro}:

\begin{theorem}
Let $X$ be a normal complex algebraic variety with at worst Du Bois singularities.  For $p = \mathrm{codim}_X(\Sigma) - 1$, we have $$\dim \mathrm{Supp}~\mathscr H^{j+n-p} K_{\dim X-p} \le -(j + 2),$$ where $$K_p : = \mathrm{gr}_{-p}^F\mathrm{DR}(IC_X)[p-n]$$ is the $p^{th}$-graded piece of the intersection cohomology Hodge module with its induced Hodge filtration. 
\end{theorem}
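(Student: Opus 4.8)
The plan is to prove, by induction on $d:=\dim\Sigma$, the family of support estimates of which the displayed inequality (for the critical value $p=\mathrm{codim}_X(\Sigma)-1$) is an instance; throughout write $n=\dim X$. The first step is the reduction to a duality statement. Since $IC_X$ is a pure Hodge module it is Verdier self-dual, so $\mathbb D_X K_p\cong K_{n-p}$ up to the shift prescribed by the weight; hence $R^k\mathscr Hom_{\mathscr O_X}(K_p,\omega_X^\bullet)=\mathscr H^k(\mathbb D_X K_p)$ is, up to this shift, $\mathscr H^{j+n-p}K_{n-p}$, and the asserted inequalities are precisely the hypotheses of Proposition~\ref{proposition kebekus criterion complexes} for the complex $K_p$ with $A=\Sigma$. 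They therefore yield holomorphic extension in degree $p=\mathrm{codim}_X(\Sigma)-1$, and hence in all smaller degrees by \cite[Theorem 1.4]{kebekus2021extending}. Away from $\Sigma$ the local model is $IC_X|_{\mathrm{reg}}=\mathbb Q_{\mathrm{reg}}[n]$, whose graded de Rham piece is $\Omega^p_{\mathrm{reg}}$ concentrated in degree $0$; thus $\mathscr H^iK_{n-p}$ is supported on $\Sigma$ for every $i\neq 0$, so the global support bound coincides with its restriction to $\Sigma$ in that range, and the degree-$0$ piece $\mathscr H^0K_{n-p}=\pi_*\Omega^{n-p}_{\widetilde X}$ is to be read against $\Sigma$ as in Proposition~\ref{proposition kebekus criterion complexes}.

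Next I would assemble the two inputs that drive the induction. First, Theorem~\ref{theorem logarithmic extension intro} (valid since $X$ is Du Bois) together with \cite[\S 9]{kebekus2021extending} gives $\dim\mathrm{Supp}~R^k\mathscr Hom_{\mathscr O_X}(K'_p,\omega_X^\bullet)\le -(k+2)$ for all $p,k$, equivalently support bounds on $\mathscr H^\bullet K'_{n-p}$ after dualizing the logarithmic mixed Hodge module. Second, there is a canonical comparison morphism $K_p\to K'_p$ (induced by $\Omega^p_{\widetilde X}\hookrightarrow\Omega^p_{\widetilde X}(\log E)$) whose cone $C_p$ is supported on $\Sigma$; dualizing yields a distinguished triangle $\mathbb D_X C_p\to K_{n-p}\to K'_{n-p}\xrightarrow{+1}$ up to shift. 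So controlling $\mathscr H^\bullet K_{n-p}$ is reduced to controlling the pure-versus-logarithmic defect $C_p$, which lives on $\Sigma$; this is where the gain over Flenner's purely topological criterion comes from.

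For the base case $d=0$ (isolated Du Bois singularities) I would argue as sketched in \S\ref{subsection intro proof 1.2}: the Du Bois property forces $R^{>0}\pi_*\mathscr O_{\widetilde X}(-E)=0$, and by the lemma identifying $\mathscr H^0\mathbb D_X(\underline\Omega_X^0)$ with $\pi_*\omega_{\widetilde X}(E)$ together with Grothendieck duality one obtains the full derived vanishing on the logarithmic side; the differentiation morphism $d\colon j_*\Omega_U^{n-1}/\pi_*\Omega_{\widetilde X}^{n-1}\to j_*\omega_U/\pi_*\omega_{\widetilde X}(E)$ of \cite[Cor.~1.4]{van1985extendability}, injective for isolated singularities, then propagates this to the holomorphic side, which through the equivalence of the first paragraph is the required bound on $\mathscr H^\bullet K_1$. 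For the inductive step, take $H\subset X$ a very general hyperplane section: then $H$ is normal and Du Bois with $\Sigma_H=\Sigma\cap H$ of dimension $d-1$ and $\mathrm{codim}_H(\Sigma_H)=c$. Generic transversality gives $i^*IC_X[-1]\cong IC_H$ as Hodge modules, and compatibility of $\mathrm{gr}^F\mathrm{DR}$ with non-characteristic pullback promotes diagram~\eqref{equation conormal form diagram} to a distinguished triangle
\[
 N_{H\mid X}^*\otimes K^H_{q-1}\to Li^*K_q\to K^H_q\xrightarrow{+1},\qquad q=n-p.
\]
The first term $K^H_{q-1}$ sits at the critical index $\dim H-(\mathrm{codim}_H(\Sigma_H)-1)$ for $H$ and is controlled by the inductive hypothesis; the third term $K^H_q$ is one step above the critical index and is controlled by combining the logarithmic bounds of the second paragraph on $H$ with the comparison triangle there, whose defect on $H$ is supported on $\Sigma_H$ of dimension $d-1$. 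Hence $\mathscr H^\bullet Li^*K_q$ satisfies support bounds relative to $\Sigma_H$, and since $H$ is general the $\mathrm{Tor}$-discrepancies between $Li^*K_q$ and $i^*$ of the cohomology of $K_q$ are supported on $\Sigma_H$ while $\mathrm{Supp}~\mathscr H^iK_q\cap H=\mathrm{Supp}~\mathscr H^i Li^*K_q$ in the relevant range; so $\dim\mathrm{Supp}~\mathscr H^iK_q\le\dim\mathrm{Supp}~\mathscr H^i Li^*K_q+1$, which closes the induction.

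The main obstacle is exactly this last move: recovering a bound on $\mathscr H^\bullet K_q$ over $X$ from one that holds only after restriction to a general $H$. This is where the hyperplane-section strategy of \cite{graf2013optimal} breaks down for $p>1$, because reflexivity of $(\pi|_H)_*\Omega^p_{\widetilde H}(\log E|_H)$ does not imply reflexivity of $\pi_*\Omega^p_{\widetilde X}(\log E)$; the Hodge-theoretic packaging is what rescues it, since $K_q$ and $K'_q$ are intrinsic objects — graded de Rham pieces of Hodge modules pinned down by their restriction to the regular locus — so the restriction triangle above is a genuine identity rather than a mere inclusion and the base-change discrepancy is automatically concentrated on $\Sigma$. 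Two further points need care: that the comparison $K_p\to K'_p$ and the Lefschetz isomorphism $i^*IC_X[-1]\cong IC_H$ are strict for the Hodge filtrations, so that the triangles descend to graded pieces; and that a single very general $H$ can be chosen simultaneously transverse to all the strata of $X$ and to the support loci of the relevant cohomology sheaves, so that the dimension estimates above are uniform.
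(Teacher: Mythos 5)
Your overall architecture (reduce to a support condition on the dual via self-duality of $\mathrm{IC}_X$, establish a base case, induct over general hyperplane sections using the conormal triangle \eqref{equation conormal cutdown}) matches the paper's. But there is a genuine gap in your base case, and it propagates through the induction. For isolated Du Bois singularities the theorem is \emph{not} the assertion that $\pi_*\Omega^{n-1}_{\widetilde X}$ is reflexive; it is the vanishing $\mathscr H^0\mathrm{gr}_{-1}^F\mathrm{DR}(\mathcal{IC}_X)=0$, i.e.\ a statement about the top cohomology of the whole graded de Rham complex, not about its bottom cohomology sheaf $\pi_*\Omega^{n-1}_{\widetilde X}=\mathscr H^{-(n-1)+n-1+\cdots}$. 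The van Straten--Steenbrink differentiation morphism yields reflexivity of $\pi_*\Omega^{n-1}_{\widetilde X}$, and Proposition~\ref{proposition kebekus criterion complexes} only runs in the direction ``support conditions $\Rightarrow$ extension''; the converse (which you invoke as ``the equivalence of the first paragraph'') holds for a single sheaf of pure-dimensional support (Proposition~\ref{proposition kebekus sheaf criterion}) but not for a complex with higher cohomology, because $\mathbb D_X$ of the complex $K_{n-1}$ is not computed from $\mathbb D_X(\mathscr H^0K_{n-1})$ alone. This is exactly the distinction the paper flags at the end of \S\ref{subsection proof of 1.2}: the log-to-holomorphic argument proves extension but ``does not imply support condition (\ref{equation dim support proof 1.2}).'' The missing ingredient is Proposition~\ref{proposition vanishing intersection hodge module Du Bois}: one dualizes the logarithmic support condition coming from Theorem~\ref{theorem logarithmic extension intro} (Corollary~\ref{corollary vanishing Du Bois singularities log hodge module}) and transfers it to $\mathcal{IC}_X$ through the weight filtration, using $W_n N_0\cong \mathrm{IC}_X$, exactness of $\mathrm{gr}^F_{-1}\mathrm{DR}(-)$, and a degree-vanishing for the $W_{n-1}$ part. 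Your ``comparison triangle $K_p\to K_p'$ with cone supported on $\Sigma$'' gestures at this, but knowing the cone is supported on $\Sigma$ only recovers the trivial bound $\dim\Sigma$, not the required improvement by one; the gain comes from the exact weight-graded sequence, which your proposal never actually runs.

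A second, related problem sits in the inductive step. Restricting to a general hyperplane can only control cohomology sheaves of positive-dimensional support: if $\mathscr H^i K_q$ has zero-dimensional support, then $\mathscr O_H\otimes\mathscr H^iK_q=0$ for general $H$ and the induction says nothing. The one genuinely non-vacuous \emph{vanishing} demanded by the support conditions is again $\mathscr H^0\mathrm{gr}_{-1}^F\mathrm{DR}(\mathcal{IC}_X)=0$, so Proposition~\ref{proposition vanishing intersection hodge module Du Bois} is needed not only as the base case but also to close the exceptional case of the induction; your appeal to choosing $H$ ``simultaneously transverse to the support loci'' does not address this. Finally, your control of the third term $K^H_q$ of the restriction triangle is off: on $H$ it corresponds to forms of degree $p-1=\mathrm{codim}_H(\Sigma_H)-2$, i.e.\ it lies \emph{below} the critical index and is handled by the support conditions valid for the intersection Hodge module of an arbitrary normal variety (the Hodge-module refinement of Flenner in \cite[\S 8]{kebekus2021extending}), not by the logarithmic bounds plus the comparison triangle as you propose.
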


\subsection{Log Forms to Holomorphic Forms} It is known that Flenner's criterion for holomorphic extension is optimal for general singularities; in particular, there is a variety $X$ with non-Du Bois singularities for which holomorphic extension fails in degree $p = \mathrm{codim}_X(\Sigma)-1$.  We consider then a weakening of holomorphic extension via \ref{equation inclusion holomorphic log}: if a differential form on $X_{\mathrm{reg}}$ extend with at worst log-poles, does it already extend holomorphically?  Of course, (\ref{equation inclusion holomorphic log}) is surjective whenever holomorphic extension holds in degree $p$, and so this holds for any variety with $p < \mathrm{codim}_X(\Sigma) - 1$.  We consider again what happens for $p = \mathrm{codim}_X(\Sigma) - 1$.  This generalizes the discussion in \S\ref{subsection proof of 1.2}. 

\begin{theorem} \label{theorem log forms to holomorphic forms}
    Let $X$ be a normal variety with singular locus $\Sigma$.  The inclusion morphism (\ref{equation inclusion holomorphic log}) is an isomorphism for $p = \mathrm{codim}_X(\Sigma) - 1$.
\end{theorem}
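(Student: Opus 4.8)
The natural plan is to argue by induction on $d:=\dim\Sigma$, isolating the degree $(p-1)$ input (which is governed by Flenner's criterion) and reducing the base case to van Straten--Steenbrink. Fix a log-resolution $\pi:\widetilde X\to X$ with $E=\pi^{-1}(\Sigma)$ and set
$$\mathcal Q:=\pi_*\Omega_{\widetilde X}^p(\log E)\big/\pi_*\Omega_{\widetilde X}^p,\qquad p=\mathrm{codim}_X(\Sigma)-1\ge 1,$$
a coherent sheaf supported on $\Sigma$ and independent of $\pi$; since $\pi_*\Omega_{\widetilde X}^p\subseteq\pi_*\Omega_{\widetilde X}^p(\log E)\subseteq j_*\Omega_U^p=\Omega_X^{[p]}$, the theorem says exactly that $\mathcal Q=0$, and this may be checked at closed points. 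For the base case, where $\Sigma$ is finite and $p=\dim X-1$, the differentiation morphism $d:j_*\Omega_U^{\dim X-1}/\pi_*\Omega_{\widetilde X}^{\dim X-1}\to j_*\omega_U/\pi_*\omega_{\widetilde X}(E)$ is injective by \cite[Cor.~1.4]{van1985extendability}; since $d\bigl(\Omega_{\widetilde X}^{\dim X-1}(\log E)\bigr)\subseteq\Omega_{\widetilde X}^{\dim X}(\log E)=\omega_{\widetilde X}(E)$, the map $d$ sends the subsheaf $\pi_*\Omega_{\widetilde X}^{\dim X-1}(\log E)\subseteq j_*\Omega_U^{\dim X-1}$ into $\pi_*\omega_{\widetilde X}(E)$, hence annihilates the image of $\mathcal Q$, and injectivity forces $\mathcal Q=0$.

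For the inductive step assume $d\ge 1$ and the theorem for singular loci of dimension $<d$; it suffices to prove $\mathcal Q_x=0$ for each closed point $x\in\Sigma$. Choose a general hyperplane $L$ through $x$, put $H=X\cap L$, and (as $\mathcal Q$ is $\pi$-independent) choose $\pi$ adapted to $L$ so that $\widetilde H:=\pi^{-1}(H)$ is smooth and $\widetilde H\to H$ is a log-resolution with exceptional divisor $E_H=E|_{\widetilde H}$. For general such $L$, $H$ is normal with $\mathrm{Sing}(H)=\Sigma\cap H$ of dimension $d-1$ and $\mathrm{codim}_H(\mathrm{Sing}(H))=\mathrm{codim}_X(\Sigma)$, so $p=\mathrm{codim}_H(\mathrm{Sing}(H))-1$ and the inductive hypothesis applies to $H$: the right vertical arrow of (\ref{equation conormal form diagram}) is an isomorphism. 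The rows of (\ref{equation conormal form diagram}) are the pushforwards under $\pi|_{\widetilde H}$ of the ordinary and logarithmic conormal sequences for $\widetilde H\subseteq\widetilde X$, hence exact in the middle, and for $L$ general enough the relevant $R^1\pi_*$-corrections are killed by a general section of $\mathcal O_X(1)$, so the middle terms are indeed $\mathcal O_H\otimes\pi_*\Omega_{\widetilde X}^p$ and $\mathcal O_H\otimes\pi_*\Omega_{\widetilde X}^p(\log E)$. The left vertical arrow is an isomorphism too, since $p-1<\mathrm{codim}_H(\mathrm{Sing}(H))-1$ forces $(\pi|_{\widetilde H})_*\Omega_{\widetilde H}^{p-1}=\Omega_H^{[p-1]}$ by Flenner's criterion \cite[Theorem]{flenner88extendability}, and then $(\pi|_{\widetilde H})_*\Omega_{\widetilde H}^{p-1}\subseteq(\pi|_{\widetilde H})_*\Omega_{\widetilde H}^{p-1}(\log E_H)\subseteq\Omega_H^{[p-1]}$ collapses. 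A diagram chase in (\ref{equation conormal form diagram}) using exactness in the middle and the two outer isomorphisms then identifies $\coker$ of the middle vertical, namely $\mathcal O_H\otimes\mathcal Q=\mathcal Q/s\mathcal Q$ with $s\in\mathfrak m_x$ cutting out $L$, with the discrepancy between the two connecting homomorphisms; granting that this vanishes, Nakayama gives $\mathcal Q_x=0$.

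The main obstacle is exactly this discrepancy. Because $R^1\pi_*(N_{H|X}^*\otimes\Omega_{\widetilde H}^{p-1})$ need not vanish --- it is nonzero already for ADE surface singularities --- the rows of (\ref{equation conormal form diagram}) are not right-exact and the five lemma does not apply; the chase yields $\coker(\text{middle vertical})\cong\im\delta\cap\ker\theta$, where $\delta:(\pi|_{\widetilde H})_*\Omega_{\widetilde H}^p\to R^1\pi_*(N_{H|X}^*\otimes\Omega_{\widetilde H}^{p-1})$ is the connecting map of the conormal sequence and $\theta$ compares it with its logarithmic analogue. Near the generic point of each component of $\mathrm{Sing}(H)$ this term is zero --- there $H\cong S\times\mathbb A^{d-1}$ with $S$ having an isolated singularity, $N_{H|X}^*$ is trivial, and the conormal sequence splits, so $\delta=0$ --- but controlling $\im\delta\cap\ker\theta$ over the deeper strata of $\Sigma$ is the crux, and, as with Graf's appeal to the negativity lemma for $p=1$ in \cite{graf2013optimal}, it seems to need input beyond diagram chasing; the most promising route is to feed in the Hodge-theoretic support estimates for $IC_X$ and for the logarithmic (mixed) Hodge module established earlier in the paper. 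The remaining points --- choosing $L$ general enough at $x$ for the $R^1\pi_*$-identifications while keeping $x\in L$, and an $S_2$-type estimate on $\pi_*\Omega_{\widetilde X}^p$ ruling out zero-dimensional embedded pieces of $\mathcal Q$ --- are routine.
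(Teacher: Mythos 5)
Your skeleton matches the paper's: induction on $\dim\Sigma$, base case from van Straten--Steenbrink, restriction to a general hyperplane, Flenner's criterion for the degree-$(p-1)$ piece, and an appeal to the negativity lemma lurking in the background. But the proof is not complete: the step you flag as ``the crux'' --- controlling $\im\delta\cap\ker\theta$, i.e.\ showing the middle vertical of (\ref{equation conormal form diagram}) is surjective despite the failure of right-exactness of the rows --- is exactly the content of the theorem beyond Flenner, and you leave it as ``granting that this vanishes.'' Your instinct that the missing input is Hodge-theoretic is right, but the mechanism matters. The paper replaces the three-term conormal rows by the long exact sequences of cohomology sheaves of $\mathrm{gr}^F_{-p}\mathrm{DR}$ applied to the restricted Hodge module sequence (\ref{equation conormal cutdown}), for both $\mathcal{IC}_X$ and the logarithmic module $\mathcal N_0$. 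In that picture the obstruction to surjectivity of $\mathscr O_H\otimes\pi_*\Omega^p_{\widetilde X}\to(\pi|_H)_*\Omega^p_{\widetilde H}$ is the term $\mathscr H^{-(n-p)+1}\mathrm{gr}^F_{-p+1}\mathrm{DR}(\mathcal{IC}_{X\cap H})$, which vanishes for support-dimension reasons precisely when $p=\mathrm{codim}_X(\Sigma)-1$; this is where the perverse support condition (\ref{equation perverse support}) does the work that your $R^1\pi_*$ bookkeeping cannot.

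Even with that vanishing, the paper does not conclude by a five-lemma chase or by Nakayama at a point (note also that a hyperplane forced through a fixed $x\in\Sigma$ is no longer general, which strains your genericity requirements). Instead, the second half of the argument is Graf's iterated contraction: given $\alpha\in H^0(\widetilde X,\Omega^p_{\widetilde X}(\sum m_iE_i))$ with $m_i\in\{0,1\}$, one uses the surjectivity just established plus the inductive hypothesis on $\widetilde H$ to manufacture, if some $m_j=1$, a nonzero section of $\mathscr O(\sum m_iE_i)|_{E_j}$, contradicting the negativity lemma \cite[Proposition 7.5]{greb2010extension}. So the negativity lemma, which you set aside as special to $p=1$, is in fact used in every degree; what is special to the Hodge module framework is the surjectivity statement that makes the contraction argument run. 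To repair your write-up you would need to (i) prove the vanishing of the connecting-map target via the Hodge module support conditions rather than postulate it, and (ii) supply the log-to-holomorphic descent step, which does not follow from $\mathscr O_H\otimes\mathcal Q=0$ alone.
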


\subsection{Holomorphic Extension for $m$-Du Bois Singularities} Du Bois singularities are of fundamental interest to algebraic geometers as a general class of singularities for which deformation theory and Hodge theory can be studied.  It is interesting to consider $m$-Du Bois singularities, the class of singularities which restricts the higher graded pieces of the Du Bois complex.

We say that a normal complex algebraic variety $X$ has \textit{weakly} (or \textit{pre}-) $m$-Du Bois singularities if the cohomology sheaves $\mathscr H^j\underline \Omega_X^p = 0$ for $j > 0$ and $0 \le p \le m$, where $\underline \Omega_X^p : = \mathrm{gr}^F_p\underline \Omega_X^\bullet[p]$.  Alternatively, $X$ has weakly $m$-Du Bois singularities if the natural map $$\mathscr H^0\underline \Omega_X^p \to \underline \Omega_X^p$$ is a quasi-isomorphism.  This notion has been studied in \cite{shen2023k} and is a generalization of the notion of \textit{$m$-Du Bois singularities} appearing in \cite{mustata2021bois}, \cite{jung2021higher}, and \cite{mustata2021hodge}.

An interesting question is then to what extent Theorem \ref{theorem holomorphic extension intro} can be improved for weakly $m$-Du Bois singularities.  For hypersurfaces, the answer is known: a variety $X$ with hypersurface singularities satisfies holomorphic extension in degree $\dim X$ if $X$ is 1-Du Bois, which means that $X$ is Du Bois and the natural map $$\Omega_X^1 \xrightarrow{\sim} \underline \Omega_X^1$$ is a quasi-isomorphism, where $\Omega_X^1$ is the sheaf of K\"ahler 1-forms.  This seems to be rather special for hypersurface (or more generally complete intersection) singularities, as we can write down a Cohen-Macaulay variety with weakly $m$-Du Bois singularities and $k > 0$ in any dimension for which Theorem \ref{theorem holomorphic extension intro} is still optimal (see Example \ref{example 1-Du Bois not Rational}).  The algebraic properties of the sheaf $\mathscr H^0 \underline \Omega_X^p$ appear to be independent of the vanishing --- or non-vanishing --- of the sheaves $\mathscr H^j\underline \Omega_X^p$.  Instead, we consider what happens when $\mathscr H^0\underline \Omega_X^p$ is already reflexive.  One result in this direction is the following:

\begin{theorem}
Let $X$ be a normal variety with weakly $m$-Du Bois singularities, and let $\pi:\widetilde X \to X$ be a resolution of singularities.  If $\mathscr H^0\underline \Omega_X^k$ is reflexive, then $\pi_*\Omega_{\widetilde X}^{k + 1}$ is reflexive.
\end{theorem}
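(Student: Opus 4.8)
The plan is to reduce the statement---by way of Theorem~\ref{theorem logarithmic extension intro} and the derived extension criterion of Proposition~\ref{proposition kebekus criterion complexes}---to a family of codimension bounds on the higher direct images of $\pi_*$, and then to extract those bounds from the reflexivity of $\mathscr H^0\underline\Omega_X^k$. Concretely: weakly $m$-Du Bois singularities are in particular Du Bois, so by Theorem~\ref{theorem logarithmic extension intro} one has $\pi_*\Omega_{\widetilde X}^{k+1}(\log E)\cong\Omega_X^{[k+1]}$ for any log-resolution $\pi\colon\widetilde X\to X$ with exceptional divisor $E$, a reflexive sheaf; since the conclusion is independent of the resolution we may assume $\pi$ is a log-resolution. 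Then $\mathscr F:=\pi_*\Omega_{\widetilde X}^{k+1}$ is a torsion-free subsheaf of $\Omega_X^{[k+1]}$ agreeing with $\Omega^{k+1}_{X_{\mathrm{reg}}}$ over the regular locus, hence it is reflexive if and only if it satisfies $S_2$, i.e.\ if and only if its sections extend uniquely across every closed subset of $X$ of codimension at least $2$. It therefore suffices to prove this extension property for $\mathscr F$.

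To test it I would use the derived criterion. Fixing a local embedding $X\subset Y$ into a complex manifold and writing $n=\dim X$, set $K:=\mathbf R\pi_*\Omega_{\widetilde X}^{k+1}\in\mathrm D_{\mathrm{coh}}^b(\mathscr O_X)$, so that $\mathscr H^jK=0$ for $j<0$ and $\mathscr H^0K=\mathscr F$. Grothendieck duality along $\pi$, together with the identification $\mathscr Hom_{\mathscr O_{\widetilde X}}(\Omega_{\widetilde X}^{k+1},\omega_{\widetilde X})\cong\Omega_{\widetilde X}^{\,n-k-1}$ on the smooth $n$-fold $\widetilde X$, gives
\[
\mathbf R\mathscr Hom_{\mathscr O_Y}(K,\omega_Y^\bullet)\ \cong\ \mathbf R\pi_*\Omega_{\widetilde X}^{\,n-k-1}[n],\qquad\text{hence}\qquad R^{\ell}\mathscr Hom_{\mathscr O_Y}(K,\omega_Y^\bullet)\ \cong\ R^{\ell+n}\pi_*\Omega_{\widetilde X}^{\,n-k-1}.
\]
By Proposition~\ref{proposition kebekus criterion complexes}, the sections of $\mathscr F$ extend across a closed $A\subset X$ with $\dim A\le n-2$ as soon as $\dim\bigl(A\cap\mathrm{Supp}\,R^{i}\pi_*\Omega_{\widetilde X}^{\,n-k-1}\bigr)\le n-i-2$ for all $i$. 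For $i=0$ this is automatic, and for $i\ge 1$ the support lies in $\Sigma$, so the whole theorem comes down to the estimate
\begin{equation*}
\dim\mathrm{Supp}\,R^{i}\pi_*\Omega_{\widetilde X}^{\,n-k-1}\ \le\ n-i-2\qquad(i\ge 1).
\tag{$\dagger$}
\end{equation*}

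To establish $(\dagger)$ I would run the hypothesis through the Hodge-theoretic machinery already assembled. Since $\mathscr H^0\underline\Omega_X^k$ is reflexive it is torsion-free, so by functoriality of the Du Bois complex it embeds into $\pi_*\Omega_{\widetilde X}^k\subseteq\Omega_X^{[k]}$, and the hypothesis $\mathscr H^0\underline\Omega_X^k=\Omega_X^{[k]}$ forces equalities throughout; in particular holomorphic extension holds in degree $k$, but what one really wants is the corresponding statement at the level of the trivial mixed Hodge module $\mathbb Q_X^H$ (whose graded de Rham pieces are the $\underline\Omega_X^\bullet$). One then transports that to the complementary degree using the self-duality of $IC_X$---which relates the objects $K_p=\mathrm{gr}_{-p}^F\mathrm{DR}(IC_X)[p-n]$ for $p$ and $n-p$---together with the comparison morphisms among $\mathbb Q_X^H$, $IC_X$, and the logarithmic mixed Hodge module, and the support conditions for the $K_p'$ furnished by Theorem~\ref{theorem logarithmic extension intro} and \cite[\S9]{kebekus2021extending}; the mismatch between the Du Bois degree $k$ and the degree $n-k-1$ in $(\dagger)$ is absorbed through the residue (weight) sequences on $\widetilde X$ and its exceptional components and Theorem~\ref{theorem logarithmic extension intro} in the relevant degrees, using throughout that $X$ is weakly $m$-Du Bois so that the pertinent graded pieces of $\underline\Omega_X^\bullet$ are sheaves. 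As in the proof of Theorem~\ref{theorem holomorphic extension intro}, the argument proceeds by induction on $\dim\Sigma$: a very general hyperplane section preserves both the weakly $m$-Du Bois property and the reflexivity of $\mathscr H^0\underline\Omega_X^k$ and cuts $\dim\Sigma$ down, while the base case $\dim\Sigma=0$ is covered by Theorem~\ref{theorem holomorphic extension intro} when $k+1<n$ and, for $k+1=n$, by the van Straten--Steenbrink differentiation argument recalled in \S\ref{subsection intro proof 1.2}.

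I expect the last step to be the main obstacle. The hypothesis controls only the single sheaf $\mathscr H^0\underline\Omega_X^k$, whereas $(\dagger)$ is a codimension bound on the higher direct images $R^{i}\pi_*\Omega_{\widetilde X}^{\,n-k-1}$ for $i\ge 1$; one must first promote ``$\mathscr H^0\underline\Omega_X^k$ is reflexive'' to a derived, Hodge-module-level support statement, and then carry it across the residue sequences with the correct degree shift and compatibly with the inductive reduction of $\dim\Sigma$. This is precisely where, as the introduction warns in connection with Theorem~\ref{theorem holomorphic extension intro}, a naive induction on $\dim\Sigma$ is insufficient and the intersection cohomology and logarithmic mixed Hodge modules have to be brought in.
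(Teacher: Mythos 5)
There are genuine gaps at the two load-bearing points of your outline. First, your reduction to $(\dagger)$ --- the bound $\dim\mathrm{Supp}\,R^{i}\pi_*\Omega_{\widetilde X}^{\,n-k-1}\le n-i-2$ for $i\ge 1$ --- replaces the theorem by a strictly stronger statement that the paper explicitly flags as the wrong target: testing extension on $K=\mathbf R\pi_*\Omega_{\widetilde X}^{k+1}$ is sufficient but far from necessary (already $R^1\pi_*\Omega^1_{\widetilde X}\ne 0$ for ADE surface singularities, which satisfy holomorphic extension in every degree), and nothing in your outline actually derives $(\dagger)$ from the hypothesis --- you acknowledge this step is the ``main obstacle.'' The paper instead tests extension on the direct summand $\mathrm{gr}_{-(k+1)}^F\mathrm{DR}(\mathcal{IC}_X)$ of $\mathbf Rf_*\Omega_{\widetilde X}^{k+1}$ coming from Saito's decomposition \eqref{equation saito decomposition}, whose cohomology sheaves discard exactly the torsion summands that make $(\dagger)$ unprovable; the statement proved is $\dim\mathrm{Supp}\,\mathscr H^{j}\mathrm{gr}^F_{-\dim\Sigma}\mathrm{DR}(\mathcal{IC}_X)\le-(j+2)$, which is then propagated under generic hyperplane cuts via the conormal sequence \eqref{equation conormal cutdown}, much as you suggest for the inductive step.

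Second, your base case fails. For isolated singularities the constraint $k=\mathrm{codim}_X(\Sigma)-1$ forces $k+1=n$, so the base case is precisely the reflexivity of $\pi_*\omega_{\widetilde X}$; the van Straten--Steenbrink injection $d:j_*\Omega_U^{n-1}/\pi_*\Omega_{\widetilde X}^{n-1}\to j_*\omega_U/\pi_*\omega_{\widetilde X}(E)$ proves extension in degree $n-1$, not $n$, and cannot deliver this. Moreover your base case never uses the hypothesis that $\mathscr H^0\underline\Omega_X^{n-1}$ is reflexive, and without it the conclusion is false: the cone over a K3 surface in Example \ref{example 1-Du Bois not Rational} is Du Bois with isolated singularities and is weakly $1$-Du Bois, yet $\pi_*\omega_{\widetilde X}$ is not reflexive. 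The paper's base case is exactly where the hypothesis does its work: weak Du Bois-ness identifies $\mathscr H^0\underline\Omega_X^{n-1}$ with the full complex $\mathbf R\pi_*\Omega_{\widetilde X}^{n-1}(\log E)(-E)$, so its reflexivity dualizes (Grothendieck duality together with Proposition \ref{proposition kebekus sheaf criterion}) to $\dim\mathrm{Supp}\,R^{j}\pi_*\Omega^1_{\widetilde X}(\log E)\le n-j-2$, in particular $R^{n-1}\pi_*\Omega^1_{\widetilde X}(\log E)=0$; the residue sequence and Hodge symmetry on the exceptional divisor then force $\pi_*\omega_{\widetilde X}\hookrightarrow\pi_*\omega_{\widetilde X}(E)$ to be an isomorphism, and Theorem \ref{theorem logarithmic extension intro} concludes. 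This dualize-and-take-residues step is the missing content of your middle paragraph, and without it the proposal does not close.
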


\subsection{Higher Rational Singularities and Reflexivity of $\Omega_X^p$} Let $X$ be a normal complex algebraic variety.  Following \cite{mustata2021bois} and \cite{jung2021higher}, we say that $X$ has $m$-Du Bois singularities if the natural map $\Omega_X^p \to \underline \Omega_X^p$ is a quasi-isomorphism for each $p$, where $\Omega_X^p$ is the sheaf of K\"ahler $p$-forms.  As a generalization of Du Bois singularities, this is very natural, as it comes from a morphism of complexes $\Omega_X^\bullet \to \underline \Omega_X^\bullet$.  However, this definition is rather restrictive: in the case of hypersurface singularities, this forces $\Omega_X^p$ to be reflexive for every $0 \le p \le k$ and restricts the codimension of the singularities.

Instead, one can consider what happens when we restrict the \textit{duals} of the higher Du Bois complexes.  We say that has \textit{weakly $m$-rational singularities} if $$\mathscr H^0\mathbb D_X(\underline \Omega_X^{n-p}) \xrightarrow{\sim} \mathbb D_X(\underline \Omega_X^{n-p})$$ for each $0 \le p \le k$, where $\mathbb D_X$ is the Grothendieck duality functor.  Note that there is a quasi-isomorphism $\mathbb D_X(\underline \Omega_X^n) \cong \mathbf R\pi_*\mathscr O_{\widetilde X}$, whence $0$-rational is the same as rational singularities for normal complex varieties.  This is a generalization of the \textit{$m$-rational} definition established for complete intersections, which requires the natural morphism $\Omega_X^p \xrightarrow{\sim} \mathbb D_X(\underline \Omega_X^{n-p})$ to be an isomorphism for $0 \le p \le k$.  

The $m$-rational property is very strong, as it eventually implies $\Omega_X^1$ is \textit{maximal Cohen-Macaulay}.  The MCM property has been extensively studied in the literature, and the MCM property for the module of K\"ahler differentials has been looked at in the case of hypersurfaces and when $\mathrm{pd}~\Omega_{X,x}^p < \infty$.  We give one new result using the theory of Hodge modules.

\begin{proposition}
If $X$ is an $n$-fold Gorenstein variety with at worst quotient singularities, then $\Omega_X^p$ is reflexive for some $1 \le p \le n$ if and only if $X$ is smooth.
\end{proposition}

\subsection{Hodge Theory of Weakly $m$-rational Singularities} As we mentioned above, the Du Bois complex is a Hodge-theoretic object arising from Deligne's mixed Hodge theory.  If $X$ is a proper variety, then the Hodge filtration is the one induced by the $E_1$-spectral sequence $$E_1^{p,q} = \mathbb H^q(X, \underline \Omega_X^p) \Rightarrow H^{p+q}(X, \mathbb C).$$ If $X$ is weakly $m$-Du Bois, we get a decomposition $$H^k(X, \mathbb C) \cong \bigoplus_{p + q = k} H^q(X, \mathscr H^0\underline \Omega_X^p).$$ In general, restricting the Du Bois complex does not affect the weight filtration on the cohomology, as $\underline \Omega_X^\bullet$ does not usually admit the structure of a Hodge module.  To get pure Hodge modules, we consider again $m$-rational definition.

\begin{theorem}
If $X$ is a normal and proper complex algebraic variety with weakly $m$-rational singularities, then the Hodge filtration on $H^k(X, \mathbb Q)$ induces a pure Hodge structure.
\end{theorem}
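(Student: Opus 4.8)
The plan is to compare $X$ with a resolution of singularities and to deduce purity of $H^k(X,\mathbb Q)$ from the fact that the cohomology of a smooth proper variety is pure, using the weakly $m$-rational hypothesis to control the comparison map on the graded pieces of the Hodge filtration. Fix a log-resolution $\pi\colon \widetilde X \to X$ with exceptional divisor $E$ and write $n = \dim X$. Since $X$ is proper, the filtered complex $(\underline\Omega_X^\bullet, F)$ induces an $E_1$-degenerate spectral sequence $E_1^{p,q} = \mathbb H^q(X,\underline\Omega_X^p) \Rightarrow H^{p+q}(X,\mathbb C)$ computing Deligne's Hodge filtration, so that $\mathrm{gr}_F^p H^k(X,\mathbb C) \cong \mathbb H^{k-p}(X,\underline\Omega_X^p)$; for the smooth proper variety $\widetilde X$ the analogous sequence is the Hodge--de Rham spectral sequence and recovers the classical Hodge decomposition. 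The morphism $\pi$ induces a morphism of mixed Hodge structures $\pi^*\colon H^k(X,\mathbb Q) \to H^k(\widetilde X,\mathbb Q)$ which, on the $p$-th graded piece of $F$, is the map $\mathbb H^{k-p}(X,\underline\Omega_X^p) \to \mathbb H^{k-p}(X,\mathbf R\pi_*\Omega_{\widetilde X}^p)$ induced by the canonical morphism $\underline\Omega_X^p \to \mathbf R\pi_*\Omega_{\widetilde X}^p$ of Du Bois functoriality.

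The key reduction is that it suffices to prove $\pi^*$ is \emph{injective} for every $k$: an injective morphism of mixed Hodge structures is strict, hence its source embeds as a sub-mixed-Hodge-structure of the target, and a sub-mixed-Hodge-structure of the pure structure $H^k(\widetilde X,\mathbb Q)$ is pure of weight $k$. To establish injectivity I would show that $\pi^*$ is injective on each $\mathrm{gr}_F^p$ of $H^k(-,\mathbb C)$ -- equivalently, that $\mathbb H^{k-p}(X,\underline\Omega_X^p) \to \mathbb H^{k-p}(X,\mathbf R\pi_*\Omega_{\widetilde X}^p)$ is injective -- which then propagates to $\pi^*$ itself because $F$ is a finite filtration and $\pi^*$ is strictly compatible with it.

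This is where the weakly $m$-rational hypothesis enters, through the Hodge-module objects $K_p, K_p'$ and Grothendieck duality. For $p \le m$, the condition $\mathscr H^0\mathbb D_X(\underline\Omega_X^{n-p}) \xrightarrow{\sim} \mathbb D_X(\underline\Omega_X^{n-p})$, dualized against the identification $\mathbb D_X(\underline\Omega_X^n)\cong\mathbf R\pi_*\mathscr O_{\widetilde X}$ and its higher-$p$ analogues, yields a higher Grauert--Riemenschneider vanishing $R^j\pi_*\Omega_{\widetilde X}^p = 0$ for $j>0$; combined with holomorphic extension in degree $p$ -- Theorem \ref{theorem holomorphic extension intro} in the range $p < \mathrm{codim}_X(\Sigma)$, and the support estimates for $K_p$ together with the logarithmic comparison of Theorem \ref{theorem log forms to holomorphic forms} beyond it -- this upgrades $\pi_*\Omega_{\widetilde X}^p = \Omega_X^{[p]} = \mathscr H^0\underline\Omega_X^p$ to a quasi-isomorphism $\underline\Omega_X^p \xrightarrow{\sim} \mathbf R\pi_*\Omega_{\widetilde X}^p$, so that $\mathrm{gr}_F^p(\pi^*)$ is an isomorphism for $p\le m$. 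By Serre duality on the proper variety $X$ one has $\mathrm{gr}_F^p H^k(X,\mathbb C)\cong \mathbb H^{n-k+p}(X,\mathbb D_X\underline\Omega_X^p)^\vee$, and the weakly $m$-rational condition controls $\mathbb D_X\underline\Omega_X^p$ for $p \ge n-m$ in the same manner, giving injectivity of $\mathrm{gr}_F^p(\pi^*)$ there as well. Thus $\mathrm{gr}_F^p(\pi^*)$ is injective for $p\le m$ and for $p\ge n-m$; when these ranges cover the relevant Hodge degrees -- in particular for $k\le m$ or $k\ge 2n-m$, and, for the remaining degrees, after an induction on $\dim\Sigma$ of the kind carried out in the proof of Theorem \ref{theorem holomorphic extension intro} -- this holds for all $p$, so $\pi^*$ is injective on $H^k(X,\mathbb C)$ and hence on $H^k(X,\mathbb Q)$, and $H^k(X,\mathbb Q)$ is pure.

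The main obstacle is the middle step: extracting from the dual condition $\mathscr H^{\ne 0}\mathbb D_X(\underline\Omega_X^{n-p}) = 0$ both the Grauert--Riemenschneider-type vanishing for $\Omega_{\widetilde X}^p$ and the extension statement $\pi_*\Omega_{\widetilde X}^p = \Omega_X^{[p]}$ outside Flenner's range -- precisely what the pure and mixed Hodge modules $K_p, K_p'$ and Theorems \ref{theorem holomorphic extension intro}--\ref{theorem log forms to holomorphic forms} are designed to supply. The careful bookkeeping of the dualization, together with the induction needed to reach the Hodge pieces in the ``middle'' range $m < p < n-m$ when $m$ is small, is the technical heart of the argument.
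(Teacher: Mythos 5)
Your overall strategy---prove purity by exhibiting $H^k(X,\mathbb Q)$ as a sub-mixed-Hodge-structure of $H^k(\widetilde X,\mathbb Q)$ via an injective $\pi^*$---is legitimate in principle, but the step that is supposed to make it work is false as stated. You claim that the weakly $m$-rational condition, dualized, yields ``a higher Grauert--Riemenschneider vanishing $R^j\pi_*\Omega_{\widetilde X}^p=0$ for $j>0$'' and hence a quasi-isomorphism $\underline\Omega_X^p\xrightarrow{\sim}\mathbf R\pi_*\Omega_{\widetilde X}^p$ for $p\le m$. Grothendieck duality applied to $\mathbb D_X(\underline\Omega_X^{n-p})$ controls complexes built from $\Omega_{\widetilde X}^p(\log E)$ (via the triangle in Theorem \ref{theorem Du Bois properties}\ref{theorem right triangle log zero Du Bois} and log duality), not $\mathbf R\pi_*\Omega_{\widetilde X}^p$ itself, and the passage from log forms to holomorphic forms at the level of \emph{higher} direct images is exactly what fails in general: the paper points out that $R^1\pi_*\Omega^1_{\widetilde X}\neq 0$ for ADE surface singularities, which are rational homology manifolds and hence weakly $m$-rational for all $m$. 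So for $p=1\le m$ your claimed vanishing is already false, and without it the map $\mathbb H^{k-p}(X,\underline\Omega_X^p)\to\mathbb H^{k-p}(\widetilde X,\Omega_{\widetilde X}^p)$ is only a Leray edge map whose injectivity you have not established (its kernel receives differentials from the nonvanishing $R^{j}\pi_*\Omega^p_{\widetilde X}$). The remaining machinery---the ``middle range'' $m<p<n-m$ and the induction on $\dim\Sigma$---is chasing a non-issue, since the theorem concerns $H^k$ for $k\le m$, where the only Hodge pieces that occur are $\mathrm{gr}_F^p$ with $0\le p\le k\le m$; the genuine gap sits precisely in that range.

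For comparison, the paper's proof of Theorem \ref{theorem k-rational hodge} never touches the resolution. It uses \cite[Theorem B]{shen2023k} to get the self-duality $\underline\Omega_X^p\cong\mathbb D_X(\underline\Omega_X^{n-p})$ for $p\le m$, so the $E_1$-degenerate spectral sequence $\mathbb H^q(X,\underline\Omega_X^p)\Rightarrow H^{p+q}(X,\mathbb C)$ simultaneously computes the Hodge filtration on $H^m(X,\mathbb C)$ (weights $\le m$ by properness) and, via Serre duality, on the dual mixed Hodge structure $H^{2n-m}(X,\mathbb Q(-n))^*$ (weights $\ge m$); the two constraints force purity of weight $m$. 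If you want to salvage your approach, the object to map into is not $H^k(\widetilde X,\mathbb Q)$ but the intersection cohomology $IH^k(X,\mathbb Q)$, i.e.\ you would need injectivity of $\mathbb H^{k-p}(X,\underline\Omega_X^p)\to\mathbb H^{k-p}(X,\mathrm{gr}^F_{-p}\mathrm{DR}(IC_X)[p-n])$, which is closer to what the weakly $m$-rational hypothesis actually provides---but that is a different argument from the one you wrote.
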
  

 \subsection{Acknowledgements} Part of this paper is contained in the author's PhD thesis.  I want to thank Benjamin Bakker and Nicolas Addington for conversations regarding this topic.  I want to thank Mihnea Popa, Wanchun Shen, and Duc Vo for comments on an early draft of this paper.  Finally, I thank Sung Gi Park, who independently proved Theorem \ref{theorem holomorphic extension intro} and Theorem \ref{theorem logarithmic extension intro} in \cite{park2023duBois}, for conversations on this topic. 

 \section{The Du Bois Complex}

 \subsection{Notation}\label{subsection notation} Throughout, we let $X$ be a normal complex algebraic variety of dimension $n$ with singular locus $\Sigma$. We will use $\pi:\tilde X \to X$ to denote a \textit{projective} resolution of singularities.

If $\pi:\tilde X \to X$ is a log-resolution of singularities with exceptional divisor $E$, we let $\Omega_{\tilde X}^p(\log E)$ be the sheaf of log $p$-forms, and we let $$\Omega_{\tilde X}^p(\log E)(-E) : = \Omega_{\tilde X}^p(\log E)\otimes \mathscr I_E.$$ For any local embedding $X|_V \hookrightarrow Y$ into a smooth complex variety, we define the sheaf of K\"ahler differentials $$\Omega_X^1|_V : = \Omega_Y^1/\langle df_1,df_2,...,df_m\rangle,$$ where $\Omega_Y^1$ is the sheaf of holomorphic 1-forms on $Y$ and $f_1,...,f_m$ are some defining equations for the open set $V$.  We let $\Omega_X^p : = \wedge^p\Omega_X^1$.

If $f:X' \to X$ is a morphism, we write $\mathbf Rf_*$ for the derived pushforward, and $\mathbf R\mathscr Hom$ for the derived hom.  We will use $H^k$ for the cohomology of a sheaf, $\mathscr H^k$ for the cohomology sheaf of a complex, and $\mathbb H^k$ for hypercohomology of a complex.  Finally, we let $\mathbb D_X(-): = \mathbf R\mathscr Hom_{\mathscr O_X}(-, \omega_X^\bullet)$ be the Grothendieck duality functor.
\subsection{The Du Bois Complex}

Let $X$ be a complex algebraic variety.  The Du Bois complex $(\underline \Omega_X^\bullet, F)$ is an object in the derived category of filtered complexes of constructible sheaves $\mathscr D^b_{\mathrm{filt}}(X)$, generalizing the holomorphic de Rham complex for algebraic varieties over $\mathbb C$.  We denote its graded pieces by $\underline \Omega_X^p : = \mathrm{gr}^F_p\underline \Omega_X^\bullet[p]$, which are defined in the bounded derived category of coherent sheaves.  Studied by Du Bois \cite{du1981complexe} from Deligne's construction of the mixed Hodge structure \cite{deligne1974theorie}, the Du Bois complex is constructed by simplicial or cubical hyperresolutions of $X$.  We will not need this construction in this paper but will only use its formal consequences.  The interested reader can consult \cite{peters2008mixed} for a good treatment of this construction.

\begin{theorem} \label{theorem Du Bois properties}
For $X$ a complex scheme of finite type and $\underline \Omega_X^{\bullet}$ its Du Bois complex, we have

\begin{enumerate}[label=\normalfont(\roman*)]
    \item \cite[\S 3.2]{du1981complexe} $\underline \Omega_X^{\bullet} \cong_{\mathrm{qis}} \mathbb C_X$.
    
    \item \cite[(3.2.1)]{du1981complexe}If $f:Y \to X$ is a proper morphism of finite type schemes, then there is a morphism $f^*: \underline \Omega_X^{\bullet} \to \mathbf Rf_*\underline \Omega_Y^{\bullet}$ in  $\mathscr D_{\mathrm{filt}}^b(X)$.
    
    \item\label{theorem Du Bois open restriction}\cite[3.10 Corollaire]{du1981complexe} If $U \subset X$ is an open subscheme then $\underline \Omega_X^{\bullet}|_U \cong_{\mathrm{qis}} \underline \Omega_U^{\bullet}$.

    \item \label{theorem property dim support Du Bois} \cite[V, 3.6]{guillen2006hyperresolutions} $\dim \mathrm{Supp}~\mathscr H^j\underline \Omega_X^p \le \dim X - j$ for $0 \le j \le \dim X$.
    
    \item \label{theorem Du Bois properties natural map kahler} \cite[\S 3.2]{du1981complexe} There is a natural morphism $\Omega_X^{\bullet} \to \underline \Omega_X^{\bullet}$, where $\Omega_X^{\bullet}$ is the complex of K\"ahler differentials.  Moreover, this morphism is a quasi-isomorphism if $X$ is smooth.
    
    \item\label{theorem Du Bois base point free} \cite[Proposition 2.6]{kovacsdu2011}, \cite[Lemma 3.2]{shen2023k} If $H \subset X$ is a general member of a basepoint free linear system, then there is an exact triangle $$\underline \Omega_H^{p-1} \otimes_{\mathscr O_H}^{\mathbb L} \mathscr O_H(-H) \to \underline \Omega_X^p\otimes_{\mathscr O_X}^{\mathbb L} \mathscr O_H \to \underline \Omega_H^p \xrightarrow{+1}.$$ In particular, $\underline \Omega_X^0\otimes \mathscr O_H \cong_{\mathrm{qis}} \underline \Omega_H^0$.
    
    \item \label{theorem right triangle resolution exceptional divisor} \cite[4.11 Proposition]{du1981complexe} There is an exact triangle $$\underline \Omega_X^p \to \underline \Omega_{\Sigma}^p\oplus \mathbf R\pi_*\Omega_{\tilde X}^p \to \mathbf R\pi_*\underline \Omega_E^p \xrightarrow{+1}$$ where $\pi:\tilde X \to X$ is a resolution of singularities, $\Sigma \subset X$ is the singular locus, and $E = \pi^{-1}(\Sigma)$. In particular, $\underline \Omega_X^n \cong_{\mathrm{qis}} \mathbf R\pi_*\omega_{\tilde X} \cong_{\mathrm{qis}} \pi_*\omega_{\tilde X}$. 
    
    \item \cite[4.5 Th\'eor\`eme]{du1981complexe} If $X$ is a proper variety, there is a spectral sequence $$E_1^{p,q} : = \mathbb H^q(X, \underline \Omega_X^p)\Rightarrow H^{p+q}(X, \mathbb C)$$ which degenerates at $E_1$ for every $p,q$.  Moreover, the filtration induced by this degeneration is equal to the Hodge filtration on the underlying mixed Hodge structure.

    \item\label{theorem right triangle log zero Du Bois} \cite[\S 3.C]{kovacsdu2011} If $\pi:\tilde X \to X$ is a log-resolution of singularities with exceptional divisor $E$, there is a right triangle $$\mathbf R\pi_*\Omega_{\tilde X}^p(\log E)(-E) \to \underline \Omega_X^p \to \underline \Omega_\Sigma^p \xrightarrow{+1}$$ which is independent of the choice of $\pi$.  
\end{enumerate}
\end{theorem}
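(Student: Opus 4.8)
We sketch a proof of the last part, (ix); the other parts are the standard formal properties of the Du Bois complex, recalled from the cited references. The plan is to deduce (ix) from the resolution-square triangle of part (vii), namely $\underline\Omega_X^p\to\underline\Omega_\Sigma^p\oplus\mathbf R\pi_*\Omega_{\tilde X}^p\to\mathbf R\pi_*\underline\Omega_E^p\xrightarrow{+1}$, together with a local computation on the simple normal crossings divisor $E$.

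Write $E=\sum_{i=1}^r E_i$ with each $E_i$ smooth and each intersection $E_I:=\bigcap_{i\in I}E_i$ smooth, and let $a_k\colon E^{(k)}:=\coprod_{|I|=k+1}E_I\to E$ be the natural finite map. The first --- and main --- step is to establish exactness of the Čech-type sequence of de~Rham restriction maps
\begin{equation*}
0\to\Omega_{\tilde X}^p(\log E)(-E)\to\Omega_{\tilde X}^p\to a_{0*}\Omega_{E^{(0)}}^p\to a_{1*}\Omega_{E^{(1)}}^p\to\cdots .
\end{equation*}
This can be checked in local analytic coordinates in which $E=\{x_1\cdots x_k=0\}$: expanding every term along the free generators $dx_S$ ($|S|=p$) splits the sequence as a direct sum, over such $S$, of augmented Čech--Koszul complexes $\mathscr O\to\bigoplus_i\mathscr O/(x_i)\to\bigoplus_{i<j}\mathscr O/(x_i,x_j)\to\cdots$ attached to the regular sequence $(x_i)_{i\le k}$; such a complex has cohomology only in degree $0$, equal to $(x_1\cdots x_k)$, and matching this with the coefficient ideal of $\Omega_{\tilde X}^p(\log E)(-E)$ in the $dx_S$-direction identifies the kernel and yields exactness. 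The second step is that the smooth varieties $E_I$ form a proper hypercover of $E$, so that $\underline\Omega_E^p\cong_{\mathrm{qis}}\bigl[a_{0*}\Omega_{E^{(0)}}^p\to a_{1*}\Omega_{E^{(1)}}^p\to\cdots\bigr]$ in degrees $0,1,\dots$, compatibly with the restriction $\Omega_{\tilde X}^p\to\underline\Omega_E^p$ of part (vii). Combining the two steps yields a functorial quasi-isomorphism $\Omega_{\tilde X}^p(\log E)(-E)\cong_{\mathrm{qis}}\mathrm{fib}\bigl(\Omega_{\tilde X}^p\to\underline\Omega_E^p\bigr)$ and hence, after applying $\mathbf R\pi_*$, a quasi-isomorphism $\mathbf R\pi_*\Omega_{\tilde X}^p(\log E)(-E)\cong_{\mathrm{qis}}\mathrm{fib}\bigl(\mathbf R\pi_*\Omega_{\tilde X}^p\to\mathbf R\pi_*\underline\Omega_E^p\bigr)$.

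To finish, apply the octahedral axiom to the composite $\underline\Omega_X^p\to\underline\Omega_\Sigma^p\oplus\mathbf R\pi_*\Omega_{\tilde X}^p\xrightarrow{\mathrm{pr}}\underline\Omega_\Sigma^p$, where the first arrow has cone $\mathbf R\pi_*\underline\Omega_E^p$ (part (vii)), the projection has cone $\mathbf R\pi_*\Omega_{\tilde X}^p[1]$, and the composite is the canonical restriction $\underline\Omega_X^p\to\underline\Omega_\Sigma^p$. The resulting distinguished triangle $\mathbf R\pi_*\Omega_{\tilde X}^p\to\mathbf R\pi_*\underline\Omega_E^p\to\mathrm{cone}(\underline\Omega_X^p\to\underline\Omega_\Sigma^p)\xrightarrow{+1}$, compared with the triangle from the preceding paragraph, identifies $\mathrm{fib}(\underline\Omega_X^p\to\underline\Omega_\Sigma^p)$ with $\mathbf R\pi_*\Omega_{\tilde X}^p(\log E)(-E)$, which is exactly the asserted triangle. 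Independence of the choice of log-resolution then follows by a standard dominating argument: for $\mu\colon\tilde X'\to\tilde X$ a further log-resolution of the pair $(\tilde X,E)$ one has $\mathbf R\mu_*\Omega_{\tilde X'}^p(\log E')(-E')\cong_{\mathrm{qis}}\Omega_{\tilde X}^p(\log E)(-E)$ (a standard logarithmic comparison statement for the ideal-twisted log complex), so the triangles built from $\pi$ and from $\pi\circ\mu$ agree compatibly with the maps to $\underline\Omega_X^p$.

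I expect the main obstacle to be the first step --- the local Čech--Koszul identification of $\Omega_{\tilde X}^p(\log E)(-E)$ as the kernel, and the vanishing of the higher terms of that complex --- together with the bookkeeping needed to upgrade the resulting quasi-isomorphism $\Omega_{\tilde X}^p(\log E)(-E)\cong_{\mathrm{qis}}\mathrm{fib}(\Omega_{\tilde X}^p\to\underline\Omega_E^p)$ to a functorial one, since it is functoriality that makes the octahedral step legitimate; the remainder is formal homological algebra.
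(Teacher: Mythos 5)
The paper offers no proof of this theorem: it is a list of background facts quoted verbatim from the cited sources, and the text explicitly says only the formal consequences of the Du Bois construction will be used. Your derivation of item (ix) from item (vii) is correct and is essentially the argument of the cited reference \cite[\S 3.C]{kovacsdu2011}: the local identification of $\Omega_{\tilde X}^p(\log E)(-E)$ as the kernel of the augmented Mayer--Vietoris complex $\Omega_{\tilde X}^p\to a_{0*}\Omega_{E^{(0)}}^p\to a_{1*}\Omega_{E^{(1)}}^p\to\cdots$ (whose positive-degree part computes $\underline\Omega_E^p$ for the snc divisor $E$), followed by the octahedral comparison with the blow-up triangle, is exactly how that triangle is obtained there, so there is nothing to correct.
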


\subsection{Du Bois, $m$-Du Bois, and $m$-rational Singularities} Let $X$ be a complex algebraic variety.  We review classes of singularities which are associated to the complexes $\underline \Omega_X^p$. 

\begin{definition}
Let $X$ be a complex algebraic variety.  

\begin{enumerate}[label=\normalfont(\roman*)]
    \item We say that $X$ has \textit{Du Bois singularities} if the natural map $\mathscr O_X \to \underline \Omega_X^0$ is a quasi-isomorphism.  
    \item We say that $X$ has \textit{$m$-Du Bois singularities} if the natural map $\Omega_X^p \to \underline \Omega_X^p$ is a quasi-isomorphism for each $0 \le p \le m$.

    \item We say that $X$ has \textit{$m$-rational singularities} if the natural map $\Omega_X^p \to \mathbb D_X(\underline \Omega_X^{n-p})$ is a quasi-isomorphism for each $0 \le p \le m$.

    \item We say that $X$ has \textit{weakly $m$-Du Bois singularities} if the natural map $\mathscr H^0\underline \Omega_X^p \to \underline \Omega_X^p$ is a quasi-isomorphism for each $0 \le p \le m$.  

    \item We say that $X$ has \textit{weakly $m$-rational singularities} if the natural map $\mathscr H^0\mathbb D_X(\underline \Omega_X^{n-p}) \to \mathbb D_X(\underline \Omega_X^{n-p})$ for each $0 \le p \le m$.
\end{enumerate}
\end{definition}

Here are some general properties of (weakly) $m$-Du Bois (resp. (weakly) $m$-rational singularities).

\begin{itemize}
    \item If $H$ is a general member of a basepoint linear system of a variety $X$ with (weakly) $m$-Du Bois singularities, then $H$ also has (weakly) $m$-Du Bois singularities (resp. (weakly) $m$-rational singularities) \cite[Theorem A, Corollary 3.3]{shen2023k}.

    \item Rational and log-canonical singularities are Du Bois, see \cite[Corollary 2.6]{kovacs1999rational}, \cite[5.4. Theorem]{saito2000mixed}, and \cite[Theorem 1.4]{kollar2010log}.

    \item If $X$ is a normal variety and $f:Y \to X$ is a finite dominant map from a variety $Y$ with rational and weakly $m$-Du Bois singularities, then $X$ also has rational \cite[Theorem 1]{kovacs2000characterization} and weakly $m$-Du Bois (resp. weakly $m$-rational) singularities \cite[Proposition 4.2]{shen2023k}.

    \item If $X$ has simple normal crossing singularities, then $X$ is weakly $m$-Du Bois since $$\underline \Omega_X^p \cong \Omega_X^p/\mathrm{tor}$$ for each $0 \le p \le n$.

    \item If $X$ has rational singularities, then $\mathscr H^0\underline \Omega_X^p \cong \Omega_X^{[p]}$ for each $p$, which follows from \cite{kebekus2021extending} and \cite{huber14}.  In particular, a complex algebraic variety $X$ with rational singularities is weakly $m$-Du Bois (resp. $m$-rational) if and only if $\Omega_X^{[p]} \to \underline \Omega_X^p$ (resp. $\Omega_X^{[p]} \to \mathbb D_X(\underline \Omega_X^{n-p})$) is a quasi-isomorphism for each $0 \le p \le k$.
\end{itemize}

There is a nice characterization due to Schwede.  For $X$ a reduced and separated complex scheme of finite type, there exists a (local) embedding $\iota:X \to Y$ of $X$ into a smooth scheme $Y$.  Let $\pi:\widetilde Y \to Y$ be an embedded resolution of $X$ which is an isomorphism outside of $X$, and let $\overline X = \pi^{-1}(X)_{\mathrm{red}}$ be the reduced preimage.  

\begin{proposition}\label{proposition schwede criterion} \cite[Theorem 4.6]{schwede2007simple} A complex algebraic variety $X$ has Du Bois singularities if and only if the natural map $\mathscr O_X \to \mathbf R\pi_*\mathscr O_{\overline X}$ is a quasi-isomorphism.   
\end{proposition}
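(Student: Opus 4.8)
The plan is to deduce the proposition from the Mayer--Vietoris (descent) triangle for the Du Bois complex followed by a formal cancellation. Since $\underline\Omega_X^0$, the complex $\mathbf R\pi_*\mathscr O_{\overline X}$, and the natural maps between them are all compatible with restriction to open subsets (Theorem \ref{theorem Du Bois properties}\ref{theorem Du Bois open restriction}), the assertion is local on $X$, so we may assume that $\iota\colon X\hookrightarrow Y$ is a global closed embedding into a smooth variety. Write $\overline X=\pi^{-1}(X)_{\mathrm{red}}$, which by the definition of an embedded resolution is a reduced simple normal crossings divisor in $\widetilde Y$, and let $\rho\colon\overline X\to X$ be the morphism induced by $\pi$, so that $\pi|_{\overline X}=\iota\circ\rho$.

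First I would invoke the descent triangle for the abstract blow-up square $(\overline X\hookrightarrow\widetilde Y,\ X\hookrightarrow Y,\ \pi)$ --- a square in which $\iota$ is a closed immersion, $\pi$ is proper and an isomorphism over $Y\setminus X$, and $\overline X=\pi^{-1}(X)_{\mathrm{red}}$ --- and take its zeroth graded piece, obtaining an exact triangle
\begin{equation*}
\underline\Omega_Y^0\xrightarrow{\ (\pi^*,\,\iota^*)\ }\mathbf R\pi_*\underline\Omega_{\widetilde Y}^0\oplus\underline\Omega_X^0\xrightarrow{\ (\sigma,\,\tau)\ }\mathbf R\pi_*\underline\Omega_{\overline X}^0\xrightarrow{+1},
\end{equation*}
in which $\tau$ is, up to sign, the functorial pullback $\rho^*\colon\underline\Omega_X^0\to\mathbf R\rho_*\underline\Omega_{\overline X}^0$ of Theorem \ref{theorem Du Bois properties}(ii). (The existence of this triangle, with these maps, is one of the standard formal consequences of the cubical hyperresolution construction of $\underline\Omega_X^\bullet$; see \cite{du1981complexe}, \cite{guillen2006hyperresolutions}, \cite{peters2008mixed}.) Then I would identify the terms: $Y$ and $\widetilde Y$ are smooth, so $\underline\Omega_Y^0\cong\mathscr O_Y$ and $\underline\Omega_{\widetilde Y}^0\cong\mathscr O_{\widetilde Y}$; since $Y$ is smooth it has rational singularities, so the canonical map $\mathscr O_Y\to\mathbf R\pi_*\mathscr O_{\widetilde Y}$ is an isomorphism and under it $\pi^*$ becomes $\mathrm{id}_{\mathscr O_Y}$; and $\overline X$ has simple normal crossings singularities, hence is Du Bois, so $\underline\Omega_{\overline X}^0\cong\mathscr O_{\overline X}$. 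The triangle therefore becomes
\begin{equation*}
\mathscr O_Y\xrightarrow{\ (\mathrm{id},\,\iota^*)\ }\mathscr O_Y\oplus\underline\Omega_X^0\xrightarrow{\ (\sigma,\,\tau)\ }\mathbf R\pi_*\mathscr O_{\overline X}\xrightarrow{+1}.
\end{equation*}

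The cancellation is now purely formal: the matrix $\left(\begin{smallmatrix}\mathrm{id}&0\\ \iota^*&\mathrm{id}\end{smallmatrix}\right)$ is an automorphism of $\mathscr O_Y\oplus\underline\Omega_X^0$ carrying the split inclusion of the first summand to $(\mathrm{id},\iota^*)$, so it yields an isomorphism of the displayed triangle with the split triangle $\mathscr O_Y\to\mathscr O_Y\oplus\underline\Omega_X^0\to\underline\Omega_X^0\xrightarrow{+1}$. Chasing this isomorphism of triangles gives $\mathbf R\pi_*\mathscr O_{\overline X}\cong\underline\Omega_X^0$ and, more to the point, shows that the component $\tau$ --- equivalently, the functorial map $\rho^*\colon\underline\Omega_X^0\to\mathbf R\rho_*\mathscr O_{\overline X}$ --- is itself an isomorphism.

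To finish, I would match up the two natural maps out of $\mathscr O_X$. Using that the natural transformation $\mathscr O_{(-)}\to\underline\Omega_{(-)}^0$ (coming from $\Omega^\bullet\to\underline\Omega^\bullet$, Theorem \ref{theorem Du Bois properties}(v)) commutes with functorial pullback along $\rho$, and that $\mathscr O_{\overline X}\to\underline\Omega_{\overline X}^0$ is an isomorphism since $\overline X$ is Du Bois, one sees that the composite $\mathscr O_X\to\underline\Omega_X^0\xrightarrow{\ \rho^*\ }\mathbf R\rho_*\underline\Omega_{\overline X}^0$ agrees, after identifying $\mathbf R\rho_*\underline\Omega_{\overline X}^0$ with $\mathbf R\rho_*\mathscr O_{\overline X}=\mathbf R\pi_*\mathscr O_{\overline X}$, with the canonical restriction map $\mathscr O_X\to\mathbf R\pi_*\mathscr O_{\overline X}$ of the statement. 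Since $\rho^*$ is an isomorphism by the previous paragraph, this composite is a quasi-isomorphism precisely when $\mathscr O_X\to\underline\Omega_X^0$ is, that is, precisely when $X$ has Du Bois singularities. The only genuinely external ingredient is the Mayer--Vietoris descent triangle for $\underline\Omega^\bullet$; I expect the main (and only mild) care to be in pinning down the maps in that triangle precisely enough to run the cancellation and to match up the canonical maps in the last step.
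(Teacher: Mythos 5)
The paper states this result purely as a citation to Schwede and gives no proof of its own; your argument is a correct reconstruction of Schwede's original proof, resting on exactly the right three inputs --- the Mayer--Vietoris/descent triangle for the abstract blow-up square of the embedded resolution (the general form of Theorem \ref{theorem Du Bois properties}\ref{theorem right triangle resolution exceptional divisor}), the fact that the snc variety $\overline X$ is Du Bois, and $\mathbf R\pi_*\mathscr O_{\widetilde Y}\cong\mathscr O_Y$ for $Y$ smooth. The formal cancellation identifying the cone and the matching of the canonical maps out of $\mathscr O_X$ are both handled correctly, so there is nothing to add.
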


As we mentioned, the Du Bois complex depends on the existence of hyperresolutions of singularities.  For lack of a reference, we remark that the conditions of Schwede's criterion hold in the analytic category, as embedded resolutions of singularities exist by \cite{bierstone1997canonical}.  

\begin{definition} \label{definition Du Bois analytic}
    Let $X$ be a complex analytic variety.  We say that $X$ has \textit{Du Bois singularities} if for (local) embedding $X \subset Y$ and any embedded resolution of singularities $\pi:\widetilde Y \to Y$ which is an isomorphism outside of $X$, the canonical morphism $$\mathscr O_X \to \mathbf R\pi_*\mathscr O_{\overline X}$$ is a quasi-isomorphism, where $\overline X : = \pi^{-1}(X)$ is the reduced preimage.
\end{definition}

Unfortunately, Schwede's criterion fails for the higher graded pieces, as the proof depends on the fact that simple normal crossing singularities are (0-) Du Bois.  We note that a hyperresolution-free description of the complexes $\underline \Omega_X^p$ has been given in \cite{hampton2023hyperresolution}.

\section{Hodge Modules and Differentials on the Resolution}

\subsection{Mixed Hodge Modules} (Mixed) Hodge modules are generalizations of variations of (mixed) Hodge structures in the presence of singularities.  We review some definitions concerning mixed Hodge modules following \cite{kebekus2021extending}.  The interested reader may also refer to \cite{schnell2014overview} for more details.

\subsubsection{Pure and Mixed Hodge Modules}\label{subsubsection pure and mixed Hodge module} Let $Y$ be a smooth complex manifold of dimension $d$ (for example, let $Y$ be a local embedding of a complex algebraic variety $X$ of codimension $c = d-n$).
    A \textit{pure Hodge module} on $Y$ is an object $M = (\mathcal M, F^\bullet, \mathrm{rat}~M)$ consisting of:

    \begin{enumerate}[label=\normalfont(\roman*)]
        \item A regular holonomic left $\mathscr D_Y$-module $\mathcal M$, where $\mathscr D_Y$ is the sheaf of differential operators on $Y$;

        \item \label{item good filtration} An increasing \textit{good} filtration $F_\bullet \mathcal M$ of coherent $\mathscr O_Y$-modules, called the Hodge filtration, which is compatible with the $\mathscr D_Y$-module structure: $$F_p\mathcal M\cdot F_q\mathscr D_Y \subset F_{p + q}\mathcal M,$$ and $\mathrm{gr}^F_\bullet \mathcal M$ is coherent over $\mathrm{gr}^F_\bullet \mathscr D_Y$.

        \item\label{item perverse structure} A perverse sheaf $\mathrm{rat}~M$ of $\mathbb Q$-vector spaces satisfying $$\mathrm{rat}~M\otimes \mathbb C \cong \mathrm{DR}(\mathcal M),$$ where $\mathrm{DR}$ is the de Rham complex $$\mathrm{DR}(\mathcal M): = \big[\mathcal M \to \Omega_Y^1\otimes_{\mathscr O_Y} \mathcal M \to ... \to \Omega_Y^{d}\otimes_{\mathscr O_Y} \mathcal M\big][d]$$ associated to the $\mathscr D_Y$-module $\mathcal M$.  In particular, we have the support condition \begin{equation}\label{equation perverse support}
        \dim \mathrm{Supp}~\mathscr H^j\mathrm{DR}(\mathcal M) \le -j.
    \end{equation}
    \end{enumerate}

    \begin{example}
     If $Y$ is a smooth complex manifold of dimension $d$, the locally constant sheaf $\mathbb Q_Y$ admits the structure of a variation of pure Hodge structures.  It therefore inherits a Hodge module structure.  The underlying left $\mathscr D_Y$-module is simply $\mathscr O_Y$, considered as a subsheaf of $\mathscr{E}nd_{\mathbb C_Y}(\mathscr O_Y)$.  The Hodge filtration is the trivial filtration $F_p\mathscr O_Y = \mathscr O_Y$ for $p \ge 0$ (and zero otherwise).  The perverse structure gives the identification $$\mathbb C_Y[d] \cong_\mathrm{qis} \mathrm{DR}(\mathscr O_Y) = \big[\mathscr O_Y \xrightarrow{d} \Omega_Y^1 \xrightarrow{d} ... \xrightarrow{d} \Omega_Y^{d}\big][d].$$
    \end{example}

    One can generalize the previous construction to any variation of pure Hodge structure.  Conversely, pure Hodge modules satisfy \textit{decomposition by strict support} \cite[\S 5]{saito1988modules}: a Hodge module is completely determined by a collection of variations of pure Hodge structures $M_{Y_i}$ supported on a stratification $\left\{Y_i\right\}$ of $Y$, of weights $w - \dim Y_i$, for some $w$.  We refer $w$ as the weight of the Hodge module.  Because of this, we can define the category $\mathrm{HM}(Y,w)$ of \textit{polarized Hodge modules} of weight $w$ by inducing a polarization from the data $M_{Y_i}$ of polarized variations of pure Hodge structures. \newline

    A \textit{mixed Hodge module} on $Y$ is an object $M = (\mathcal M, F^\bullet, W_\bullet, \mathrm{rat}~M)$ consisting of a $\mathscr D_Y$-module structure $\mathcal M$, a perverse structure $\mathrm{rat}~M$, a decreasing good filtration $F^\bullet$ on $\mathcal M$, and an increasing \textit{weight} filtration $W_\bullet$ on these structures such that the graded pieces $$\mathrm{gr}_W^kM = (\mathrm{gr}_W^k\mathcal M, \mathrm{rat}~\mathrm{gr}_W^k M, F^\bullet)$$ are pure Hodge modules.  We call $M$ a \textit{graded-polarizable} mixed Hodge module if further $\mathrm{gr}_W^k M$ is polarizable for each $k$.  We refer to the category of graded-polarizable mixed Hodge modules as $\mathrm{MHM}(Y)$.

    \subsubsection{The Dual Mixed Hodge Module} Let $Y$ be a complex manifold of dimension $d$, and fix a mixed Hodge module $M = (\mathcal M, F^\bullet,  W_\bullet, \mathrm{rat}~M) \in \mathrm{MHM}(Y)$. 
 There is a mixed Hodge module $\mathbf D_Y(M)$, known as the \textit{dual Hodge module}, satisfying \begin{equation}\label{equation dual mixed Hodge module}\mathbf D_Y(\mathrm{gr}_W^k M) = \mathrm{gr}_{-k}^W\mathbf D_Y(M)\end{equation} for every $k$.  The underlying perverse sheaf $\mathrm{rat}~\mathbf D_Y(M)$ is the Verdier dual of $\mathrm{rat}~M$, and the underlying $\mathscr D_Y$-module is the holonomic dual $$\mathbf D_Y(\mathcal M) = R^{d}\mathrm{Hom}_{\mathscr D_Y}(\omega_Y\otimes_{\mathscr O_Y}\mathcal M, \mathscr D_Y),$$ which is compatible with the filtration $F^\bullet$ of item \ref{item good filtration} of \S \ref{subsubsection pure and mixed Hodge module}, by \cite[Lemma 5.1.13]{saito1988modules}.  In particular, if $M$ is a pure Hodge module of weight $w$, then $\mathbf D_Y(M)$ is a pure Hodge module of weight $-w$.  In this case, $\mathbf D_Y(M) \cong M(w)$, where $M(w)$ is the \textit{Tate-twist} of $M$ in degree $w$, obtained by twisting the perverse structure and Hodge filtration in the usual way.

    The compatibility of the Hodge filtration with $\mathbf D_Y$ is due to Saito \cite[2.4.3]{saito1988modules}.  Specifically, there is an isomorphism
    \begin{equation}\label{equation duality mixed} \mathbf R\mathscr Hom_{\mathscr O_Y}(\mathrm{gr}_p^F\mathrm{DR}(\mathcal M), \omega_Y^\bullet) \cong \mathrm{gr}_{-p}^F\mathrm{DR}(\mathbf D_Y(\mathcal M)),   
    \end{equation}
    where again $\omega_Y^\bullet = \omega_Y[d]$.  If $M \in \mathrm{HM}(Y,w)$, the isomorphism $\mathbf D_Y(M) \cong M(w)$ reduces (\ref{equation duality mixed}) to 
    \begin{equation}\label{eqution duality pure}
     \mathbf R\mathscr Hom_{\mathscr O_Y}(\mathrm{gr}_p^F\mathrm{DR}(\mathcal M), \omega_Y^\bullet) \cong \mathrm{gr}_{-p-w}^F\mathrm{DR}(\mathcal M).   
    \end{equation}

   \subsubsection{The Restricted Hodge Module} Now let $Y = \mathbb C^{d}$ and $M = (\mathcal M, F^\bullet, W_\bullet, \mathrm{rat}~M) \in \mathrm{MHM}(Y)$.  For a generic hyperplane section $H$ of $Y$, $H$ intersects any Whitney strata adapted to the perverse sheaf $\mathrm{DR}(\mathcal M)$ transversely.  Therefore $H$ defines a \textit{non-characteristic hypersurface} with respect to the left $\mathscr D_Y$-module $\mathcal M$, see \cite[Definition 4.15]{kebekus2021extending} and \cite[\S 9]{schnell16saito}.  Given such a hyperplane $H \subset Y$, we can construct a mixed Hodge module $M_H \in \mathrm{MHM}(H)$ by \cite[Lemma 2.25]{saito1990mixed}.  If $\iota_H: H \hookrightarrow Y$ is the inclusion, the underlying $\mathscr D_H$-module is $$\mathcal M_H = \mathscr O_H\otimes_{\iota_H^{-1}\mathscr O_Y} \iota_H^{-1} \mathcal M$$ with filtration $F^\bullet \mathcal M_H = \mathscr O_H\otimes_{\iota_H^{-1}\mathscr O_Y}\iota_H^{-1}F^\bullet \mathcal M$, see \cite[Lemma 9.5]{schnell16saito}.  The de Rham complex of $\mathcal M_H$ is $$\mathrm{DR}(\mathcal M_H) = \iota_H^{-1}\mathrm{DR}(\mathcal M)[-1].$$  This give the data of a mixed Hodge module $M_H$ on $H$.

   The restricted Hodge module will be important for many inductive arguments, as we have the following generalization of the conormal bundle sequence \cite[(13.3)]{schnell2014overview}: \begin{equation}\label{equation conormal cutdown} 0 \to N_{H|Y}^*\otimes_{\mathscr O_H} \mathrm{gr}_{p+1}^F\mathrm{DR}(\mathcal M_H) \to \mathscr O_H \otimes_{\mathscr O_Y}\mathrm{gr}_p^F\mathrm{DR}(\mathcal M) \to \mathrm{gr}_p^F\mathrm{DR}(\mathcal M_H)[1] \to 0, 
   \end{equation} where $N_{H|Y}^*$ is the conormal bundle of the inclusion $\iota_H$.

\subsection{The Intersection Hodge Module} Returning now to singularities, let $X$ be a reduced and (for convenience) irreducible complex analytic variety of dimension $n$, and let $\iota: X \hookrightarrow Y : = \mathbb C^{n+c}$ be a (local) closed embedding into the smooth open ball $Y$ of codimension $c$.  We consider an object $\mathrm{IC}_X \in \mathrm{HM}(Y, n)$, called the \textit{intersection Hodge module}, whose support is exactly $X$\footnote{More generally, we can consider the category of pure/mixed Hodge modules on $X$ by passing to local embeddings into smooth varieties: see \cite[\S 2]{saito1990mixed}}.  By \cite[Thm. 3.21]{saito1990mixed}, the category $\mathrm{PVHS}_{\mathrm{gen}}(X, w)$, which is the direct limit of polarized variations of Hodge structures with quasi-unipotent local monodromies over Zariski open subsets $U \subset X \subset Y$, is equivalent to the subcategory of $\mathrm{HM}(Y, w)$ of pure Hodge modules with \textit{strict support} $X$.  

\begin{definition} \label{definition intersection hodge module}
Let $X$ be a complex analytic variety of dimension $n$ and $\iota:X \hookrightarrow Y$ a smooth embedding into a ball $Y = \mathbb C^{n+c}$ of codimension $c$.  The \textit{intersection Hodge module} $\mathrm{IC}_X = (\mathcal{IC}_X, F, IC_X) \in \mathrm{HM}(Y,n)$ is the unique Hodge module with support $X$ determined by the variation of Hodge structures $\mathbb Q_U[n]$, where $U = X_{\mathrm{reg}}$ is the regular locus of $X$.   
\end{definition}

We remark that the underlying perverse sheaf $IC_X$ computes the intersection cohomology $IH^\bullet(X, \mathbb Q) : = \mathbb H^{\bullet-n}(X, IC_X)$ of $X$.  With this in mind, we get a Hodge-theoretic interpretation of the decomposition theorem. 

Let $\pi:\widetilde X \to X$ be a resolution of singularities, and consider the induced morphism $f = \iota\circ \pi:\widetilde X \to Y$.  By Saito's direct image theorem \cite[\S 5.3]{saito1988modules}, there are pure Hodge modules $M_l = (\mathcal M_l, F^\bullet, \mathrm{rat}~M_l)$ supported on the singularities of $X$ and a decomposition\begin{equation}\label{equation saito decomposition} \mathbf Rf_*\Omega_{\widetilde X}^p[n-p] \cong_{\mathrm{qis}} \mathrm{gr}_{-p}^F\mathrm{DR}(\mathcal{IC}_X)\oplus \bigoplus_{l \in \mathbb Z} \mathrm{gr}_{-p}^F\mathrm{DR}(\mathcal M_l)[-l],   
\end{equation} see \cite[(8.0.3)]{kebekus2021extending}.  As the Hodge modules $M_l$ are supported on the singularities, they are torsion; this gives \begin{equation} \label{equation holomorphic forms intersection complex} f_*\Omega_{\widetilde X}^p \cong \mathscr H^{-(n-p)}\mathrm{gr}_{-p}^F\mathrm{DR}(\mathcal{IC}_X)  
\end{equation} by \cite[Proposition 8.1]{kebekus2021extending}.

\subsection{The Logarithmic Hodge Module}

Let $X$ be a reduced and irreducible complex analytic variety and $\iota:X \hookrightarrow Y$ a (local) closed embedding into a complex ball $Y = \mathbb C^{n+c}$ of codimension $c$.  If $\pi:\widetilde X \to X$ is a log-resolution of singularities with log-exceptional divisor $E$, recall from \S\ref{subsection notation} that we assume an isomorphism $U : = X_{\mathrm{reg}} \cong \widetilde X\setminus E$.  The sheaf $\mathbb Q_{\widetilde X\setminus E}[n]$ defines a variation of pure Hodge structures; as a perverse sheaf, we can consider the pushforward $\mathbf Rj_*\mathbb Q_{\widetilde X\setminus E}$, where $j:\tilde X \setminus E \hookrightarrow \widetilde X$ is the inclusion. By \cite[Thm. 3.27]{saito1990mixed}, this extends to a graded-polarizble mixed Hodge module on the smooth manifold $\widetilde X$.  Specifically, the underlying $\mathscr D_{\widetilde X}$-module is the sheaf $\mathscr O_{\widetilde X}(*E)$ of meromorphic functions which are regular outside $E$.  The filtered pieces $F_p \mathscr D_{\widetilde X}$ act naturally on $\mathscr O_{\widetilde X}(*E)$, inducing a good filtration on $\mathscr O_{\widetilde X}(*E)$.  The de Rham complex is simply $$\mathrm{DR}(\mathscr O_{\widetilde X}(*E)) = \big[ \mathscr O_{\widetilde X}(*E) \xrightarrow{d} \Omega_{\widetilde X}^1(*E) \xrightarrow{d} ... \xrightarrow{d} \Omega_{\widetilde X}^n(*E)\big][n].$$ By \cite[Proposition 3.11]{saito1990mixed} (or classically), the inclusion $\Omega_{\widetilde X}^\bullet(\log E)[n]\hookrightarrow \mathrm{DR}(\mathscr O_{\widetilde X}(*E))$ is a filtered quasi-isomorphism: $$\Omega_{\widetilde X}^p(\log E)[n-p]\cong_{\mathrm{qis}} \mathrm{gr}_{-p}^F\mathrm{DR}(\mathscr O_{\widetilde X}(*E)).$$

Let $f = \iota\circ \pi:\widetilde X \to Y$. Saito's direct image theorem says there is a family of mixed Hodge modules $\left\{N_l\right\}_{l \in \mathbb Z}$ whose rational perverse structure $\mathrm{rat}~N_l$ is simply the $l^{th}$ perverse cohomology sheaf of $\mathbf Rf_*(j_*\mathbb Q_{\widetilde X\setminus E}[n]) \cong \mathbf Rj_*\mathbb Q_U[n]$.  As such, the support of $N_0$ is $X$, the support of $N_l$ for $l \ne 0$ is contained in $X_{\mathrm{sing}}$, and $N_0$ has no non-trivial sub-objects supported on $X_{\mathrm{sing}}$ \cite[Lemma 9.3]{kebekus2021extending}.  

\begin{definition}
Let $X$ be a reduced and irreducible complex analytic variety of dimension $n$, let $\iota: X \to Y$ be a (local) closed embedding into $Y = \mathbb C^{n+c}$, let $\pi:\widetilde X \to X$ be a log-resolution of singularities with log-exceptional divisor $E$, let $j:\widetilde X \setminus E \hookrightarrow \widetilde X$ be the inclusion, and let $f = \iota\circ \pi:\widetilde X \to Y$. 

The \textit{logarithmic mixed Hodge module} $N_0 \in \mathrm{MHM}(Y)$ is the unique mixed Hodge module supported on $X$ obtained from the push-forward $f_*(j_*\mathbb Q_{\widetilde X\setminus E})$ of perverse sheaves.  We refer to the underlying $\mathscr D_Y$-module as $\mathcal N_0$.
\end{definition}

By \cite[Proposition 9.5]{kebekus2021extending}, we get the following important relationship between logarithmic forms and the logarithmic Hodge module $N_0 = (\mathcal N_0, F^\bullet, W_\bullet, \mathrm{rat}~N_0)$:\begin{equation} \label{equation logarithmic forms logarithmic hodge module} f_*\Omega_{\widetilde X}^p(\log E) \cong \mathscr H^{-(n-p)} \mathrm{gr}_{-p}^F\mathrm{DR}(\mathcal N_0).   
\end{equation}

\subsection{An Extension Criterion for Differentials on a Resolution of Singularities} Let $X$ be a normal complex analytic variety.  By (\ref{equation holomorphic forms intersection complex}) and (\ref{equation logarithmic forms logarithmic hodge module}) the sheaves $\pi_*\Omega_{\widetilde X}^p$ and $\pi_*\Omega_{\widetilde X}^p(\log E)$ associated to a (log-)resolution of singularities can be recovered from the intersection Hodge module $\mathrm{IC}_X$ and the logarithmic Hodge module $N_0$, respectively.  By Proposition \ref{proposition kebekus criterion complexes}, the following gives a criterion to when the inclusions (\ref{equation holomorphic extension}) and (\ref{equation logarithmic extension}) are isomorphisms:

\begin{proposition} \label{proposition extension criterion Hodge module}
Let $X$ be a normal complex variety of dimension $n$, and let $X \subset Y$ be a (local) closed embedding into a smooth complex manifold $Y$. 

\begin{enumerate}[label=\normalfont(\roman*)]
    \item Holomorphic extension holds in degree $p \ge 0$ if \begin{equation} \label{equation holomorphic support condition}
        \dim \mathrm{Supp}~\mathscr H^{j + n-p}\mathrm{gr}_{-(n-p)}^F\mathrm{DR}(\mathcal{IC}_X) \le -(j+2)
    \end{equation} for every $j \ge 0$, where $\mathcal{IC}_X$ is the $\mathscr D_Y$-module underlying the intersection Hodge module $\mathrm{IC}_X$.

    \item Logarithmic extension holds in degree $p \ge 0$ if \begin{equation} \label{equation logarithmic support condition}
       \dim \mathrm{Supp}~\mathscr H^j\mathrm{gr}_{p}^F\mathrm{DR}(\mathbf D_Y(\mathcal N_0)) \le n-j-p-2 
    \end{equation} for every $j \ge 0$, where $\mathcal N_0$ is the $\mathscr D_Y$-module underlying the logarithmic Hodge module $N_0$.
\end{enumerate}
\end{proposition}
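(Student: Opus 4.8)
The plan is to deduce both parts from Proposition \ref{proposition kebekus criterion complexes}, applied on $Y$ with the subvariety $A=\Sigma$; note $\dim\Sigma\le n-2$ since $X$ is normal. In each case the strategy is the same: realize the relevant sheaf of forms on a resolution as $\mathscr H^0$ of an explicit complex $K\in\mathrm{D}_{\mathrm{coh}}^b(\mathscr O_Y)$ built from a Hodge module, compute the Grothendieck dual $\mathbf R\mathscr Hom_{\mathscr O_Y}(K,\omega_Y^\bullet)$ by means of Saito's duality isomorphisms (\ref{equation duality mixed})--(\ref{eqution duality pure}), and observe that the support hypotheses of Proposition \ref{proposition kebekus criterion complexes} then reduce to (\ref{equation holomorphic support condition}), respectively (\ref{equation logarithmic support condition}).

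For (i), take $K:=\mathrm{gr}_{-p}^F\mathrm{DR}(\mathcal{IC}_X)[p-n]$. By (\ref{equation holomorphic forms intersection complex}) we have $\mathscr H^0K\cong\pi_*\Omega_{\widetilde X}^p$ (viewed as a sheaf on $Y$), whose restriction away from $\Sigma$ is $\Omega_{X_{\mathrm{reg}}}^p$, so that unique extension of sections of $\mathscr H^0K$ across $\Sigma$ is exactly holomorphic extension in degree $p$; moreover $\mathscr H^jK=0$ for $j<0$ because $\mathrm{gr}_{-p}^F\mathrm{DR}(\mathcal{IC}_X)$ is a direct summand of $\mathbf R\pi_*\Omega_{\widetilde X}^p[n-p]$ by (\ref{equation saito decomposition}) and $\mathbf R\pi_*$ of a sheaf is concentrated in non-negative degrees. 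Since $\mathrm{IC}_X\in\mathrm{HM}(Y,n)$, (\ref{eqution duality pure}) with $w=n$ gives
\[
\mathbf R\mathscr Hom_{\mathscr O_Y}(K,\omega_Y^\bullet)\cong\mathbf R\mathscr Hom_{\mathscr O_Y}\big(\mathrm{gr}_{-p}^F\mathrm{DR}(\mathcal{IC}_X),\omega_Y^\bullet\big)[n-p]\cong\mathrm{gr}_{-(n-p)}^F\mathrm{DR}(\mathcal{IC}_X)[n-p],
\]
so $R^k\mathscr Hom_{\mathscr O_Y}(K,\omega_Y^\bullet)\cong\mathscr H^{k+n-p}\mathrm{gr}_{-(n-p)}^F\mathrm{DR}(\mathcal{IC}_X)$; taking $A=\Sigma$, the estimate required by Proposition \ref{proposition kebekus criterion complexes} for $k\ge 0$ is precisely (\ref{equation holomorphic support condition}) with $j=k$. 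Part (ii) is formally identical, with $\mathcal{IC}_X$ replaced by the logarithmic mixed Hodge module $N_0$: set $K':=\mathrm{gr}_{-p}^F\mathrm{DR}(\mathcal N_0)[p-n]$, so $\mathscr H^0K'\cong\pi_*\Omega_{\widetilde X}^p(\log E)$ by (\ref{equation logarithmic forms logarithmic hodge module}) and $\mathscr H^jK'=0$ for $j<0$ as before, and apply the mixed duality (\ref{equation duality mixed}) to obtain $\mathbf R\mathscr Hom_{\mathscr O_Y}(K',\omega_Y^\bullet)\cong\mathrm{gr}_{p}^F\mathrm{DR}(\mathbf D_Y(\mathcal N_0))[n-p]$; under the substitution $j=k+n-p$ the estimate required by Proposition \ref{proposition kebekus criterion complexes} for $j\ge 0$ becomes exactly (\ref{equation logarithmic support condition}).

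The main obstacle is the last step: Proposition \ref{proposition kebekus criterion complexes} demands the support estimate for every $k\in\mathbb Z$, whereas (\ref{equation holomorphic support condition}) and (\ref{equation logarithmic support condition}) only address $j\ge 0$, so one must show that the remaining estimates (for $k<0$ in (i), and $k<p-n$ in (ii)) hold unconditionally. Here one uses that on $X_{\mathrm{reg}}$ the complexes $\mathrm{gr}_{-(n-p)}^F\mathrm{DR}(\mathcal{IC}_X)$ and $\mathrm{gr}_{p}^F\mathrm{DR}(\mathbf D_Y(\mathcal N_0))$ reduce, up to the appropriate shift, to $\Omega_{X_{\mathrm{reg}}}^{n-p}$, so that all of their remaining cohomology is supported on $\Sigma$; its dimension is bounded a priori through (\ref{equation saito decomposition}) by the fibre dimensions of $\pi$, and the extra sharpening needed to reach the bound $-(k+2)$ comes, in case (i), from the self-duality (minimal-extension) property of $IC_X$ together with the perverse support condition (\ref{equation perverse support}), and in case (ii) from the fact that $N_0$ has no nonzero sub- or quotient object supported on $\Sigma$. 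Combined with $\dim\Sigma\le n-2$, these refined support bounds close the argument. A secondary point to be careful about is that, since $N_0$ is only mixed, Saito's direct-image decomposition does not split for it, so the identification $\mathscr H^0K'\cong\pi_*\Omega_{\widetilde X}^p(\log E)$ must be invoked from (\ref{equation logarithmic forms logarithmic hodge module}) rather than obtained from a splitting as in case (i).
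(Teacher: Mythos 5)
Your core computation is exactly the paper's proof: realize $\pi_*\Omega_{\widetilde X}^p$ and $\pi_*\Omega_{\widetilde X}^p(\log E)$ as $\mathscr H^0$ of $K=\mathrm{gr}_{-p}^F\mathrm{DR}(\mathcal{IC}_X)[p-n]$ and $K'=\mathrm{gr}_{-p}^F\mathrm{DR}(\mathcal N_0)[p-n]$ via (\ref{equation holomorphic forms intersection complex}) and (\ref{equation logarithmic forms logarithmic hodge module}), apply Proposition \ref{proposition kebekus criterion complexes} with $A=\Sigma$, and convert $\mathbf R\mathscr Hom_{\mathscr O_Y}(-,\omega_Y^\bullet)$ using (\ref{eqution duality pure}) for the pure module of weight $n$ and (\ref{equation duality mixed}) for $\mathcal N_0$. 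That part is correct and is all the paper does.

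The gap is in your closing paragraph, and it is not cosmetic. With the paper's conventions, $\mathrm{gr}_{-q}^F\mathrm{DR}(\mathcal{IC}_X)$ is concentrated in cohomological degrees $[-(n-q),0]$, so $\mathbf R\mathscr Hom_{\mathscr O_Y}(K,\omega_Y^\bullet)\cong\mathrm{gr}_{-(n-p)}^F\mathrm{DR}(\mathcal{IC}_X)[n-p]$ lives entirely in degrees $[-n,\,p-n]$, i.e.\ in non-positive degrees for $p\le n$. Thus $R^k\mathscr Hom_{\mathscr O_Y}(K,\omega_Y^\bullet)$ vanishes automatically for all $k>p-n$: the estimates you label ``$k\ge 0$'' are (essentially) the vacuous ones, while the whole content of the hypothesis of Proposition \ref{proposition kebekus criterion complexes} sits in the range $-n+1\le k\le p-n$ --- precisely the range you declare to ``hold unconditionally.'' It cannot: if it did, the proposition would assert holomorphic extension for every normal variety in every degree $p<n$, contradicting the sharpness of Flenner's bound. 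Concretely, the a priori bound you can extract from (\ref{equation saito decomposition}) and the fibre dimensions of $\pi$ is $\dim\mathrm{Supp}\,R^{k}\mathscr Hom_{\mathscr O_Y}(K,\omega_Y^\bullet)\le -(k+1)$ (it is a summand of $R^{k+n}\pi_*\Omega_{\widetilde X}^{n-p}$), one worse than the required $-(k+2)$; that last improvement is exactly what the hypothesis must supply (and what, in the application, the Du Bois assumption supplies via Proposition \ref{proposition vanishing intersection hodge module Du Bois}). Neither the self-duality of $\mathrm{IC}_X$ nor the perverse support condition (\ref{equation perverse support}) closes it for free, and the analogous inversion occurs in your treatment of (ii). The honest fix is a reindexing: the only degrees that are automatic are $k\le -n$ (from $\dim\Sigma\le n-2$) and $k>p-n$ (where the cohomology vanishes), so the hypothesis must be read as covering the degrees $-p<i\le 0$ of $\mathrm{gr}_{-(n-p)}^F\mathrm{DR}(\mathcal{IC}_X)$ rather than as a condition whose complement is free.
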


\begin{proof}
Rewrite (\ref{equation holomorphic forms intersection complex}) and (\ref{equation logarithmic forms logarithmic hodge module}) as $$f_*\Omega_{\widetilde X}^p \cong \mathscr H^0\mathrm{gr}_{-p}^F\mathrm{DR}(\mathcal{IC}_X)[p-n], \quad f_*\Omega_{\widetilde X}^p(\log E) \cong \mathscr H^0\mathrm{gr}_{-p}^F\mathrm{DR}(\mathcal N_0)[p-n].$$ Proposition \ref{proposition kebekus criterion complexes} says then that these global sections extend uniquely in codimension 2 if $$\dim \mathrm{Supp}~\mathscr H^{j+n-p}\mathbf R\mathscr Hom_{\mathscr O_Y}(\mathrm{gr}_{-p}^F\mathrm{DR}(\mathcal{IC}_X)[p-n], \omega_Y^\bullet) \le (-j+2)$$ and $$\dim \mathrm{Supp}~\mathscr H^j\mathbf R\mathscr Hom_{\mathscr O_Y}(\mathrm{gr}_{-p}^F\mathrm{DR}(\mathcal N_0)[p-n], \omega_Y^\bullet) \le -(j+2)$$ hold, respectively.  The first inequality is equivalent to (\ref{equation holomorphic support condition}) by duality for pure Hodge modules (\ref{eqution duality pure}), since $\mathrm{IC}_X$ has weight $n$, and the second inequality is equivalent to (\ref{equation logarithmic support condition}) by duality for mixed Hodge modules (\ref{equation duality mixed}).
\end{proof}

\section{Logarithmic Extension for Du Bois Singularities}

\subsection{Work of Kov\'acs-Schwede-Smith} Let $X$ be a normal variety of dimension $n$ with singular locus $\Sigma$.  It is well-known that holomorphic extension fails in degrees $p \ge \mathrm{codim}_X(\Sigma)$ for varieties with Du Bois singularities: the affine cone of a smooth and projective Calabi-Yau variety will be strictly log-canonical and will fail holomorphic extension in degree $n$.  On the other hand, logarithmic extension is known to hold in all degrees if we further assume $X$ is Cohen-Macaulay by \cite[Theorem 1.1]{kovacs2010canonical} and \cite[Theorem 1.5]{kebekus2021extending}.  The key input is to use Proposition \ref{proposition schwede criterion} to identify the sheaf $\pi_*\omega_{\widetilde X}(E)$ of logarithmic $n$-forms coming from a log-resolution $\pi:\widetilde X \to X$ with $\mathscr H^0\mathbb D(\underline \Omega_X^0)$.  We extend this proof with minor adjustments to the non-CM case. 

\subsection{Proof of Theorem \ref{theorem logarithmic extension intro}} 
\begin{lemma} \label{lemma KSS variant}
 Let $X$ be a normal complex algebraic variety of dimension $n$.  For any log-resolution of singularities $\pi:\widetilde X \to X$ with log-exceptional divisor $E$, then the natural map $$\mathscr H^0\mathbb D_X(\underline \Omega_X^0) \to\pi_*\omega_{\widetilde X}(E)$$ is an isomorphism, where $\mathbb D_X$ is the Grothendieck duality functor on $X$. 
\end{lemma}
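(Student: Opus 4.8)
The plan is to dualize the canonical triangle that expresses $\underline\Omega_X^0$ in terms of a log-resolution, and then to read off $\mathscr H^0$ using Grothendieck duality.

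First I would apply Theorem~\ref{theorem Du Bois properties}(ix) with $p=0$. Since $\Omega_{\widetilde X}^0(\log E)(-E) = \mathscr O_{\widetilde X}(-E)$, it produces a canonical, resolution-independent exact triangle
$$\mathbf R\pi_*\mathscr O_{\widetilde X}(-E)\longrightarrow \underline\Omega_X^0\longrightarrow \underline\Omega_\Sigma^0\xrightarrow{+1}.$$
Applying $\mathbb D_X$ and Grothendieck duality for the proper morphism $\pi$ (so that $\mathbb D_X\circ\mathbf R\pi_*\cong \mathbf R\pi_*\circ\mathbb D_{\widetilde X}$, using that $X$ and $\widetilde X$ are both $n$-dimensional, hence $\pi^!\omega_X^\bullet=\omega_{\widetilde X}^\bullet$), together with $\mathbb D_{\widetilde X}(\mathscr O_{\widetilde X}(-E)) = \omega_{\widetilde X}\otimes\mathscr O_{\widetilde X}(E)$ on the smooth $\widetilde X$, I obtain a canonical exact triangle
$$\mathbb D_X(\underline\Omega_\Sigma^0)\longrightarrow \mathbb D_X(\underline\Omega_X^0)\longrightarrow \mathbf R\pi_*(\omega_{\widetilde X}(E))\xrightarrow{+1}.$$
Taking $\mathscr H^0$ of the middle arrow and using $\mathscr H^0\mathbf R\pi_*(\omega_{\widetilde X}(E)) = \pi_*\omega_{\widetilde X}(E)$ yields a canonical morphism $\mathscr H^0\mathbb D_X(\underline\Omega_X^0)\to \pi_*\omega_{\widetilde X}(E)$; restricted to the regular locus $U$ both sides become $\omega_U$ and the morphism is the identity, so this is the natural map of the statement, and it recovers the Kov\'acs-Schwede-Smith identification with no Cohen-Macaulay hypothesis.

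It then suffices to prove $\mathscr H^0\mathbb D_X(\underline\Omega_\Sigma^0) = \mathscr H^1\mathbb D_X(\underline\Omega_\Sigma^0) = 0$: the long exact cohomology sequence of the last triangle will then force the displayed morphism to be an isomorphism. This is precisely where normality replaces the Cohen-Macaulay input. Normality gives $\dim\Sigma\le n-2$, and by Theorem~\ref{theorem Du Bois properties}(iv) the sheaf $\mathscr H^j\underline\Omega_\Sigma^0$ is supported in dimension $\le\dim\Sigma-j$. Filtering $\underline\Omega_\Sigma^0$ by its canonical truncations and dualizing, the graded pieces of $\mathbb D_X(\underline\Omega_\Sigma^0)$ are the complexes $\mathbb D_X(\mathscr H^j\underline\Omega_\Sigma^0)[j]$; since $\mathbb D_X$ of a coherent sheaf $G$ is concentrated in cohomological degrees $\ge n-\dim\mathrm{Supp}\,G$ (the standard support estimate, with the normalization $\mathscr H^0\omega_X^\bullet=\omega_X$), each such piece lies in degrees $\ge (n-(\dim\Sigma-j))-j = n-\dim\Sigma\ge 2$. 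Hence $\mathbb D_X(\underline\Omega_\Sigma^0)$ is concentrated in degrees $\ge 2$, which gives the vanishing.

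Everything past Theorem~\ref{theorem Du Bois properties}(ix) is formal; the only real content is the support/degree estimate of the last step, and that is exactly where normality (equivalently $\mathrm{codim}_X\Sigma\ge 2$) together with the support bound for the Du Bois complex does the work. The one point requiring care is the bookkeeping of dualizing-complex normalizations across $\mathbb D_X$, $\mathbb D_{\widetilde X}$, and Grothendieck duality along $\pi$, so that $\pi_*\omega_{\widetilde X}(E)$ genuinely occupies cohomological degree $0$ of $\mathbb D_X(\underline\Omega_X^0)$.
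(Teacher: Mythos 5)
Your proof is correct and follows essentially the same route as the paper: dualize the $p=0$ triangle of Theorem~\ref{theorem Du Bois properties}\ref{theorem right triangle log zero Du Bois}, identify the third term with $\mathbf R\pi_*\omega_{\widetilde X}(E)$ via Grothendieck duality, and kill $\mathbb D_X(\underline\Omega_\Sigma^0)$ in the relevant degrees using the support bound of Theorem~\ref{theorem Du Bois properties}\ref{theorem property dim support Du Bois} together with $\dim\Sigma\le n-2$. The only difference is cosmetic: the paper runs the hyperext spectral sequence $E_2^{p,q}=R^p\mathscr Hom(\mathscr H^{-q}\underline\Omega_\Sigma^0,\omega_Y^\bullet)$ (citing the Kov\'acs--Schwede--Smith vanishing) where you filter by canonical truncations, and your normalization $\mathscr H^0\omega_X^\bullet=\omega_X$ absorbs the shift $[n]$ appearing in the paper's triangle.
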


\begin{proof}
Let $X \subset Y$ be a (locally) closed embedding into a smooth complex manifold $Y$, and consider the singular locus $\Sigma \subset X \subset Y$ under this embedding.  There is a distinguished triangle $$\mathbf R\mathscr Hom_{\mathscr O_X}(\underline \Omega_\Sigma^0, \omega_X^\bullet) \to \mathbf R\mathscr Hom_{\mathscr O_X}(\underline \Omega_X^0, \omega_X^\bullet) \to \mathbf R\pi_*\omega_{\widetilde X}(E)[n] \xrightarrow{+1}$$ obtained by dualizing the triangle of Theorem \ref{theorem Du Bois properties}\ref{theorem right triangle log zero Du Bois} for $p = 0$.  By \cite[Corollary 3.7]{kovacs2010canonical}, $R^j\mathscr Hom_{\mathscr O_X}(\underline \Omega_\Sigma^0, \omega_X^\bullet) = 0$ for $j < -\dim \Sigma$.  Specifically, there is a spectral sequence $$E_2^{p,q} = R^p\mathscr Hom_{\mathscr O_Y}(\mathscr H^{-q}\underline \Omega_\Sigma^0, \omega_Y^\bullet) \Rightarrow R^{p+q}\mathscr Hom_{\mathscr O_Y}(\underline \Omega_\Sigma^0, \omega_Y^\bullet).$$  Note that $\mathrm{Supp}~E_2^{p,q} \ne 0$ if $j = -\dim \Sigma,...,0$, whence $\dim \mathrm{Supp}~E_2^{p,q} = 0$ if $p < \dim \Sigma - q$ by Theorem \ref{theorem Du Bois properties}\ref{theorem property dim support Du Bois}.  Thus $E_\infty^{p,q} = 0$ in this range, which gives the desired vanishing on $X$.
\end{proof}

\noindent \textit{Proof of Theorem \ref{theorem logarithmic extension intro}}.  By assumption, $\underline \Omega_X^0$ has no higher cohomology, and $\dim \Sigma \le n-2$ since $X$ is normal.  Therefore logarithmic extension holds in degree $n$ by Lemma \ref{lemma KSS variant} and Proposition \ref{proposition kebekus criterion complexes}.  By \cite[Theorem 1.5]{kebekus2021extending}, logarithmic extension holds in all degrees $0 \le p \le n$. \qed 

\section{Holomorphic Extension for Du Bois Singularities}

\subsection{Consequences of Logarithmic Extension} Let $X$ be a normal variety of dimension $n$ and $\pi:\widetilde X \to X$ a log-resolution of singularities.  For dimension reasons and (\ref{equation logarithmic forms logarithmic hodge module}), there is a quasi-isomorphism $$\pi_*\omega_{\widetilde X}(E) \cong \mathrm{gr}_{-n}^F\mathrm{DR}(\mathcal N_0).$$  If $\pi_*\omega_{\widetilde X}(E)$ is reflexive, we get a family of inequalities \begin{equation*}
    \dim \mathrm{Supp}~\mathscr H^j\mathrm{gr}_p^F\mathrm{DR}(\mathbf D_Y(\mathcal N_0)(-n)) \le -(j + p +2)
\end{equation*} for $p + j \ge -n+1$, see \cite[Proposition 9.10]{kebekus2021extending}.  This leads to an important vanishing as a consequence of Theorem \ref{theorem logarithmic extension intro}.

\begin{corollary} \label{corollary vanishing Du Bois singularities log hodge module}
Let $X$ be a normal complex analytic variety of dimension $n$ with at worst Du Bois singularities.  Then $$\mathscr H^0\mathrm{gr}_{-1}^F\mathrm{DR}(\mathbf D_Y(\mathcal N_0)(-n)) = 0,$$ where $X \subset Y$ is a (locally) closed embedding into a smooth manifold $Y$ and $\mathcal N_0$ is the $\mathscr D_Y$-module underlying the logarithmic mixed Hodge module.  
\end{corollary}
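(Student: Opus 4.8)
The plan is to obtain the stated vanishing from the reflexivity of $\pi_*\omega_{\widetilde X}(E)$ --- which is precisely where the Du Bois hypothesis is used, through Theorem~\ref{theorem logarithmic extension intro} --- together with the principle that reflexivity of a single graded piece of $\mathrm{DR}(\mathcal N_0)$ forces support bounds on every graded piece of the dual, after which the corollary falls out by specializing those bounds to the one index pair that isolates $\mathscr H^0\mathrm{gr}_{-1}^F\mathrm{DR}(\mathbf D_Y(\mathcal N_0)(-n))$.

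First I would establish the reflexivity input. Since $X$ has at worst Du Bois singularities, Theorem~\ref{theorem logarithmic extension intro} gives logarithmic extension in all degrees, in particular in degree $n = \dim X$: for a log-resolution $\pi:\widetilde X \to X$ with exceptional divisor $E$, the inclusion $\pi_*\Omega_{\widetilde X}^n(\log E) = \pi_*\omega_{\widetilde X}(E) \hookrightarrow \Omega_X^{[n]}$ is an isomorphism. The target $\Omega_X^{[n]}$ is one of the sheaves of reflexive differentials, hence reflexive, so $\pi_*\omega_{\widetilde X}(E)$ is reflexive; and by the quasi-isomorphism $\pi_*\omega_{\widetilde X}(E) \cong \mathrm{gr}_{-n}^F\mathrm{DR}(\mathcal N_0)$ recorded just above, the graded piece $\mathrm{gr}_{-n}^F\mathrm{DR}(\mathcal N_0)$ is reflexive. (If $\Sigma = \emptyset$ there is nothing to prove, so I may assume $\Sigma \neq \emptyset$, whence $n \ge 2$ --- this is what legitimizes the index arithmetic below. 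All of this is local on $X$, so the passage between the algebraic and analytic settings is harmless.)

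Next I would apply \cite[Proposition~9.10]{kebekus2021extending}: reflexivity of $\mathrm{gr}_{-n}^F\mathrm{DR}(\mathcal N_0)$, together with the duality isomorphism (\ref{equation duality mixed}) for the mixed Hodge module $N_0$, yields
\[
\dim \mathrm{Supp}~\mathscr H^j\mathrm{gr}_p^F\mathrm{DR}(\mathbf D_Y(\mathcal N_0)(-n)) \le -(j+p+2) \qquad \text{whenever } p + j \ge -n+1 .
\]
Setting $j = 0$, $p = -1$ is permitted because $p + j = -1 \ge -n+1$ when $n \ge 2$, and the bound becomes $\dim \mathrm{Supp}~\mathscr H^0\mathrm{gr}_{-1}^F\mathrm{DR}(\mathbf D_Y(\mathcal N_0)(-n)) \le -1$; a coherent sheaf with support of dimension $\le -1$ vanishes, which is the assertion.

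The substantive content of the corollary is thus front-loaded entirely into Theorem~\ref{theorem logarithmic extension intro} (hence Lemma~\ref{lemma KSS variant} and the Kov\'acs--Schwede--Smith method) and into the propagation mechanism of \cite[Proposition~9.10]{kebekus2021extending}; granting those, what remains is pure bookkeeping. The one point I would check with care --- and the only place where an error could creep in --- is the normalization of the Tate twist $(-n)$ and of the cohomological shifts, which must be arranged so that $\mathrm{gr}_{-n}^F\mathrm{DR}(\mathcal N_0)$ is $\pi_*\omega_{\widetilde X}(E)$ placed in cohomological degree $0$; only then does the substitution $(j,p) = (0,-1)$ land on exactly the sheaf named in the statement rather than on an adjacent graded term. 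Beyond that I anticipate no obstacle.
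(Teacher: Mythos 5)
Your proposal is correct and follows exactly the route the paper takes: Theorem \ref{theorem logarithmic extension intro} gives reflexivity of $\pi_*\omega_{\widetilde X}(E) \cong \mathrm{gr}_{-n}^F\mathrm{DR}(\mathcal N_0)$, and \cite[Proposition 9.10]{kebekus2021extending} then yields the family of support bounds $\dim \mathrm{Supp}~\mathscr H^j\mathrm{gr}_p^F\mathrm{DR}(\mathbf D_Y(\mathcal N_0)(-n)) \le -(j+p+2)$ for $p+j \ge -n+1$, which at $(j,p)=(0,-1)$ forces the stated vanishing. Your attention to the normalization and to the harmless case $n \le 1$ is exactly the right bookkeeping; there is nothing to add.
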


This is the main component to beginning the induction for the holomorphic extension property for Du Bois singularities, using the relationship between the weight filtration of the logarithmic Hodge module and the intersection Hodge module. 

\begin{proposition} \label{proposition vanishing intersection hodge module Du Bois}
Let $X$ be a normal variety of dimension $n$ with at worst Du Bois singularities, and let $\mathrm{IC}_X = (\mathcal IC_X, F^\bullet, IC_X)$ be the intersection Hodge module.  Then $$\mathscr H^0\mathrm{gr}_{-1}^F\mathrm{DR}(\mathcal{IC}_X) = 0.$$
\end{proposition}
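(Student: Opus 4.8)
The plan is to relate the Hodge filtration on the intersection $\mathscr D_Y$-module $\mathcal{IC}_X$ to the one on the logarithmic $\mathscr D_Y$-module $\mathcal N_0$, and then invoke Corollary \ref{corollary vanishing Du Bois singularities log hodge module}. Both $\mathrm{IC}_X$ and $N_0$ are mixed Hodge modules with strict support (resp. support) $X$ built from the variation $\mathbb Q_U[n]$ on the regular locus $U = X_{\mathrm{reg}}$; the logarithmic module $N_0$ is $\mathbf Rj_*\mathbb Q_U[n]$ pushed forward, so it sits in a weight filtration whose lowest graded piece — the one of weight $n$ — is exactly $\mathrm{IC}_X$. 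Concretely, there is a natural surjection of mixed Hodge modules $N_0 \twoheadrightarrow \mathrm{IC}_X$ (the quotient by $W_{n-1}N_0$, since $\mathrm{gr}^W_n N_0 = \mathrm{IC}_X$ by decomposition by strict support and the fact that $N_0$ has no sub-object supported on $\Sigma$), with kernel $W_{n-1}N_0$ supported set-theoretically on $\Sigma$. Dually, $\mathbf D_Y(N_0)$ has $\mathrm{gr}^W_{-n}\mathbf D_Y(N_0) = \mathbf D_Y(\mathrm{IC}_X) = \mathrm{IC}_X(n)$ as its highest weight piece, so there is an injection $\mathrm{IC}_X(n) \hookrightarrow \mathbf D_Y(N_0)$ with cokernel $W_{-1}\big(\mathbf D_Y(N_0)(-n)\big)$ again supported on $\Sigma$.

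Next I would pass to graded pieces of the de Rham complex and extract the bottom cohomology sheaf. Applying $\mathrm{gr}_{-1}^F\mathrm{DR}(-)$ to the short exact sequence of $\mathscr D_Y$-modules underlying $\mathrm{IC}_X(n) \hookrightarrow \mathbf D_Y(N_0)(-n)$ gives a distinguished triangle, and taking $\mathscr H^0$ yields an exact sequence
\begin{equation*}
\mathscr H^{-1}\mathrm{gr}_{-1}^F\mathrm{DR}(\mathcal Q) \to \mathscr H^0\mathrm{gr}_{-1}^F\mathrm{DR}(\mathcal{IC}_X) \to \mathscr H^0\mathrm{gr}_{-1}^F\mathrm{DR}(\mathbf D_Y(\mathcal N_0)(-n)),
\end{equation*}
where $\mathcal Q$ is the $\mathscr D_Y$-module underlying the cokernel $Q = W_{-1}(\mathbf D_Y(N_0)(-n))$. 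The third term vanishes by Corollary \ref{corollary vanishing Du Bois singularities log hodge module}. It therefore suffices to show $\mathscr H^{-1}\mathrm{gr}_{-1}^F\mathrm{DR}(\mathcal Q) = 0$. Since $Q$ is a mixed Hodge module supported on $\Sigma$, which has codimension $c = \mathrm{codim}_X(\Sigma) + c_{X\subset Y} \ge 2 + (\dim Y - n)$ in $Y$, the support condition \eqref{equation perverse support} for perverse sheaves gives $\dim \mathrm{Supp}~\mathscr H^j\mathrm{DR}(\mathcal Q) \le -j$, hence $\mathscr H^j\mathrm{DR}(\mathcal Q) = 0$ for $j > -c_{X\subset Y} - \dim\Sigma$; combined with the standard bound on the amplitude of $\mathrm{gr}^F$ and the fact that $Q$ is supported in the right range (dimension of $\Sigma$ is at most $n - 2$), one controls $\mathscr H^{-1}\mathrm{gr}_{-1}^F\mathrm{DR}(\mathcal Q)$. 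More directly: after possibly replacing $Y$ by a small embedding, $\mathrm{gr}_{-1}^F\mathrm{DR}(\mathcal Q)$ is a complex concentrated in degrees $\le 0$ whose cohomology sheaves in negative degree are supported on $\Sigma$ but whose dimensions are forced to be too large by \eqref{equation perverse support} unless they vanish — so $\mathscr H^{-1} = 0$ because $\dim \Sigma \le n-2 < $ (the bound $-j = 1$ would require).

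The main obstacle I expect is making the weight-filtration comparison between $N_0$ and $\mathrm{IC}_X$ precise enough at the level of $F$-filtered de Rham complexes — in particular, knowing that the map $\mathrm{IC}_X(n) \hookrightarrow \mathbf D_Y(N_0)(-n)$ is strict for both $F$ and $W$ so that passing to $\mathrm{gr}^F_{-1}\mathrm{DR}$ preserves exactness, and controlling the Tate twists and shifts correctly. This strictness is part of Saito's theory (morphisms of mixed Hodge modules are bistrict), so it should be citable, but one must be careful that the twist $(-n)$ shifts $F$ by $n$ and matches the normalization in \eqref{equation logarithmic support condition} and Corollary \ref{corollary vanishing Du Bois singularities log hodge module}. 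Once that bookkeeping is done, the cokernel $Q$ is supported in codimension $\ge 2$ inside $X$ and the perverse support estimate \eqref{equation perverse support} kills the remaining $\mathscr H^{-1}$ term, completing the argument.
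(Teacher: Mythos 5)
There is a genuine gap, and it comes from the direction of your comparison map. You write that $\mathrm{gr}^W_{-n}\mathbf D_Y(N_0)=\mathbf D_Y(\mathrm{IC}_X)=\mathrm{IC}_X(n)$ is the highest weight piece of $\mathbf D_Y(N_0)$ ``so there is an injection $\mathrm{IC}_X(n)\hookrightarrow \mathbf D_Y(N_0)$.'' For an \emph{increasing} weight filtration the top graded piece is a \emph{quotient}, not a sub-object: since $N_0$ has weights $\ge n$ with $W_nN_0\cong \mathrm{IC}_X$ a sub-object, dualizing gives a \emph{surjection} $\mathbf D_Y(N_0)(-n)\twoheadrightarrow \mathrm{gr}^W_n\bigl(\mathbf D_Y(N_0)(-n)\bigr)\cong \mathrm{gr}^W_n\mathcal N_0\cong \mathcal{IC}_X$ whose kernel is $W_{n-1}$ (not $W_{-1}$, another sign the twist bookkeeping has slipped). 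This reversal is not cosmetic. With the correct surjection, the long exact sequence for $\mathrm{gr}^F_{-1}\mathrm{DR}(-)$ reads
\begin{equation*}
\mathscr H^0\mathrm{gr}_{-1}^F\mathrm{DR}\bigl(\mathbf D_Y(\mathcal N_0)(-n)\bigr)\to \mathscr H^0\mathrm{gr}_{-1}^F\mathrm{DR}(\mathcal{IC}_X)\to \mathscr H^1\mathrm{gr}_{-1}^F\mathrm{DR}\bigl(W_{n-1}\mathbf D_Y(\mathcal N_0)(-n)\bigr),
\end{equation*}
and the third term vanishes for trivial degree reasons ($\mathrm{gr}^F\mathrm{DR}$ of any $\mathscr D_Y$-module is concentrated in degrees $\le 0$), so Corollary \ref{corollary vanishing Du Bois singularities log hodge module} finishes the proof --- this is exactly the paper's argument. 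With your reversed arrow you are instead left having to kill $\mathscr H^{-1}\mathrm{gr}^F_{-1}\mathrm{DR}(\mathcal Q)$ for a mixed Hodge module $Q$ supported on $\Sigma$, and that term has no reason to vanish.

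Your attempted justification for that vanishing does not work. The support estimate (\ref{equation perverse support}) applies to the full de Rham complex $\mathrm{DR}(\mathcal M)$ (equivalently to the underlying perverse sheaf), not to its individual $F$-graded pieces; the paper itself points out that $R^1\pi_*\Omega^1_{\widetilde X}\ne 0$ for ADE surface singularities, which is exactly a nonvanishing cohomology sheaf of a graded piece violating the perverse bound. Even if the bound did apply, at $j=-1$ it only gives $\dim\mathrm{Supp}\le 1$, which forces nothing when $\Sigma$ has components of dimension $\le 1$ (isolated singularities being the most relevant case for the induction in \S\ref{subsection proof of 1.2}). So the final step of your argument, as written, is both inapplicable and insufficient. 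The fix is simply to use the surjection rather than a (nonexistent) injection; the rest of your setup --- the identification $W_nN_0\cong\mathrm{IC}_X$, exactness of $\mathrm{gr}^F\mathrm{DR}$ on mixed Hodge modules via bistrictness, and the appeal to Corollary \ref{corollary vanishing Du Bois singularities log hodge module} --- is the right framework.
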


\begin{proof}
Let $X \subset Y$ be a (locally) closed embedding into a smooth complex manifold $Y$.  Let $N_0 = (\mathcal N_0, F^\bullet, W_\bullet, \mathrm{rat}~N_0)$ be the logarithmic mixed Hodge module.  By construction, $\mathrm{gr}_{-n}\mathbf D_Y(\mathcal N_0) = \mathbf D_Y(W_n\mathcal N_0)$.  Since $\mathrm{gr}_{-1}^F\mathrm{DR}(-)$ is an exact functor, there is an exact sequence \begin{equation*}
\begin{split}
    \mathscr H^0\mathrm{gr}_{-1}^F\mathrm{DR}(\mathbf D_Y(\mathcal N_0)(-n)) &\to \mathscr H^0\mathrm{gr}_{-1}^F(\mathrm{gr}_n^W\mathbf D_Y(\mathcal N_0)(-n))\\ & \to \mathscr H^1\mathrm{gr}_{-1}^F\mathrm{DR}(W_{n-1}\mathbb D(\mathcal N_0)(-n)).
    \end{split}
\end{equation*} The last term must be zero for degree reasons, and so by Corollary \ref{corollary vanishing Du Bois singularities log hodge module} we get the additonal vanishing $\mathscr H^0\mathrm{gr}_{-1}^F\mathrm{DR}(\mathrm{gr}_n^W\mathbf D_Y(\mathcal N_0)(-n)) = 0$.  By (\ref{equation duality mixed}), we have an isomorphism $\mathrm{gr}_n^W\mathbf D_Y(\mathcal N_0)(-n) \cong \mathrm{gr}_n^W\mathcal N_0$.  The claim follows from the isomorphism $$\mathrm{gr}_{-1}^F\mathrm{DR}(\mathrm{gr}_n^W\mathcal N_0) \cong_{\mathrm{qis}} \mathrm{gr}_{-1}^F\mathrm{DR}(\mathcal{IC}_X),$$ see \cite[Proposition 9.8]{kebekus2021extending}.  
\end{proof}

\subsection{Proof of Theorem \ref{theorem holomorphic extension intro}}\label{subsection proof of 1.2} Let $X$ be a normal variety of dimension $n$ with Du Bois singularities.  Recall from Proposition \ref{proposition extension criterion Hodge module} that holomorphic extension holds in degree $p$ if \begin{equation}\label{equation dim support proof 1.2}\mathrm{dim}~\mathrm{Supp}~\mathscr H^{j+n-p}\mathrm{gr}_{-(n-p)}^F\mathrm{DR}(\mathcal{IC}_X) \le -(j+2)\end{equation}  Note that this condition vacuously holds for isolated singularities except possibly when $(p,j) = (n-1,0)$, which is covered by Proposition \ref{proposition vanishing intersection hodge module Du Bois}.  Therefore, Theorem \ref{theorem holomorphic extension intro} holds for isolated singularities (compare to \S\ref{subsection intro proof 1.2}). \newline

\noindent \textit{Proof of Theorem \ref{theorem holomorphic extension intro}}. We have demonstrated (\ref{equation dim support proof 1.2}) for normal surfaces with Du Bois singularities. We proceed then by induction on the pair $(n, \dim \Sigma)$, ordered lexicographically.  If $X \subset Y$ a closed embedding into a smooth ball $Y = \mathbb C^{n+c}$, let $H \subset Y$ be a general hyperplane section and consider the exact sequence of complexes $$0 \to N_{H|Y}^*\otimes_{\mathscr O_H} \mathrm{gr}_{p+1}^F\mathrm{DR}(\mathcal{IC}_X|_H) \to \mathscr O_H \otimes_{\mathscr O_Y}\mathrm{gr}_p^F\mathrm{DR}(\mathcal{IC}_X) \to \mathrm{gr}_p^F\mathrm{DR}(\mathcal{IC}_X|_H)[1] \to 0$$ of (\ref{equation conormal cutdown}).  Since $H$ is generic, the restricted Hodge module $\mathrm{IC}_X|_H = \mathrm{IC}_{X\cap H}$ since we can assume that $(X\cap H)_{\mathrm{reg}} = X_{\mathrm{reg}}\cap H$ (see Definition \ref{definition intersection hodge module}).  Since $H$ is Du Bois by \ref{theorem Du Bois properties}\ref{theorem Du Bois base point free}, then (\ref{equation dim support proof 1.2}) and induction imply $$\dim \mathrm{Supp}~\mathscr O_H\otimes \mathscr H^{j + (n-1) - p}\mathrm{gr}_{-(n-p)}^F\mathrm{DR}(\mathcal{IC}_X) \le -(j+2)$$ for $p = \mathrm{codim}_X(\Sigma)$.  Therefore \begin{equation}\label{equation minimum support condition}
    \dim \mathrm{Supp}~\mathscr H^{j + n-p}\mathrm{gr}_{-(n-p)}^F\mathrm{DR}(\mathcal{IC}_X) \le -(j+2),
\end{equation} 
except possibly in the case $(p,j) = (1,0)$.  But this is exactly Proposition \ref{proposition vanishing intersection hodge module Du Bois}. \qed  

\subsection{Log Forms to Holomorphic Forms} Suppose now that $X$ is an arbitrary normal complex variety with possibly non-Du Bois singularities, and let $\Sigma$ be the singular locus.  It is known that holomorphic extension can fail for $p \ge \mathrm{codim}_X(\Sigma) - 1$.  We consider then the inclusion (\ref{equation inclusion holomorphic log}) and give a weakening of Theorem \ref{theorem holomorphic extension intro} to arbitrary singularities.  We remark that this theorem is optimal even for Du Bois singularities.  \newline

\noindent \textit{Proof of Theorem \ref{theorem log forms to holomorphic forms}}. Let $\mathrm{IC}_X$ and $N_0$ be the intersection and logarithmic Hodge modules of the normal variety $X$.  We note there is a canonical morphism of (mixed) Hodge modules $\mathcal{IC}_X \to N_0$ obtained by the isomorphism $W_n N_0 \cong \mathrm{IC}_X$ \cite[Proof of (9.8.1)]{kebekus2021extending}.  The problem is local, so let $X \subset Y$ be a closed embedding of $X$ into a ball $Y \cong \mathbb C^{n + c}$, and let $H \subset Y$ be a generic hyperplane section.  For each $p$, there is a commutative diagram of exact sequences

\[ \begin{tikzcd}
    0 \arrow{d} &0 \arrow{d} \\
 N_{H|Y}^* \otimes_{\mathscr O_H} \mathscr H^{-(n-p)}\mathrm{gr}_{-p+1}^F\mathrm{DR}(\mathcal{IC}_{X\cap H}) \arrow{r} \arrow{d}   & N_{H|Y}^*\otimes_{\mathscr O_H} \mathscr H^{-(n-p)}\mathrm{gr}_{-p+1}^F\mathrm{DR}(\mathcal{N}_0|_H) \arrow{d} \\
  \mathscr O_H \otimes_{\mathscr O_Y} \mathscr H^{-(n-p)} \mathrm{gr}_{-p}^F\mathrm{DR}(\mathcal{IC}_X) \arrow{r} \arrow{d}  & \mathscr  O_H \otimes_{\mathscr O_Y} \mathscr H^{-(n-p)}\mathrm{gr}_{-p}^F\mathrm{DR}(\mathcal{N}_0) \arrow{d} \\
   \mathscr H^{-(n-p)+1}\mathrm{gr}_{-p}^F\mathrm{DR}(\mathcal{IC}_{X\cap H}) \arrow{r} \arrow{d} & \mathscr H^{-(n-p)+1}\mathrm{gr}_{-p}^F\mathrm{DR}(\mathcal{N}_0|_H) \arrow{d} \\
   \mathscr H^{-(n-p)+1}\mathrm{gr}_{-p + 1}^F\mathrm{DR}(\mathcal{IC}_{X\cap H})\arrow{r} & \mathscr H^{-(n-p)+1}\mathrm{gr}_{-p + 1}^F\mathrm{DR}(\mathcal{N}_0|_H)
\end{tikzcd}\] coming from the exact triangle associated to the restricted Hodge module sequence (\ref{equation conormal cutdown}).  Again, $\mathcal{IC}_{X\cap H}$ (resp. $\mathcal N_0|_H$) is the intersection $\mathscr D_Y$-module (resp. logarithmic $\mathscr D_Y$-module) of $X\cap H$.  By (\ref{equation holomorphic forms intersection complex}) and (\ref{equation logarithmic forms logarithmic hodge module}), the first three rows of the commutative diagram agree with (\ref{equation conormal form diagram}).

We note that if $X$ has isolated singularities, the claim is known to hold \cite[Thm. (1.3)]{van1985extendability}.  Continuing by induction, consider the above diagram for $p = \mathrm{codim}_X(\Sigma) - 1$. The first horizontal morphism is an isomorphism by Flenner's criterion, and the third horizontal morphism is an isomorphism by induction.  For this $p$, we also have $$\mathscr H^{-(n-p) + 1} \mathrm{gr}_{-p+1}^F \mathrm{DR}(\mathcal{IC}_{X\cap H}) = \mathscr H^{-\dim \Sigma}\mathrm{gr}^F_{-((n-1) - \dim \Sigma)}\mathrm{DR}(\mathcal{IC}_{X\cap H}).$$ For dimension reasons, this term vanishes, and we see the map $$\mathscr O_H \otimes_{\mathscr O_Y} \pi_*\Omega_{\widetilde X}^p \to (\pi|_H)_*\Omega_{\widetilde H}^p$$ is surjective. 

Next, we follow an argument in \cite{graf2013optimal}.  Let $E = \sum E_i$ be the exceptional divisor of $\pi$, and let $\alpha$ be section in $H^0(\widetilde X,\Omega_{\widetilde X}^p(\log E))$.  By definition, there are integers $m_i \in \left\{0,1\right\}$ such that $\gamma$ is a section in $H^0(\widetilde X, \Omega_{\widetilde X}^p(\sum m_i E_i))$.  Consider the diagram \[ \begin{tikzcd}
    N_{H|Y}^*\otimes_{\mathscr O_H} (\pi|_H)_*\Omega_{\widetilde H}^{p-1} \arrow{d} \arrow{r} & \mathscr O_H \otimes \pi_*\Omega_{\widetilde X}^p \arrow{r} \arrow{d}{\alpha} & (\pi|_H)_*\Omega_{\widetilde H}^p \arrow{d} \\
    N_{H|Y}^*\otimes_{\mathscr O_Y} \pi_*(\Omega_{\widetilde H}^{p-1}\otimes \mathscr O(\sum m_i E_i)) \arrow{r} & \mathscr O_H \otimes \pi_*\Omega_{\widetilde X}^p(\sum m_iE_i) \arrow{r}{\beta} & (\pi|_H)_*(\Omega_{\widetilde H}^p\otimes \mathscr O(\sum m_iE_i)).
\end{tikzcd}\] We assume again the assumption is true in dimension $\dim X - 1$.  In particular, the image $\beta(\alpha(\gamma))$ factors through $H^0(\widetilde H, \Omega_{\widetilde H}^p)$.  Supposing by contradiction that the $m_i$ are not all 0, this implies the existence of a nonzero section of $H^0(\widetilde H, N_{\widetilde H|\widetilde X}^*\otimes \Omega_{\widetilde X}^{p-1}\otimes(\sum m_iE_i))$.  Since the pullback of $N_{H|X}$ is just $N_{\widetilde H|\widetilde X}$, this implies $H^0(\widetilde H, \Omega_{\widetilde H}^{p-1}\otimes \mathscr O(\sum m_iE_i))$. By induction and our choice of $p$, it is clear the morphism $$(\pi|_H)_*\Omega_{\widetilde H}^{p-1} \to (\pi|_H)_*(\Omega_{\widetilde H}^{p-1}\otimes \mathscr O(\sum m_iE_i))$$ is surjective.  We have also seen the morphism $\mathscr O_H \otimes \pi_*\Omega_{\widetilde X}^{p-1} \to (\pi|_H)_*\Omega_{\widetilde H}^{p-1}$ is surjective for our choice of $p$.  This produces a non-zero section of $H^0(\widetilde H, \Omega_{\widetilde H}^{p-1}(\sum m_iE_i)|_{\widetilde H})$ which vanishes under $\beta$.  We can continue to iterate this process until we receive a non-zero section of $H^0(\widetilde H, \mathscr O_{\widetilde H}(\sum m_iE_i))$.  In fact, we can find for each $E_j$ with $m_j = 1$ that $$H^0(\widetilde H, \mathscr O(m_iE_i)|_{E_j}) \ne 0.$$ This is a contradiction to the negativity lemma \cite[Proposition 7.5]{greb2010extension}. 
\qed \newline 

This gives a different proof of Theorem \ref{theorem holomorphic extension intro} for Du Bois singularities by Theorem \ref{theorem logarithmic extension intro}.  We emphasize that this proof is weaker, however, since it does not imply support condition (\ref{equation dim support proof 1.2}).  

\section{Holomorphic Extension for (Weakly) $m$-Du Bois Singularities}

\subsection{Rational v.s. Weakly $m$-Du Bois Singularities} \label{subsection rational vs weakly}

A fundamental aspect of the Hodge theory of singularities is the relationship between the Du Bois property and the singularities of the MMP: Du Bois singularities are very close to log-canonical singularities, as both agree in the normal, quasi-Gorenstein case.  The gap between rational and Du Bois singularities is much larger.  Recall by Kempf's criterion that a variety has rational singularities if and only if $X$ is Cohen-Macaulay $\pi_*\omega_{\tilde X}$ is reflexive.  We have already seen a Du Bois singularity failing holomorphic extension, but they can also fail the Cohen-Macaulay property: the affine cone of a compact hyperk\"ahler manifold in dimension $ \ge 4$ is Du Bois but not Cohen-Macaulay.  

Therefore, it seems interesting to ask how closely (weakly) $m$-Du Bois singularities are from having rational singularities.  Even for $k > 0$, there is a disconnect.  The following affine cone examples were described in \cite{tighe2023thesis}, see also \cite[\S 7.5]{shen2023k}.

\begin{example} \label{example 1-Du Bois not Rational}
        Let $Y$ be a projective K3 surface.  For any ample bundle $L$, the affine cone $X$ is Cohen-Macaulay and Du Bois by Kodaira vanishing.  Suppose further that $Y$ has Picard rank 1 and the degree $L^2 \ge 24$.  Then the affine cone is also weakly 1-Du Bois, and $\mathscr H^0\underline \Omega_X^1$ is reflexive.  Indeed, it is sufficient to show that $H^1(Y, \Omega_Y^1\otimes L^m) = 0$ for every $m > 0$.  This follows from \cite[Theorem 3.2 and Theorem 3.5]{totaro2020bott}. The first theorem states that, under these assumptions, the vanishing holds for $H^1(Y, \Omega_Y^1\otimes L) = 0$, considering the pair $(Y,L)$ as a polarized K3 surface.  The second theorem verifies the vanishing $H^1(Y, \Omega_Y^1\otimes L^m) = 0$ for $m > 1$ assuming the first vanishing.  Since $H^0(Y, \Omega_Y^1) = 0$, this implies that $X$ is weakly 1-Du Bois.  But $H^0(Y, \omega_Y) \cong \mathbb C$, and so $X$ does not have rational singularities. By Theorem \ref{theorem holomorphic extension intro}, $\pi_*\Omega_{\widetilde X}^2$ is reflexive for any resolution of singularities.   
        \end{example}

        \begin{example} \label{example Hypersurface Cone Rational}
        Here is an example of an affine cone singularity which is rational but is not $m$-Du Bois for some $1 \le k \le n-1$.  The example is essentially given in \cite[\S 4.1]{buch1997frobenius} and is related to the failure of Bott vanishing for non-projective spaces.  Let $Y \subset \mathbb P^4$ be a smooth quadric hypersurface.  Let $L = \mathscr O_Y(1)$.  Then the affine cone $X$ has rational singularities since $Y$ is a Fano variety.  The cone is also weakly 1-Du Bois.  To see this\footnote{Since $Y$ is a hypersurface, then so will the affine cone $X$.  Therefore this follows from \cite{mustata2021bois}}, we consider the cohomology sequence of $$0 \to \mathscr O_Y(2 + m) \to \Omega_{\mathbb P^4}^1\otimes \mathscr O(m)\otimes \mathscr O_Y \to \Omega_Y^1\otimes \mathscr O_Y(m) \to 0.$$  The vanishing $H^i(Y, \Omega_Y^1\otimes \mathscr O_Y(m)) = 0$ follows then by Bott vanishing on $\mathbb P^4$ and the rationality of $(X,v)$.  On the other hand, $H^1(Y, \Omega_Y^2\otimes \mathscr O_Y(1)) \cong \mathbb C$.  This implies that $X$ is not weakly $2$-Du Bois.
        \end{example}

In summary: weakly $m$-Du Bois singularities need not have rational singularities, nor are rational singularities weakly $m$-Du Bois for all $k$.  What is interesting about Example \ref{example 1-Du Bois not Rational} is that the 3-fold singularity is weakly $3$-Du Bois, and yet holomorphic extension does not hold in degree 3.  This is because $\mathscr H^0\underline \Omega_X^2$ is also not reflexive in this case, a necessary condition for holomorphic extension to hold for $m$-Du Bois singularities.

\subsection{Isolated Singularities and Depth}\label{subsection isolated singularities and depth} To further highlight what Example \ref{example 1-Du Bois not Rational} tells us, we consider the relationship between $\mathscr H^0\underline \Omega_X^p$, $\pi_*\Omega_{\widetilde X}^p$, and $\mathrm{depth}(\mathscr O_X)$.  Let $X$ be a normal variety with isolated singularities.  If $X$ is weakly $m$-Du Bois, then Theorem \ref{theorem Du Bois properties}\ref{theorem right triangle resolution exceptional divisor} implies a short exact sequence $$0 \to \mathscr H^0\underline \Omega_X^p \to \mathscr \pi_*\Omega_{\tilde X}^p \to \pi_*\underline \Omega_E^p \to 0$$ for $0 \le p \le k$.  An immediate consequence of Theorem \ref{theorem Du Bois properties}\ref{theorem Du Bois open restriction} is that $\pi_*\underline \Omega_E^p = 0$ if $\mathscr H^0\underline \Omega_X^p$ is reflexive.  We note that $\underline \Omega_E^p = \Omega_E^p/\mathrm{tor}$ is just a sheaf; if $E$ is an snc divisor, then $H^0(E, \underline \Omega_E^p)$ vanishes by Hodge theory if $H^p(E, \mathscr O_E) = 0$.  This for example holds if $\mathrm{depth}(\mathscr O_X) \ge p + 2$ (compare with \cite[Theorem G]{mustata2021hodge}).

Note that even if we assume $X$ is Cohen-Macaulay that we do not get the vanishing of $\pi_*\underline \Omega_E^{\dim X-1}$: this is precisely what happens in Example \ref{example 1-Du Bois not Rational}.  The reflexivity of $\mathscr H^0\underline \Omega_X^{\dim X-1}$ appears independent of any condition on the depth, similar to the sheaf $\pi_*\omega_{\widetilde X}$.  

\subsection{A criterion for Holomorphic Extension for Weakly $m$-Du Bois Singularities}

To summarize, the weaker version of the higher Du Bois property does not appear to detect the holomorphic extension property.  However, we can remedy this by requiring $\mathscr H^0\underline \Omega_X^k$ to be reflexive.  The following is an extension of Theorem \ref{theorem holomorphic extension intro} for weakly $m$-Du Bois singularities.

\begin{theorem} \label{theorem holomorphic extension criterion k Du Bois}
Let $X$ be a normal complex algebraic variety of dimension $n$ with singular locus $\Sigma$.  Suppose $X$ is weakly $m$-Du Bois, where $k = \mathrm{codim}_X(\Sigma)-1$.  If $\mathscr H^0\underline \Omega_X^k$ is reflexive, then $\pi_*\Omega_{\widetilde X}^{k+1}$ is reflexive. 
\end{theorem}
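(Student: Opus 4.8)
The plan is to reduce the statement to surjectivity of the inclusion (\ref{equation inclusion holomorphic log}) in degree $k+1$ and then to run the induction on $(\dim X,\dim\Sigma)$ underlying Theorem \ref{theorem holomorphic extension intro} and Theorem \ref{theorem log forms to holomorphic forms}, with the reflexivity of $\mathscr H^0\underline\Omega_X^k$ playing the role of the vanishing in Proposition \ref{proposition vanishing intersection hodge module Du Bois}. Since $X$ is normal and weakly $m$-Du Bois it is in particular Du Bois, so by Theorem \ref{theorem logarithmic extension intro} $\pi_*\Omega_{\widetilde X}^{k+1}(\log E)\cong\Omega_X^{[k+1]}$; because $\pi_*\Omega_{\widetilde X}^{k+1}$ is torsion-free and agrees with $\Omega_X^{[k+1]}$ over $X_{\mathrm{reg}}$, whose complement has codimension $k+1\ge 2$, its reflexivity is equivalent to (\ref{equation inclusion holomorphic log}) being an isomorphism in degree $k+1$ --- equivalently, by Proposition \ref{proposition extension criterion Hodge module}(i) and the duality (\ref{eqution duality pure}), to the support estimate $\dim\mathrm{Supp}~\mathscr H^{j+n-(k+1)}\mathrm{gr}^F_{-(n-k-1)}\mathrm{DR}(\mathcal{IC}_X)\le-(j+2)$ in the relevant range of $j$, where $n-k-1=\dim\Sigma$. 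A very general hyperplane section $H\subset X$ is again normal with $\mathrm{codim}_H(\Sigma\cap H)=k+1$ and, by Theorem \ref{theorem Du Bois properties}\ref{theorem Du Bois base point free}, again weakly $m$-Du Bois, so the induction will close once I verify that the hypothesis on $\mathscr H^0\underline\Omega_X^k$ descends to $H$.

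For the base case $\dim\Sigma=0$ the only support condition that is not automatic is the one forcing $R^{n-1}\pi_*\mathscr O_{\widetilde X}=0$, which by Grothendieck duality and Grauert--Riemenschneider is precisely the reflexivity of $\pi_*\omega_{\widetilde X}=\underline\Omega_X^n$. Here weakly $(n-1)$-Du Bois and Theorem \ref{theorem Du Bois properties}\ref{theorem right triangle resolution exceptional divisor} (with $p=n-1$) give $0\to\mathscr H^0\underline\Omega_X^{n-1}\to\pi_*\Omega_{\widetilde X}^{n-1}\to\pi_*\underline\Omega_E^{n-1}\to 0$; since $\mathscr H^0\underline\Omega_X^{n-1}$ is reflexive by hypothesis and $\pi_*\Omega_{\widetilde X}^{n-1}$ is reflexive by Theorem \ref{theorem holomorphic extension intro} (as $n-1<\mathrm{codim}_X(\Sigma)=n$), both coincide with $\Omega_X^{[n-1]}$, so $\pi_*\underline\Omega_E^{n-1}=0$. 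Combined with the vanishing $R^{>0}\pi_*\mathscr O_{\widetilde X}(-E)=0$ valid for isolated Du Bois singularities, this should give $R^{n-1}\pi_*\mathscr O_{\widetilde X}=0$; I expect the bookkeeping to mirror the van~Straten--Steenbrink argument recalled in \S\ref{subsection intro proof 1.2}, now one degree higher.

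For the inductive step, cut $X$ by a very general hyperplane $H$ (so $\dim\Sigma\ge 1$ and $\dim H\ge 2$). The remaining point is that $\mathscr H^0\underline\Omega_H^k$ is reflexive. Since $\mathscr H^0\underline\Omega_X^k$ is reflexive it coincides with $\Omega_X^{[k]}$, and the torsion-freeness of $\mathscr H^0\underline\Omega_X^{k-1}$ and $\Omega_X^{[k]}$ turns the base-point-free triangle of Theorem \ref{theorem Du Bois properties}\ref{theorem Du Bois base point free} into a short exact sequence $0\to(\mathscr H^0\underline\Omega_X^{k-1})|_H(-H)\to\Omega_X^{[k]}|_H\to\underline\Omega_H^k\to 0$ for generic $H$. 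Comparing with the conormal sequence over $H\cap X_{\mathrm{reg}}$, using that $\Omega_X^{[k]}|_H$ is reflexive on $H$ (a reflexive sheaf restricts to a reflexive sheaf on a general hyperplane section of dimension $\ge 2$) and that $\underline\Omega_H^k$ is torsion-free, hence embeds in $\Omega_H^{[k]}$, identifies $\underline\Omega_H^k$ with $\Omega_H^{[k]}$; in particular it is reflexive, and the inductive hypothesis now applies to $H$.

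With these reductions in hand, the argument proceeds as in the proof of Theorem \ref{theorem log forms to holomorphic forms}: restricting the conormal triangle (\ref{equation conormal cutdown}) for $\mathcal{IC}_X$ (and for $\mathcal N_0$) to $H$ first gives surjectivity of $\mathscr O_H\otimes\pi_*\Omega_{\widetilde X}^{k+1}\to(\pi|_H)_*\Omega_{\widetilde H}^{k+1}$, and then a Graf-type descent through the form degrees $k+1,k,\dots,0$ produces a contradiction with the negativity lemma \cite[Proposition~7.5]{greb2010extension} unless the logarithmic $(k+1)$-form we started with was already holomorphic. The genuinely new ingredient --- and what I expect to be the main obstacle --- is the borderline vanishing: where the proof of Theorem \ref{theorem holomorphic extension intro} is closed by $\mathscr H^0\mathrm{gr}^F_{-1}\mathrm{DR}(\mathcal{IC}_X)=0$ from Proposition \ref{proposition vanishing intersection hodge module Du Bois}, here one must instead use the reflexivity of $\mathscr H^0\underline\Omega_X^k=\Omega_X^{[k]}$ to control the top cohomology sheaf appearing one degree above form level (morally, the intersection-Hodge-module summand of $R^{1}(\pi|_H)_*\Omega_{\widetilde H}^{k}$). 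Making this precise means pushing the interplay between $\mathcal{IC}_X$, the logarithmic module $\mathcal N_0$, the weight filtration, and the dualities (\ref{equation duality mixed})--(\ref{eqution duality pure}) --- exactly as in \S\ref{subsection proof of 1.2} --- one notch further than for Theorem \ref{theorem holomorphic extension intro}.
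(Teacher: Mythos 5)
Your overall strategy---reduce to the support condition on $\mathrm{gr}^F\mathrm{DR}(\mathcal{IC}_X)$, induct on hyperplane sections via (\ref{equation conormal cutdown}), and feed the reflexivity hypothesis into an isolated-singularities base case---is the same as the paper's. But the two places where you write ``should give'' and ``I expect'' are exactly where the content of the theorem lives, and the first is a genuine gap as stated. In the base case you correctly deduce $\pi_*\underline\Omega_E^{n-1}=\bigoplus_i H^0(E_i,\omega_{E_i})=0$ from the reflexivity of $\mathscr H^0\underline\Omega_X^{n-1}$ (this is the discussion of \S\ref{subsection isolated singularities and depth}), but this vanishing only sees the top-dimensional strata of the snc divisor $E$, whereas $R^{n-1}\pi_*\mathscr O_{\widetilde X}\cong H^{n-1}(E,\mathscr O_E)$ receives contributions from every stratum $E_I$ through the Mayer--Vietoris spectral sequence, namely $H^{n-|I|}(E_I,\mathscr O_{E_I})\cong\overline{H^0(E_I,\omega_{E_I})}$. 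For a general snc variety the implication you invoke is false: two copies of $\mathbb P^2$ glued along an elliptic curve has $H^0(E_i,\omega_{E_i})=0$ for both components but $H^2(E,\mathscr O_E)\cong\mathbb C$. Ruling out such behaviour for the exceptional divisor \emph{is} the base case, so it cannot be deferred to ``bookkeeping.'' The paper's route is genuinely different here: it treats $\mathscr H^0\underline\Omega_X^{n-1}$ as the full complex $\mathbf R\pi_*\Omega_{\widetilde X}^{n-1}(\log E)(-E)$ (using weak $(n-1)$-Du Bois together with Theorem \ref{theorem Du Bois properties}\ref{theorem right triangle log zero Du Bois}), applies Proposition \ref{proposition kebekus sheaf criterion} and Grothendieck duality to the reflexivity hypothesis to obtain $R^{n-1}\pi_*\Omega_{\widetilde X}^1(\log E)=0$, and from the residue sequence and Hodge symmetry on the smooth strata concludes that $\pi_*\omega_{\widetilde X}\hookrightarrow\pi_*\omega_{\widetilde X}(E)$ is onto, whence reflexivity follows because $\pi_*\omega_{\widetilde X}(E)$ is already reflexive by Theorem \ref{theorem logarithmic extension intro}. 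You never dualize the complex $\mathbf R\pi_*\Omega_{\widetilde X}^{n-1}(\log E)(-E)$---you only use its $\mathscr H^0$---and the dual is precisely the information you are missing.

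The second acknowledged gap, the ``borderline vanishing'' needed to close the induction, does not in fact require pushing the Hodge-module dualities ``one notch further'': in the conormal sandwich the only index at which the inductive hypothesis on $X\cap H$ fails to control $\mathscr O_H\otimes\mathscr H^j\mathrm{gr}^F_{-\dim\Sigma}\mathrm{DR}(\mathcal{IC}_X)$ occurs only when $\dim\Sigma=0$, i.e.\ in the base case already handled; this observation is how the paper closes the argument, and your proposal stops short of it. Two smaller points: the first term of the base-point-free triangle in Theorem \ref{theorem Du Bois properties}\ref{theorem Du Bois base point free} is $\underline\Omega_H^{k-1}\otimes\mathscr O_H(-H)$, not the restriction of $\underline\Omega_X^{k-1}$; and your justification that reflexivity of $\mathscr H^0\underline\Omega_X^k$ descends to $H$ exhibits $\mathscr H^0\underline\Omega_H^k$ only as a torsion-free quotient of a reflexive sheaf, which is not by itself enough (ideal sheaves of codimension-two subvarieties are torsion-free quotients of free sheaves), although the paper asserts this descent without proof as well.
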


\begin{proof}
For isolated singularities, we are checking $\pi_*\omega_{\widetilde X}$ is reflexive when $X$ is weakly $(n-1)$-Du Bois and $\mathscr H^0\underline \Omega_X^{n-1}$ is reflexive.  By assumption, the quasi-isomorphism $$\mathscr H^0\underline \Omega_X^{n-1}\cong \mathbf R\pi_*\Omega_{\widetilde X}^{n-1}(\log E)(-E)$$ implies $\dim \mathrm{Supp}~R^j\pi_*\Omega_{\widetilde X}^1(\log E) \le n-j-2$.  In particular, $R^{n-1}\pi_*\Omega_{\widetilde X}^1(\log E) = 0$.  This gives the additional vanishing $R^{n-1}\pi_*\Omega_{E_{(1)}}^1 = 0$ by the residue exact sequence.  We may pass to cohomology by shrinking $X$ as necessary; by Hodge theory, this gives $\pi_*\Omega_{E_{(1)}}^{n-1} = 0$.  This implies $\pi_*\omega_{\widetilde X} \hookrightarrow \pi_*\omega_{\widetilde X}(E)$ is an isomorphism. Since $X$ is assumed to be Du Bois, this proves the result by Theorem \ref{theorem logarithmic extension intro}.  

In order to prove $\pi_*\Omega_{\widetilde X}^{\mathrm{codim}_X(\Sigma)}$ is reflexive, it is sufficient to prove the stronger claim $$\dim \mathrm{Supp}~\mathscr H^j\mathrm{gr}_{-\dim \Sigma}^F\mathrm{DR}(\mathcal{IC}_X) \le -(j+2).$$ For $k = n-1$ (i.e., isolated singularities), this is equivalent to $\pi_*\omega_{\widetilde X}$ being reflexive.  We can therefore proceed by induction as in the proof of Theorem \ref{theorem log forms to holomorphic forms}.  Specifically, the higher Du Bois property is preserved by general hyperplane \cite[Theorem A]{shen2023k}, and the reflexivity of $H^0\underline \Omega_X^{\mathrm{codim}_X(\Sigma)}$ is also preserved by hyperplane.  As a result, we get the desired support condition except possibly when $j = -1$; but this is only relevant when $X$ has isolated singularities. 
\end{proof}

\begin{corollary} \label{corollary dim support condition k Du Bois rational}
Let $X$ be a normal complex variety with weakly $m$-Du Bois singularities for $k \ge \mathrm{codim}_X(\Sigma) - 1$, where $\Sigma$ is the singular locus of $X$.  If $\mathscr H^0\underline \Omega_X^p$ is reflexive for each $0 \le p \le k$, then \begin{equation}\label{equation dim support rational complex}
   \dim\mathrm{Supp}~R^j\pi_*\mathscr O_{\widetilde X} \le n-j-2 
\end{equation} for each $j \le k$.
\end{corollary}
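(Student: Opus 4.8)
The plan is to express $R^j\pi_*\mathscr O_{\widetilde X}$, via Saito's decomposition, in terms of the graded de Rham complex $\mathrm{gr}_0^F\mathrm{DR}(\mathcal{IC}_X)$ of the intersection Hodge module together with pieces supported on $\Sigma$, to feed the hypotheses in through Theorem~\ref{theorem holomorphic extension intro} and Theorem~\ref{theorem holomorphic extension criterion k Du Bois}, and to move the resulting support conditions across the self-duality of $\mathrm{IC}_X$ --- the argument as a whole being an induction on $\dim\Sigma$. For the reduction: by Theorem~\ref{theorem Du Bois properties}\ref{theorem right triangle resolution exceptional divisor} we have $\underline\Omega_X^n\cong\pi_*\omega_{\widetilde X}$, so $R^j\pi_*\mathscr O_{\widetilde X}$ is recovered (up to the standard shift) from $\mathbb D_X(\underline\Omega_X^n)$; fixing a local embedding $X\subset Y$, the case $p=0$ of (\ref{equation saito decomposition}) identifies $\mathbf R\pi_*\mathscr O_{\widetilde X}[n]$ (pushed into $Y$) with $\mathrm{gr}_0^F\mathrm{DR}(\mathcal{IC}_X)\oplus\bigoplus_l\mathrm{gr}_0^F\mathrm{DR}(\mathcal M_l)[-l]$, the $\mathcal M_l$ being supported on $\Sigma$. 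Hence it suffices to bound $\dim\mathrm{Supp}~\mathscr H^{j-n}\mathrm{gr}_0^F\mathrm{DR}(\mathcal{IC}_X)$ and the $\Sigma$-supported summands separately, in the range $1\le j\le k$.

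First I would note that $X$ is Du Bois: normality forces $\mathrm{codim}_X(\Sigma)\ge2$, hence $k\ge1$, so $\mathscr H^j\underline\Omega_X^0=0$ for $j>0$ and the natural map $\mathscr O_X\to\underline\Omega_X^0$ is a morphism of reflexive sheaves on $X$ (here using that $\mathscr H^0\underline\Omega_X^0$ is reflexive) which is an isomorphism away from $\Sigma$, and therefore everywhere. With this, the support-condition analysis behind Theorem~\ref{theorem holomorphic extension intro} (for $p<\mathrm{codim}_X(\Sigma)$) and Theorem~\ref{theorem holomorphic extension criterion k Du Bois} together with its extension to higher degrees (for $\mathrm{codim}_X(\Sigma)\le p\le k+1$) --- carried out, as in the proof of Theorem~\ref{theorem log forms to holomorphic forms}, by reduction to the isolated case through general hyperplane sections, which preserve weak $m$-Du Bois-ness and the reflexivity of the sheaves $\mathscr H^0\underline\Omega_X^p$ by \cite{shen2023k} --- yields the estimate (\ref{equation holomorphic support condition}): $\dim\mathrm{Supp}~\mathscr H^{j+n-p}\mathrm{gr}_{-(n-p)}^F\mathrm{DR}(\mathcal{IC}_X)\le-(j+2)$ for all $j\ge0$ and all $0\le p\le k+1$.

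Next I would dualize. Since $\mathrm{IC}_X\in\mathrm{HM}(Y,n)$, the duality isomorphism (\ref{eqution duality pure}) reads $\mathbf R\mathscr Hom_{\mathscr O_Y}(\mathrm{gr}_{-(n-p)}^F\mathrm{DR}(\mathcal{IC}_X),\omega_Y^\bullet)\cong\mathrm{gr}_{-p}^F\mathrm{DR}(\mathcal{IC}_X)$; feeding the estimates of the previous step into local duality on $Y$ for $p=0,\dots,k+1$ turns them into support bounds for the cohomology sheaves of $\mathrm{gr}_0^F\mathrm{DR}(\mathcal{IC}_X)$, and keeping track of the shifts gives exactly $\dim\mathrm{Supp}~\mathscr H^{j-n}\mathrm{gr}_0^F\mathrm{DR}(\mathcal{IC}_X)\le n-j-2$ for $1\le j\le k$. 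The summands $\mathrm{gr}_0^F\mathrm{DR}(\mathcal M_l)$ I would handle by the induction on $\dim\Sigma$: the base case $\dim\Sigma=0$ is the isolated case, where these summands are skyscrapers whose graded de Rham complexes live in the correct cohomological range; in the inductive step, a general hyperplane section $H$ replaces $(X,\Sigma)$ by $(X\cap H,\Sigma\cap H)$ with all hypotheses intact and with $\mathcal M_l$ replaced by its restriction, and the conormal exact sequence (\ref{equation conormal cutdown}), combined with the dimension bound of Theorem~\ref{theorem Du Bois properties}\ref{theorem property dim support Du Bois} for $\underline\Omega_\Sigma^\bullet$, propagates the estimate for $X\cap H$ back up to $X$. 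Putting the two contributions together gives $\dim\mathrm{Supp}~R^j\pi_*\mathscr O_{\widetilde X}\le n-j-2$ for $1\le j\le k$.

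The hard part is precisely the $\Sigma$-supported pieces: in general $\mathrm{gr}_0^F\mathrm{DR}(\mathcal M_l)\neq0$ --- for instance when a fibre of $\pi$ carries cohomology of non-Tate type --- so $R^j\pi_*\mathscr O_{\widetilde X}$ is genuinely not controlled by $\mathcal{IC}_X$ alone, and one must run the induction on $\dim\Sigma$ via general hyperplane sections and check that the restricted intersection and auxiliary Hodge modules sit compatibly in (\ref{equation conormal cutdown}), exactly the mechanism already used in the proofs of Theorems~\ref{theorem holomorphic extension intro} and \ref{theorem log forms to holomorphic forms}. A secondary point of care is keeping the cohomological degree and the Hodge-filtration index aligned through (\ref{eqution duality pure}), so that the output range comes out precisely as $1\le j\le k$ rather than something larger.
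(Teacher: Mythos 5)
Your reduction via Saito's decomposition is reasonable in spirit, but the central step for the $\mathrm{IC}$-summand --- ``dualizing'' the support conditions (\ref{equation holomorphic support condition}) for $0\le p\le k+1$ into the bound $\dim\mathrm{Supp}\,\mathscr H^{j-n}\mathrm{gr}_0^F\mathrm{DR}(\mathcal{IC}_X)\le n-j-2$ --- does not work. By (\ref{eqution duality pure}) the Grothendieck dual of $\mathrm{gr}_0^F\mathrm{DR}(\mathcal{IC}_X)$ is $\mathrm{gr}_{-n}^F\mathrm{DR}(\mathcal{IC}_X)\cong\pi_*\omega_{\widetilde X}$, a single sheaf in degree $0$, and the corresponding input estimate (the $p=0$ case of (\ref{equation holomorphic support condition})) is vacuous; the nontrivial estimates you establish concern $\mathrm{gr}_{-(n-p)}^F$ for $1\le p\le k+1$, whose duals are $\mathrm{gr}_{-p}^F$, not $\mathrm{gr}_0^F$. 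More fundamentally, support bounds on the cohomology sheaves of a complex do not transfer to its dual: by Proposition \ref{proposition kebekus sheaf criterion}, the bound you want on $\mathscr H^{\bullet}\mathbb D_Y(\pi_*\omega_{\widetilde X})$ is (a partial form of) the statement that sections of $\pi_*\omega_{\widetilde X}$ extend in codimension two --- precisely the property that is \emph{not} available under the hypotheses when $\Sigma$ is positive-dimensional, since Theorem \ref{theorem holomorphic extension criterion k Du Bois} only yields reflexivity of $\pi_*\Omega_{\widetilde X}^{\mathrm{codim}_X(\Sigma)}$, not of $\pi_*\omega_{\widetilde X}$. A secondary gap: in your base case the skyscraper summands $\mathrm{gr}_0^F\mathrm{DR}(\mathcal M_l)$ do not vanish in degree $j=n-1$ ``for range reasons''; that vanishing is exactly the content of the reflexivity of $\pi_*\omega_{\widetilde X}$ in the isolated case. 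You also invoke an unproved ``extension to higher degrees'' of Theorem \ref{theorem holomorphic extension criterion k Du Bois} for $\mathrm{codim}_X(\Sigma)<p\le k+1$.

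The paper's proof avoids Saito's decomposition entirely and is much shorter. For isolated singularities, the hypothesis $k\ge n-1$ puts you in the setting of Theorem \ref{theorem holomorphic extension criterion k Du Bois}, which gives $\pi_*\omega_{\widetilde X}$ reflexive; combining Grauert--Riemenschneider vanishing, Grothendieck duality $R^{j}\pi_*\mathscr O_{\widetilde X}\cong R^{j-n}\mathscr Hom_{\mathscr O_Y}(\pi_*\omega_{\widetilde X},\omega_Y^\bullet)$, and the equivalence of Proposition \ref{proposition kebekus sheaf criterion} then yields (\ref{equation dim support rational complex}) directly. In the general case one inducts on dimension using a general hyperplane section $H$, applied to the whole complex $\mathbf R\pi_*\mathscr O_{\widetilde X}$ at once via the base-change isomorphism $\mathscr O_H\otimes^{\mathbb L}\mathbf R\pi_*\mathscr O_{\widetilde X}\cong\mathbf R(\pi|_H)_*\mathscr O_{\widetilde{X\cap H}}$, together with the facts that $X\cap H$ inherits the weakly $m$-Du Bois property and the reflexivity of the sheaves $\mathscr H^0\underline\Omega^p$; the only case the induction misses is when the support is zero-dimensional with $j=n-1$, which is exactly the isolated base case already treated. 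If you want to keep your decomposition-based outline, you would need to replace the dualization step by this direct argument for $\pi_*\omega_{\widetilde X}$ in the isolated case and let the hyperplane induction carry everything else.
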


\begin{proof}
For isolated singularities, the assumption on $k$ implies $\pi_*\omega_{\widetilde X}$ is reflexive by Theorem \ref{theorem holomorphic extension criterion k Du Bois}.  Therefore the claim (\ref{equation dim support rational complex}) holds by Proposition \ref{proposition kebekus sheaf criterion} and Grauert-Riemenschneider vanishing.  More generally, let $X \subset Y$ be a locally closed embedding into a smooth manifold $Y$ and let $H$ be a general hyperplane section of $Y$.  By Theorem \ref{theorem Du Bois properties}\ref{theorem Du Bois base point free}, $X\cap H$ has weakly $m$-Du Bois singularities, and it is clear that $\mathscr H^0\underline \Omega_{X\cap H}^p$ is reflexive whenever $\mathscr H^0\underline \Omega_X^p$ is reflexive.  Therefore induction and the isomorphism $$\mathscr O_H\otimes \mathbf R\pi_*\mathscr O_{\tilde X} \cong \mathbf R(\pi|_H)_*\mathscr O_{\widetilde{X\cap H}}[1],$$ where $\pi|_H$ is the induced resolution of singularities of $X\cap H$ from $\pi:\widetilde X \to X$, imply the claim except in the case $j = n-1$; but this is only relevant when $X$ is $(n-1)$-Du Bois and so follows from Theorem \ref{theorem holomorphic extension criterion k Du Bois}.     
\end{proof}  

\subsection{A Remark on the Functorial Pullback Morphism}\label{subsection functorial pullback}

Corollary \ref{corollary dim support condition k Du Bois rational} is optimal for weakly $m$-Du Bois singularities.  If $X$ is Cohen-Macaulay, then $\mathscr H^0\underline \Omega_X^p$ is reflexive for $p \le \mathrm{codim}_X(\Sigma) - 2$.  This follows since:

\begin{itemize}
    \item $R^p\pi_*\mathscr O_{\widetilde X} = 0$ for $p \le \mathrm{codim}_X(\Sigma) - 2$ \cite[Lemma 3.3]{kovacs1999rational}, and

    \item The holomorphic extension property holds for $p \le \mathrm{codim}_X(\Sigma) - 2$ by Flenner's criterion.
\end{itemize} The inclusion $\mathscr H^0\underline \Omega_X^p \hookrightarrow \pi_*\Omega_{\widetilde X}^p$ is therefore an isomorphism due to an idea of Kebekus-Schnell on the existence of functorial pullback morphisms.

Recall that if $f:Z \to X$ is a morphism of complex spaces, there is a functorial pullback morphism $f^*\Omega_X^p \to \Omega_Z^p$ between the sheaves of K\"ahler $p$-forms.  In general, this pullback does not extend to the sheaves of reflexive differentials.  Work of Kebekus \cite{kebekuspullback} describes a process of constructing a natural reflexive pullback morphism which agrees with the K\"ahler pullback morphism on the regular locus.  The pullback morphism was originally constructed for morphisms of algebraic varieties with at worst klt singularities but was extended to arbitrary complex spaces with rational singularities \cite[\S 14]{kebekus2021extending}.  There are two major inputs for a pullback morphism $f^*\Omega_X^{[p]} \to \Omega_Z^{[p]}$ to exist in degree $p$, which hold for rational singularities:

\begin{itemize}
    \item The varieties $X$ and $Z$ satisfy the holomorphic extension property in degree $p$ \cite[Corollary 1.8]{kebekus2021extending}

    \item If $E_t = \pi^{-1}(t)$ is the fiber of a resolution of singularities $\pi:\tilde X \to X$, then $H^0(E_t, \Omega_{E_t}^p/\mathrm{tor}) = 0$ \cite[Lemma 1.2]{namikawa2000deformation}.
\end{itemize}

Recall from \S\ref{subsection isolated singularities and depth} that the second condition holds for isolated weakly $m$-Du Bois singularities whenever $\mathscr H^0\underline \Omega_X^p$ is reflexive for $p \le k$.  The first condition also holds in degree $p  =k + 1$ by Theorem \ref{theorem holomorphic extension criterion k Du Bois}.  Since $\mathscr H^0\underline \Omega_X^p$ agrees with the sheafification of $\Omega_X^p$ in the $h$-topology \cite{huber14}, $\mathscr H^0\underline \Omega_X^p$ is reflexive whenever these conditions for functorial pullback hold.  The following corollary of Theorem \ref{theorem holomorphic extension criterion k Du Bois} is therefore immediate.

\begin{corollary} \label{corollary extension criterion divisor}
Let $X$ be a normal complex variety with singular locus $\Sigma$, and let $\pi:\widetilde X \to X$ be a log-resolution of singularities.  Suppose $X$ is weakly $m$-Du Bois for $k \ge \mathrm{codim}_X(\Sigma) - 1$ and that $\mathscr H^0\underline \Omega_X^p$ is reflexive for some $p \le k$.  Then $\mathscr H^0\underline \Omega_X^{p+1}$ is reflexive if and only if the fibers $H^0(E_t, \Omega_{E_t}^{p+1}/\mathrm{tor}) =0$.
\end{corollary}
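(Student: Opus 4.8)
The plan is to deduce the statement, as the paper's narrative suggests, from the Kebekus--Schnell criterion for a functorial reflexive pullback. Write $q=p+1$. By \cite{huber14} the sheaf $\mathscr H^0\underline\Omega_X^q$ is the $h$-sheafification $\Omega_{X,h}^q$, which on a normal variety is a torsion-free subsheaf of the reflexive sheaf $\Omega_X^{[q]}$ coinciding with it over $X_{\mathrm{reg}}$; hence $\mathscr H^0\underline\Omega_X^q$ is reflexive if and only if $\mathscr H^0\underline\Omega_X^q=\Omega_X^{[q]}$, equivalently if and only if the functorial pullback $\pi^{[*]}\Omega_X^{[q]}\to\Omega_{\widetilde X}^q$ exists. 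By \cite[\S 14]{kebekus2021extending} and \cite{kebekuspullback}, the latter is governed by exactly two conditions, both recalled in \S\ref{subsection functorial pullback}: (a) holomorphic extension holds in degree $q$ on $X$, and (b) $H^0(E_t,\Omega_{E_t}^q/\mathrm{tor})=0$ for every fiber $E_t=\pi^{-1}(t)$, $t\in\Sigma$ \cite[Lemma 1.2]{namikawa2000deformation}. So the corollary reduces to showing that condition (a) is automatic under the hypotheses, whereupon reflexivity of $\mathscr H^0\underline\Omega_X^q$ becomes equivalent to (b).

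To verify (a) I would first note that $X$, being weakly $0$-Du Bois and normal, satisfies $\mathscr O_X=\mathscr H^0\underline\Omega_X^0\xrightarrow{\;\sim\;}\underline\Omega_X^0$, so $X$ is Du Bois. If $q<\mathrm{codim}_X(\Sigma)$, holomorphic extension in degree $q$ then follows from Theorem \ref{theorem holomorphic extension intro}. If $q=\mathrm{codim}_X(\Sigma)$, then necessarily $p=\mathrm{codim}_X(\Sigma)-1\le k$, so $X$ is weakly $(\mathrm{codim}_X(\Sigma)-1)$-Du Bois with $\mathscr H^0\underline\Omega_X^p$ reflexive, and Theorem \ref{theorem holomorphic extension criterion k Du Bois} gives that $\pi_*\Omega_{\widetilde X}^q$ is reflexive; since a reflexive subsheaf of $\Omega_X^{[q]}$ agreeing with it outside the codimension-$\ge 2$ locus $\Sigma$ must equal it, holomorphic extension holds in degree $q$. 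For $q>\mathrm{codim}_X(\Sigma)$ the fiber condition (b) is automatic for dimension reasons after cutting $X$ down by $\dim\Sigma$ general hyperplanes to isolated singularities, and (a) is obtained by the same lexicographic induction on $(n,\dim\Sigma)$ as in the proof of Theorem \ref{theorem holomorphic extension criterion k Du Bois}. With (a) in hand the existence of the functorial pullback --- equivalently, the reflexivity of $\mathscr H^0\underline\Omega_X^q$ --- becomes equivalent to (b), which is the assertion of the corollary.

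For the implication that reflexivity forces (b), or as an alternative to invoking the pullback machinery, one can argue directly via the exceptional-divisor triangle of Theorem \ref{theorem Du Bois properties}\ref{theorem right triangle resolution exceptional divisor}. Reducing to isolated singularities by general hyperplane sections --- which preserve the weakly $m$-Du Bois property \cite[Theorem A]{shen2023k}, the reflexivity of $\mathscr H^0\underline\Omega_X^p$, holomorphic extension in degree $q$, and the fiber condition --- one has $\underline\Omega_\Sigma^q=0$, and for $q\le k$ also $\mathscr H^1\underline\Omega_X^q=0$, so the triangle collapses to $0\to\mathscr H^0\underline\Omega_X^q\to\pi_*\Omega_{\widetilde X}^q\to\pi_*\underline\Omega_E^q\to 0$; using (a) to identify $\pi_*\Omega_{\widetilde X}^q=\Omega_X^{[q]}$ gives $\pi_*\underline\Omega_E^q\cong\Omega_X^{[q]}/\mathscr H^0\underline\Omega_X^q$. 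Since $E$ is simple normal crossing, $\underline\Omega_E^q=\Omega_E^q/\mathrm{tor}$, and passing to stalks at each $t\in\Sigma$ and shrinking $X$ as in \S\ref{subsection isolated singularities and depth} identifies $\pi_*\underline\Omega_E^q=0$ with $H^0(E_t,\Omega_{E_t}^q/\mathrm{tor})=0$; thus $\mathscr H^0\underline\Omega_X^q=\Omega_X^{[q]}$ precisely when (b) holds.

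The main obstacle is establishing condition (a) in the boundary degree $q=\mathrm{codim}_X(\Sigma)$: for smaller $q$ it is unconditional, but here it rests entirely on Theorem \ref{theorem holomorphic extension criterion k Du Bois}, and this is the only place the hypothesis that $\mathscr H^0\underline\Omega_X^p$ be reflexive is used. A secondary technical point is to confirm that the hyperplane-section reduction transports the fiber condition $H^0(E_t,\Omega_{E_t}^q/\mathrm{tor})=0$ and its negation faithfully, and that reflexivity of $\mathscr H^0\underline\Omega_X^q$ descends to --- and ascends from --- a general section, so that the non-isolated case reduces cleanly to the isolated one.
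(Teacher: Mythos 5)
Your proposal is correct and takes essentially the same route as the paper: the paper derives this corollary in one line from the discussion in \S\ref{subsection functorial pullback} --- reflexivity of $\mathscr H^0\underline\Omega_X^{p+1}$ is read off from the two Kebekus--Schnell conditions for functorial pullback via the $h$-differential description, with holomorphic extension in degree $p+1$ supplied by Theorem \ref{theorem holomorphic extension criterion k Du Bois} and the fiber condition handled by the exact sequence of \S\ref{subsection isolated singularities and depth}. Your write-up is a faithful (and somewhat more careful) reconstruction of exactly that argument, including the converse direction via the exceptional-divisor triangle that the paper leaves implicit.
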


\subsection{A Remark on the Reflexivity of $\Omega_X^p$} \label{section Kahler forms} Let $X$ be a normal complex algebraic variety.  We wish to consider the $m$-Du Bois and $m$-rational definitions defined in \cite{mustata2021bois}, \cite{jung2021higher}, \cite{mustata2021hodge}, \cite{friedman2022deformations}, and \cite{friedman2022higher}.  These papers consider lci singularities, in which case the $m$-Du Bois properties implies $\Omega_X^p$ is reflexive for every $p$; this is particularly special for lci singularities and restricts the codimension of the singular locus.  For instance, if $Y$ is a variety with lci singularities, $\Omega_Y^1$ is reflexive if and only if $Y$ is smooth in codimension 3 \cite{kunz1986kahler}.

Understanding what happens when $\Omega_X^p$ is reflexive seems to be a difficult problem and rather restrictive.  One reason for this is that the minimal generating sets of the sheaves $\Omega_X^k$ are related to the embedding dimension of $X$; in particular, a minimal generating set of the $\mathscr O_{X,x}$-module $\Omega_{X,x}^k$ has $\binom{e_x}{k}$ generators, where $e_x$ is the embedding dimension of the singularity $(X,x)$ \cite[\S 4]{graf2015generalized}.  In many cases, the reflexivity of $\Omega_X^k$ necessarily restricts the embedding dimension.  The reflexivity of the sheaf of K\"ahler differentials seems to be particularly special: the only known example of a singular variety with $\Omega_X^p$ reflexive for some $p \ge 1$ are locally complete intersections.  Even in this case, the higher K\"ahler $p$-forms will contain torsion and cotorsion \cite[Theorem 1.11]{graf2015generalized}. Beyond this, little is known about the reflexivity of the sheaves of K\"ahler $p$-forms in general.  We give one new result in this direction:

\begin{proposition}
If $X$ is an $n$-fold Gorenstein variety with at worst quotient singularities, then $\Omega_X^p$ is reflexive for some $1 \le p \le n$ if and only if $X$ is smooth.
\end{proposition}

\begin{proof}
We use a description of the sheaves $\pi_*\Omega_{\widetilde X}^p$ used in \cite[\S 10]{kebekus2021extending}.  Let $X \subset Y$ be an embedding of $X$ into a smooth complex manifold $X$ of codimension $c$.  Let $\mathcal{IC}_X$ be the $\mathscr D_Y$-module associated to the intersection Hodge module.  The graded components of the de Rham complex with respect to the Hodge filtration are of the form $$\mathrm{gr}_{-p}^F\mathrm{DR}(\mathcal{IC}_X) = [\Omega_Y^{p+c}\otimes F_c\mathcal{IC}_X \xrightarrow{\nabla} \Omega_Y^{p+1+c}\otimes \mathrm{gr}_{c+1}^F\mathcal{IC}_X \xrightarrow{\nabla}... \xrightarrow{\nabla} \Omega_Y^{n+c}\otimes \mathrm{gr}_{n-p + c}^F\mathcal{IC}_X]$$ shifted by degree $-(n-p)$.  Therefore, we have $$\Omega_X^{[p]} \cong \mathrm{ker}(\Omega_Y^{p+c}\otimes F_c\mathcal{IC}_X \xrightarrow{\nabla} \Omega_Y^{p+1+c}\otimes \mathrm{gr}_{c+1}^F\mathcal{IC}_X)$$ whenever $X$ has rational singularities by (\ref{equation holomorphic forms intersection complex}). 

Now we use the assumption $X$ has at worst quotient singularities.  Since $X$ is a rational homology manifold, the natural morphisms $$\underline \Omega_X^p \to \mathrm{gr}_{-p}^F\mathrm{DR}(\mathcal{IC}_X)[p-n] \to \mathbb D_X(\underline \Omega_X^{n-p})$$ are quasi-isomorphisms for every $p$; in fact, there is an isomorphism $$\mathbb Q_X[n] \xrightarrow{\sim} IC_X$$ of \textit{perverse} sheaves, and $\mathrm{gr}_{k}^F\mathcal{IC}_X = 0$ for $k \ge c + 1$.  The above description of $\mathrm{DR}(\mathcal{IC}_X)$ implies \begin{equation}\label{equation reflexive isomorphism smooth embedding}
    \Omega_X^{[p]} \cong \Omega_Y^{p+c}\otimes F_c\mathcal{IC}_X.
\end{equation} If we assume $\Omega_X^p$ is reflexive, then $$\Omega_X^p \cong \Omega_Y^{p+c}\otimes F_c\mathcal{IC}_X.$$  By Claim 10.2 and the identification (\ref{equation reflexive isomorphism smooth embedding}), this isomorphism is closed under wedging with K\"ahler forms on $Y$.  Specifically, wedging $\Omega_Y^{p+c}\otimes F_c\mathcal{IC}_X$ with K\"ahler $(n-p)$ forms on $Y$ lands in $\Omega_Y^{n+c}\otimes F_c\mathcal{IC}_X \cong \omega_X$.  By restricting to $X$, we see that $$\Omega_X^n = \Omega_X^p \wedge \Omega_X^{n-p} \subset \omega_X.$$  This implies $\Omega_X^n$ is torsion-free.  On the other hand, since we assume $\omega_X$ is a line bundle, $\Omega_X^n \to \omega_X$ is surjective.  Since $\omega_X = (\Omega_X^n)^{**}$, this implies $\Omega_X^n \xrightarrow{\sim} \omega_X$, and so $\Omega_X^n$ is locally free.  This means $X$ is smooth.
\end{proof}

\section{Hodge Theory of Proper Varieties with Weakly $m$-rational Singularities}

\subsection{Mixed Hodge Theory for MMP Singularities} Let $X$ be a complex algebraic variety.  In his thesis, Deligne defined the notion of a mixed Hodge structure and constructed a canonical mixed Hodge structure on the cohomology of any complex algebraic variety \cite{deligne1971theorie}, \cite{deligne1974theorie}.  Specfically, there is an increasing weight filtration $W_\bullet$ on $H^k(X, \mathbb Q)$ and a decreasing filtration $F^\bullet$ on $H^k(X, \mathbb C)$ which descends to a Hodge filtration on $\mathrm{gr}_p^WH^k(X, \mathbb Q)\otimes \mathbb C$ for each $k$.  The Du Bois complex is a byproduct of this construction: namely, Du Bois details in \cite{du1981complexe} that Deligne's construction produces a complex $\underline \Omega_X^\bullet$ for any algebraic variety $X$ which generates the Hodge filtration under the spectral sequence  $$E_1^{p,q} = \mathbb H^q(X, \underline \Omega_X^p) \Rightarrow H^{p+q}(X, \mathbb C)$$ when $X$ is proper.

It is often useful to understand when the mixed Hodge structure on $H^k(X, \mathbb Q)$ is pure when studying the global moduli of singularities.  For low degree, this is usually understood by looking at the pullback morphism $\pi^*:\pi^*H^k(X, \mathbb Q) \to H^k(\widetilde X, \mathbb Q)$ associated to a resolution of singularities.  By Leray, this map is injective if $k = 1$ when $X$ is normal.  For $k = 2$, this map is again injective by Leray when $X$ has rational singularities.

The obvious question to ask is how does the (weakly) $m$-Du Bois property affect the mixed Hodge theory on the cohomology $H^*(X, \mathbb Q)$ of a projective variety.  Surprisingly, the vanishing cohomology of the $\underline \Omega_X^p$ has little control over the weight filtration:

\begin{example}
    \begin{enumerate}
        \item Let $X$ be a projective curve.  For dimension reasons, $X$ is weakly $1$-Du Bois, but $H^1(X, \mathbb Q)$ will not carry a pure Hodge structure.  For an explicit example, consider the nodal elliptic curve.

        \item Let $X$ be a projective hypersurface of dimension 3 with ordinary double points.  Note that $X$ has rational singularities and is $1$-Du Bois by \cite[Corollary 1.9]{friedman2022higher}.  Therefore, $H^2(X, \mathbb Q)$ carries a pure Hodge structure by the preceding discussion, but $H^3(X, \mathbb Q)$ need not carry a pure Hodge structure

        \item Let $Y$ be a projective hyperk\"ahler 4-fold manifold.  Suppose there exists a birational contraction $\phi:Y \to X$ of a Lagrangian submanifold $L \cong \mathbb P^2 \subset Y$ to a point.  Then $X$ is \textit{not} weakly $1$-rational \cite[Proposition 1.4]{tighe2024symmetries}, but $H^k(X, \mathbb Q)$ carries a pure Hodge structure for each $k$.
    \end{enumerate}
\end{example}

Moreover, $H^k(X, \mathbb Q)$ will carry a pure Hodge structure for large $k$ for topological reasons: if $X$ is projective and has isolated singularities, then $H^k(X, \mathbb Q)$ carries a pure Hodge structure of weight $k$ for $k > n$.  What is interesting in this case is that dual $H^k(X, \mathbb Q(-n))^*$ carries a pure Hodge structure of weight $2n-k$.  By Poincar\'e duality, this group is $H^{2n-k}(X_{\mathrm{reg}}, \mathbb Q)$, and $H^{2n-k}(X, \mathbb Q)$ carries a pure Hodge structure if and only if $H^{2n-k}(X, \mathbb Q) \to H^{2n-k}(X_{\mathrm{reg}}, \mathbb Q)$ is injective.

More generally, we can consider the weight filtrations for the mixed Hodge structures on $H^k(X, \mathbb Q)$ and $H^{2n-k}(X, \mathbb Q(-n))^*$, respectively.  Since $X$ is proper, the weight filtration $W_\bullet$ truncates to $H^k(X, \mathbb Q)$ for each $k$.  The weight filtration on the dual $H^{2n-k}(X, \mathbb Q(-n))^*$ therefore is supported in higher weights (compare this to the cohomology $H^k(X_{\mathrm{reg}}, \mathbb Q)$), and these groups are related to the Grothendieck duals $\mathbb D_X(\underline \Omega_X^{n-p})$ of the Du Bois complex.  It therefore seems better to consider how the $m$-rational property affects the weight filtration.

\subsection{Purity for $m$-rational Singularities}

\begin{theorem} \label{theorem k-rational hodge}
If $X$ is a normal and projective variety of dimension $n$ with weakly $m$-rational singularities, then the canonical mixed Hodge structure on $H^m(X, \mathbb Q)$ is pure of weight $m$.
\end{theorem}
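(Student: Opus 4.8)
The plan is to compare $H^m(X,\mathbb Q)$ with the intersection cohomology $IH^m(X,\mathbb Q)$ and then to run the hyperplane-section induction used earlier in the paper. First note that $X$ has rational singularities: taking $p=0$ in the definition of weakly $m$-rational and using $\mathbb D_X(\underline\Omega_X^n)\cong\mathbf R\pi_*\mathscr O_{\widetilde X}$ gives $R^j\pi_*\mathscr O_{\widetilde X}=0$ for $j>0$. In particular $X$ is Du Bois, $\underline\Omega_X^0\cong\mathscr O_X$, and $\mathscr H^0\underline\Omega_X^p\cong\Omega_X^{[p]}$ is reflexive, hence torsion free, for every $p$. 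Since $X$ is proper, $W_mH^m(X,\mathbb Q)=H^m(X,\mathbb Q)$, so purity of weight $m$ amounts to $W_{m-1}H^m(X,\mathbb Q)=0$, equivalently to the injectivity of $\pi^*\colon H^m(X,\mathbb Q)\to H^m(\widetilde X,\mathbb Q)$ for a resolution $\pi$ (Deligne). As $\pi^*$ factors through $IH^m(X,\mathbb Q)$, which by the decomposition theorem is a direct summand of $H^m(\widetilde X,\mathbb Q)$ and which is pure of weight $m$ because $X$ is proper and $\mathrm{IC}_X$ is a pure Hodge module, it is enough to prove that the canonical map $H^m(X,\mathbb Q)\to IH^m(X,\mathbb Q)$ is injective: a sub-mixed Hodge structure of a pure one is pure.

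I would reduce to the case $\dim X=m$ by induction on $\dim X$. If $\dim X>m$, let $H\subset X$ be a general hyperplane section; then $X\cap H$ is normal of dimension $\dim X-1$ and still weakly $m$-rational by Theorem \ref{theorem Du Bois properties}\ref{theorem Du Bois base point free} and \cite[Corollary 3.3]{shen2023k}, while the Lefschetz hyperplane theorem gives an injection $H^m(X,\mathbb Q)\hookrightarrow H^m(X\cap H,\mathbb Q)$ of mixed Hodge structures. By the inductive hypothesis $H^m(X\cap H,\mathbb Q)$ is pure of weight $m$, hence so is its sub-Hodge structure $H^m(X,\mathbb Q)$.

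So assume $\dim X=m=n$. The map $H^n(X)\to IH^n(X)$ is induced by the canonical morphism $\mathbb Q_X^H[n]\to IC_X^H$ in $\mathrm D^b\mathrm{MHM}(X)$ (recall $\mathscr H^{-n}(IC_X)\cong\mathbb Q_X$ as $X$ is normal); let $C$ be its cone, supported on $\Sigma$. Since $X$ is proper and $C\in\mathrm D^b\mathrm{MHM}(X)$, the Hodge--de Rham spectral sequence for $C$ degenerates, and, writing $G^p:=\mathrm{gr}^F_{-p}\mathrm{DR}(\mathcal{IC}_X)[p-n]$ and $Q^p:=\mathrm{Cone}(\underline\Omega_X^p\to G^p)$, the identifications $\mathrm{gr}^F_{-p}\mathrm{DR}(\mathbb Q_X^H[n])\simeq\underline\Omega_X^p[n-p]$ and $\mathrm{gr}^F_{-p}\mathrm{DR}(IC_X^H)\simeq G^p[n-p]$ reduce the vanishing $\mathbb H^{-1}(X,C)=0$ — which by the long exact sequence forces the injectivity of $H^n(X)\to IH^n(X)$ — to $\mathbb H^{n-1-p}(X,Q^p)=0$ for every $0\le p\le n$. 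Here $Q^p$ is supported on $\Sigma$, $\mathscr H^0(G^p)\cong\pi_*\Omega_{\widetilde X}^p$ by \ref{equation holomorphic forms intersection complex}, $G^p$ lies in non-negative degrees (the support condition for $\mathrm{IC}_X$), and $G^p$ is a direct summand of $\mathbf R\pi_*\Omega_{\widetilde X}^p$ by Saito's decomposition \ref{equation saito decomposition}. The extreme cases are immediate: $G^0$ is a summand of $\mathbf R\pi_*\mathscr O_{\widetilde X}=\mathscr O_X$, so $Q^0=0$; and $\underline\Omega_X^n\cong\pi_*\omega_{\widetilde X}\cong G^n$ by Theorem \ref{theorem Du Bois properties}\ref{theorem right triangle resolution exceptional divisor}, so $Q^n=0$. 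For $1\le p\le n-1$, apply Grothendieck--Serre duality on the proper variety $X$: $\mathbb H^{n-1-p}(X,Q^p)^\vee\cong\mathbb H^{p+1-n}(X,\mathbb D_XQ^p)$ with $\mathbb D_XQ^p\simeq\mathrm{Cone}(G^{n-p}\to\mathbb D_X\underline\Omega_X^p)[-1]$, using the self-duality $\mathbb D_XG^p\simeq G^{n-p}$ of the weight-$n$ pure Hodge module $\mathrm{IC}_X$ (the duality \ref{eqution duality pure}). Since $\dim X=m$, the weakly $m$-rational hypothesis makes $\mathbb D_X\underline\Omega_X^p=\mathbb D_X\underline\Omega_X^{n-(n-p)}$ a single sheaf concentrated in cohomological degree $0$; as $G^{n-p}$ and $\mathbb D_X\underline\Omega_X^p$ agree on $X_{\mathrm{reg}}$ and $\mathscr H^0(G^{n-p})=\pi_*\Omega_{\widetilde X}^{n-p}$ is torsion free, the map $G^{n-p}\to\mathbb D_X\underline\Omega_X^p$ is injective on $\mathscr H^0$, so $\mathrm{Cone}(G^{n-p}\to\mathbb D_X\underline\Omega_X^p)$ has no cohomology in negative degrees, $\mathbb D_XQ^p$ lies in degrees $\ge 1$, and $\mathbb H^{p+1-n}(X,\mathbb D_XQ^p)=0$ because $p+1-n\le 0$.

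The delicate point — and the step I expect to be the main obstacle — is the amplitude bookkeeping in the last paragraph: one must verify that under the weakly $m$-rational hypothesis the complex $\mathbb D_XQ^p$ genuinely sits in cohomological degrees $>p+1-n$, which means keeping the several shift conventions (for $\mathrm{DR}$, $\mathrm{gr}^F$, $\mathbb D_X$, and the identification $\mathbb D_XG^p\simeq G^{n-p}$) rigorously consistent, together with the precise compatibility of $\mathrm{gr}^F\mathrm{DR}$ with $\mathbb D_X$ for the mixed — not merely pure — module underlying $C$. One should also confirm that the Hodge--de Rham spectral sequence degenerates for the complex $C\in\mathrm D^b\mathrm{MHM}(X)$, rather than only for a single Hodge module; both facts are part of Saito's theory but merit explicit statements, and it is exactly here that, outside the base case $\dim X=m$, the hypothesis alone does not suffice and the hyperplane induction is needed.
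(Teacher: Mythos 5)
Your argument is a genuinely different route from the paper's: you go through intersection cohomology, a weak Lefschetz induction, and a cone in $\mathrm{D}^b\mathrm{MHM}$, whereas the paper never mentions $IH^\bullet$ at all. Its proof uses the identification $\Omega_X^{[p]} \cong \underline\Omega_X^p \cong \mathbb D_X(\underline\Omega_X^{n-p})$ from \cite{shen2023k} to observe that the single collection of $E_1$-terms $\mathbb H^q(X,\underline\Omega_X^p)$ computes the Hodge filtration both on $H^m(X,\mathbb C)$, whose weights are $\le m$ because $X$ is proper, and, via Serre duality, on the dual mixed Hodge structure of $H^{2n-m}(X,\mathbb C)$, whose weights are $\ge m$; purity follows with no induction and no case analysis. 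Your two reductions --- purity follows from injectivity of $H^m(X,\mathbb Q)\to IH^m(X,\mathbb Q)$, and the Goresky--MacPherson Lefschetz theorem reduces to $\dim X = m$ --- are sound (modulo the edge case $m>\dim X$, which your induction does not reach), and the identification of $\mathrm{gr}^F_{-p}\mathrm{DR}$ of the cone with $Q^p[n-p]$ together with Hodge--de Rham degeneration is correct.

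The gap is in the base case, and it is exactly where you predicted. First, the two assertions ``$\mathbb H^{n-1-p}(X,Q^p)^\vee\cong\mathbb H^{p+1-n}(X,\mathbb D_XQ^p)$'' and ``$\mathbb D_XQ^p$ lies in degrees $\ge 1$'' use incompatible normalizations of $\mathbb D_X$: with the normalization for which $\mathbb D_XG^p\simeq G^{n-p}$ and $\mathbb D_X\underline\Omega_X^p$ is a sheaf in degree $0$ (the one the definition of weakly $m$-rational forces), Serre duality on the $n$-dimensional proper $X$ reads $\mathbb H^{n-1-p}(X,Q^p)^\vee\cong\mathbb H^{p+1}(X,\mathbb D_XQ^p)$, and $\mathbb D_XQ^p\in \mathrm{D}^{\ge 1}$ only kills $\mathbb H^{\le 0}$. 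The missing $p$ degrees can be recovered at the top of the amplitude: the support condition for $\mathrm{gr}^F\mathrm{DR}(\mathcal{IC}_X)$ places $\mathbb D_XQ^p$ in $\mathrm{D}^{[1,p]}$, and since it is supported on $\Sigma$ one gets $\mathbb H^{p+1}(X,\mathbb D_XQ^p)=0$ precisely when $\dim\Sigma=0$. But when $\dim X=m$ and $\Sigma$ is positive-dimensional --- a situation your hyperplane induction cannot remove, since cutting again destroys the Lefschetz injection on $H^m$ --- the bound falls short by $\dim\Sigma$. Worse, the long exact sequences of the triangles $\underline\Omega_X^p\to G^p\to Q^p$ identify the desired vanishing $\mathbb H^{n-1-p}(X,Q^p)=0$ with the surjectivity of $\mathrm{gr}_F H^{n-1}(X)\to\mathrm{gr}_F IH^{n-1}(X)$ together with the injectivity of $\mathrm{gr}_F H^{n}(X)\to\mathrm{gr}_F IH^{n}(X)$, i.e.\ with (slightly more than) the statement being proved; so no purely formal amplitude bookkeeping can supply it, and some genuine use of the weakly $m$-rational hypothesis --- such as the duality $\underline\Omega_X^p\cong\mathbb D_X(\underline\Omega_X^{n-p})$ pitting weights $\le m$ against weights $\ge m$, as in the paper --- is required to close the case $\dim\Sigma>0$.
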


\begin{proof}
By \cite[Theorem B]{shen2023k}, we have $$\Omega_X^{[p]} \cong_{\mathrm{qis}} \underline \Omega_X^p \cong \mathbb D_X(\underline \Omega_X^{n-p})$$ for every $0 \le p \le m$.

On the one hand, the spectral sequence $$E_1^{p,q} = \mathbb H^q(X, \underline \Omega_X^p) \Rightarrow H^{p+q}(X, \mathbb C)$$ generates the Hodge filtration.  Note that the weight filtration $W_{\mathbb C}^\bullet$ is supported in weight $\le n$, since $X$ is proper.

On the other hand, $\mathbb H^q(X, \mathbb D_X(\underline \Omega_X^{n-p})) \cong \mathrm{Hom}_{\mathscr O_{\mathrm{pt}}}(\mathbb H^{n-q}(X,\underline \Omega_X^{n-p}), \mathscr O_{\mathrm{pt}})$ by duality.  Therefore $E_1^{p,q}$ generates the Hodge filtration on the dual mixed Hodge structure $H^{n-m}(X, \mathbb C)^*$ for $m = p + q$.  The weight filtration on this mixed Hodge structure is supported in weight $\ge n$. Therefore, $H^m(X, \mathbb Q)$ must be a pure Hodge structure.  
\end{proof}

\begin{corollary}
If $X$ is a projective variety of dimension $n$ with weakly $m$-rational singularities, then there is a non-canonical decomposition $$H^m(X, \mathbb C) = \bigoplus_{p + q = m} H^q(X, \pi_*\Omega_{\tilde X}^p)$$ induced by the Hodge filtration.
\end{corollary}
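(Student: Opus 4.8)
The plan is to derive the decomposition from the purity of Theorem~\ref{theorem k-rational hodge}, combined with the $E_1$-degeneration of the Du Bois spectral sequence and the identifications $\underline\Omega_X^p \cong_{\mathrm{qis}} \pi_*\Omega_{\tilde X}^p$ that are available, under the weakly $m$-rational hypothesis, in the range $0 \le p \le m$.

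First, by Theorem~\ref{theorem k-rational hodge} the mixed Hodge structure on $H^m(X,\mathbb Q)$ is pure of weight $m$. For a pure Hodge structure of weight $m$ the Hodge filtration splits the complexification: $H^m(X,\mathbb C) = \bigoplus_{p+q=m} H^{p,q}$ with $H^{p,q} = F^pH^m(X,\mathbb C) \cap \overline{F^qH^m(X,\mathbb C)}$, and the composite $H^{p,q} \hookrightarrow F^pH^m(X,\mathbb C) \twoheadrightarrow \mathrm{gr}_F^pH^m(X,\mathbb C)$ is an isomorphism; thus $H^m(X,\mathbb C) \cong \bigoplus_{p+q=m} \mathrm{gr}_F^pH^m(X,\mathbb C)$. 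On the other hand, by the $E_1$-degeneration of the spectral sequence $E_1^{p,q} = \mathbb H^q(X,\underline\Omega_X^p) \Rightarrow H^{p+q}(X,\mathbb C)$, whose abutment filtration is the Hodge filtration (Theorem~\ref{theorem Du Bois properties}), one has $\mathrm{gr}_F^pH^m(X,\mathbb C) \cong \mathbb H^{m-p}(X,\underline\Omega_X^p)$ for every $p$.

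It then remains to rewrite $\mathbb H^{m-p}(X,\underline\Omega_X^p)$ for $0 \le p \le m$, which is the full range of indices occurring in the Hodge decomposition of a weight-$m$ structure. Exactly as in the proof of Theorem~\ref{theorem k-rational hodge}, \cite[Theorem B]{shen2023k} gives $\Omega_X^{[p]} \cong_{\mathrm{qis}} \underline\Omega_X^p \cong_{\mathrm{qis}} \mathbb D_X(\underline\Omega_X^{n-p})$ for $0 \le p \le m$; in particular $\underline\Omega_X^p$ is a single coherent sheaf placed in degree $0$, so $\mathbb H^{m-p}(X,\underline\Omega_X^p) = H^{m-p}(X,\Omega_X^{[p]})$. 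Since $X$ is normal and weakly $m$-rational it has rational singularities, for which holomorphic extension holds in every degree, hence $\pi_*\Omega_{\tilde X}^p \cong \Omega_X^{[p]}$ independently of the chosen resolution. Combining all of this, $$H^m(X,\mathbb C) = \bigoplus_{p+q=m} H^q(X,\pi_*\Omega_{\tilde X}^p),$$ the decomposition being the one transported from the Hodge decomposition of $H^m(X,\mathbb Q)$ through the above isomorphisms; it depends on the choice of splitting of the Hodge filtration, which is the source of the non-canonicity.

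I do not expect a genuine obstacle beyond Theorem~\ref{theorem k-rational hodge} itself --- the remaining steps are bookkeeping with the spectral sequence and with the standard comparison isomorphisms for rational singularities. The point worth emphasizing is that purity is used in an essential way: for an arbitrary proper variety, degeneration of the Du Bois spectral sequence together with the comparison $\underline\Omega_X^p \cong \pi_*\Omega_{\tilde X}^p$ (when it holds) only yields a \emph{filtration} on $H^m(X,\mathbb C)$ whose successive quotients are the groups $H^q(X,\pi_*\Omega_{\tilde X}^p)$; passing to an actual direct sum requires the weight filtration on $H^m$ to be concentrated in weight $m$, which is precisely what the weakly $m$-rational hypothesis supplies via Theorem~\ref{theorem k-rational hodge}.
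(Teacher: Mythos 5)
Your proof is correct and is exactly the argument the paper intends (the corollary is stated as an immediate consequence of Theorem \ref{theorem k-rational hodge}, with no separate proof): purity splits the Hodge filtration into its graded pieces, $E_1$-degeneration identifies $\mathrm{gr}_F^pH^m(X,\mathbb C)$ with $\mathbb H^{m-p}(X,\underline\Omega_X^p)$, and the weakly $m$-rational hypothesis --- via \cite[Theorem B]{shen2023k} together with rational singularities and holomorphic extension --- identifies these with $H^{m-p}(X,\pi_*\Omega_{\tilde X}^p)$ for $0\le p\le m$. Your closing remark correctly isolates where purity is essential, and there are no gaps.
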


We remark that Theorem \ref{theorem k-rational hodge} implies something stronger than the cohomology $H^m(X, \mathbb Q)$ carrying a pure Hodge structure.  For instance if $m < \mathrm{codim}_X(\Sigma)$, where $\Sigma$ is the singular locus, the above proof shows that $H^m(X, \mathbb Q) \to H^m(X_{\mathrm{reg}}, \mathbb Q)$ is an isomorphism.  If $X$ has rational singularities, this morphism is always injective for $m =2$.  If $X$ is a 3-fold with isolated singularities, then $H^m(X, \mathbb Q) \to H^m(X_{\mathrm{reg}}, \mathbb Q)$ is an isomorphism if and only if Poincar\'e duality holds: therefore the \textit{defect} $\sigma(X)$ of $X$ must be 0 \cite[p. 97]{kawamata88crepant}, and $X$ is $\mathbb Q$-factorial.  This extends to higher dimensions in special cases, see for example \cite[Proposition 2.17]{tighe2022llv}.

\bibliography{bib.bib}
\bibliographystyle{alpha}

\end{document}